
\documentclass{amsart}

\usepackage{amsthm,amssymb,latexsym,amsmath,color,mathrsfs,caption,enumitem}
\usepackage[all]{xy}
\usepackage{mathrsfs}
\usepackage{mathabx}
\usepackage[utf8]{inputenc}
\usepackage{multirow}
\usepackage[final,notref,notcite]{showkeys}
\usepackage{enumitem}
\usepackage{mathdots}
\usepackage{eucal}
\usepackage{marginnote}
\usepackage{hyperref}
\usepackage{dsfont}
\usepackage[right=3cm,left=3cm,top=2.2cm,bottom=2.2cm]{geometry}
\newcommand{\III}{{\mathds I}}
\newcommand{\VV}{{\mathds V}}
\newcommand{\Z}{{\mathds Z}}
\newcommand{\N}{{\mathds N}}
\newcommand{\C}{\mathds{C}}

\newcommand{\p}[1]{{\mathds{P}^{#1}}}
\newcommand{\pn}{{\mathds{P}^n}}

\newcommand{\op}[1]{{\mathcal O}_{\mathds{P}^{#1}}}
\newcommand{\opn}{{\mathcal O}_{\mathds{P}^n}}

\newcommand{\tpn}{{\rm T}{\mathds{P}^{n}}}
\newcommand{\tx}{{\rm T}{X}}
\newcommand{\txx}{{\rm T}_{\pn}\langle X\rangle}
\newcommand{\txprime}{{\rm T}'_{X}}

\newcommand{\SL}{\mathrm{SL}}

\def\redu{{\mathrm{red}}}
\def\tf{{\mathrm{tf}}}
\def\t{{\mathrm{t}}}
\def\diag{{\mathrm{diag}}}

\newcommand{\sing}{\operatorname{Sing}}
\newcommand{\supp}{\operatorname{Supp}}

\newcommand{\calc}{{\mathcal C}}
\newcommand{\cald}{{\mathcal D}}
\newcommand{\cale}{{\mathcal E}}
\newcommand{\calf}{{\mathcal F}}
\newcommand{\calg}{{\mathcal G}}

\newcommand{\cali}{{\mathcal I}}

\newcommand{\calk}{{\mathcal K}}
\newcommand{\call}{{\mathcal L}}
\newcommand{\calm}{{\mathcal M}}
\newcommand{\caln}{{\mathcal N}}
\newcommand{\calo}{{\mathcal O}}
\newcommand{\calp}{{\mathcal P}}
\newcommand{\calq}{{\mathcal Q}}
\newcommand{\calr}{{\mathcal R}}
\newcommand{\cals}{{\mathcal S}}
\newcommand{\calt}{{\mathcal T}}

\newcommand{\calv}{{\mathcal V}}

\newcommand{\inext}{{\mathcal E}{\it xt}}

\newcommand{\Ext}{\operatorname{Ext}}
\newcommand{\Hom}{\operatorname{Hom}}

\newcommand{\Tor}{\operatorname{\mathcal{T}or}}
\newcommand{\pdim}{\mathrm{pdim}}
\newcommand{\gpdim}{\mathrm{gpdim}}
\DeclareMathOperator{\End}{End}

\DeclareMathOperator{\coker}{coker}
\DeclareMathOperator{\GL}{GL}
\DeclareMathOperator{\im}{im}
\DeclareMathOperator{\codim}{{codim}}
\DeclareMathOperator{\depth}{{depth}}
\DeclareMathOperator{\rk}{{rk}}
\DeclareMathOperator{\cork}{{cork}}

\DeclareMathOperator{\lcm}{lcm}


\def\sD{{\mathscr{D}}}

\def\Ts{{\mathcal T}_{\sigma}}

\newcommand{\lra}{\longrightarrow}
\newcommand{\into}{\hookrightarrow}
\newcommand{\onto}{\twoheadrightarrow}

\newtheorem{theorem}{Theorem}[section]
\newtheorem{mthm}{Theorem}

\newtheorem{proposition}[theorem]{Proposition}

\newtheorem*{defn}{{\bf Definition}}

\newtheorem{lemma}[theorem]{Lemma}
\newtheorem{corollary}[theorem]{Corollary}

\theoremstyle{definition}
\newtheorem{remark}[theorem]{Remark}
\newtheorem{example}[theorem]{Example}
\newtheorem{definition}[theorem]{{\bf Definition}}

\title{Logarithmic sheaves of complete intersections}

\author[D. Faenzi]{Daniele Faenzi}
\address{Daniele Faenzi
Institut de Mathématiques de Bourgogne,
UMR 5584 CNRS,
Université de Bourgogne,
9 Avenue Alain Savary,
F-21000, Dijon, France}
\email{daniele.faenzi@u-bourgogne.fr}

\author[M. Jardim]{Marcos Jardim}
\address{Marcos Jardim. Universidade Estadual de Campinas \\ Departamento de Matem\'atica \\
Rua S\'ergio Buarque de Holanda, 651\\ 13083-970 Campinas-SP, Brazil}
\email{jardim@unicamp.br}

\author[J. Vallès]{Jean Vallès}
\address{Jean Vall\`es. Universit\'e de Pau et des Pays de l'Adour,
  LMAP-UMR CNRS 5142, 
  Avenue de l'Universit\'e - BP 1155 -
  64013 Pau Cedex, France}
 \email{jean.valles@univ-pau.fr}
\date{\today}

\makeatletter
\let\@wraptoccontribs\wraptoccontribs
\makeatother

\contrib[With an appendix by]{Alan Muniz}
\address{Alan Muniz.  Universidade Estadual de Campinas \\ Departamento de Matem\'atica \\
Rua S\'ergio Buarque de Holanda, 651\\ 13083-970 Campinas-SP, Brazil}
\email{anmuniz@ime.unicamp.br}


\keywords{Logarithmic sheaves, freeness, and local freeness. Complete intersections. Stable syzygy sheaves. Pencils of quadrics. Algebraic Foliations.}

\subjclass[2010]{AF404; 14J60; 14M10; 32S65}

\begin{document}
\sloppy

\maketitle
\begin{abstract}
We define logarithmic tangent sheaves associated with subvarieties of codimension larger than 1 in connection with Jacobian syzygies and distributions. We analyze the notions of local freeness, freeness, and stability of these sheaves. 

We carry out a complete study of logarithmic sheaves associated with pencils of quadrics and compute their projective dimension from the classical invariants such as the Segre symbol and new invariants (splitting type and degree vector) designed for the classification of irregular pencils. This leads to a complete classification of free (equivalently, locally free) pencils of quadrics.

Finally, we produce examples of locally free, non-free pencils of surfaces in \(\p 3\) of arbitrary degree \(k \ge 3\), answering (in the negative) a question of Calvo-Andrade, Cerveau, Giraldo, and Lins Neto about codimension foliations on \(\p 3\).
\end{abstract}

\tableofcontents

\section{Introduction}

Let $\kappa$ be an algebraically closed field of characteristic zero and consider a sequence $\sigma=(f_1,\dots,f_k)$ of homogeneous polynomials $f_i\in R = \kappa[x_0,\dots,x_n]$ of degree $d_i+1$, for some $0\le d_1\le\dots\le d_k$ and $k\le n$. Let $I_\sigma:=(f_1,\ldots,f_k)$ denote the ideal generated by $\sigma$, and $\VV(\sigma)$ be the associated closed subscheme in $\pn$. Consider the Jacobian matrix of  $\sigma$, namely:
$$ \nabla \sigma = \left( \begin{array}{c}
\nabla f_1 \\ \vdots \\ \nabla f_k
\end{array} \right)  \colon \opn^{\oplus n+1} \lra \oplus_{i=1}^k \opn(d_i)$$
The Jacobian scheme of $\sigma$ is defined by the maximal minors of this matrix; it may have several components of different dimensions and degrees. We will say that $\sigma$ has a \textit{singular divisor of degree $d$} if the Jacobian scheme contains a divisor of degree $d$.

The main focus of this paper, rather than the Jacobian scheme, is the sheaf
\[
\calt_\sigma:=\ker(\nabla\sigma)
\]
which we assume to have rank equal to $n-k+1$. By the Jacobian criterion, this is equivalent to assuming that $\sigma$ is \textit{algebraically independent}, that is there is no polynomial $G\in\kappa[x_1,\dots,x_k]$ such that $G(f_1,\dots,f_k)=0$, see \cite[Ch. I, 11.4]{Lefschetz}. More precisely, we will be mostly concerned with the case when $\sigma$ is a regular sequence so that $\VV(\sigma)$ is a complete intersection subscheme.

We call $\calt_\sigma$ the \emph{logarithmic tangent sheaf associated with $\sigma$}. This nomenclature for $\calt_\sigma$ is motivated by the following observation. Assuming that $\sigma$ is a regular sequence, set $X=\VV(\sigma)$ and recall that the Zariski tangent sheaf $\tx$ and the sheaf $\txprime$ supported at $\sing(X)$ fit into:
\[
0 \to \tx \to \tpn |_X \to \bigoplus_{i=1}^k \calo_X(d_i+1) \to \txprime \to 0.
\]
The \emph{sheaf $\txx$ of vector fields on $\pn$ tangent to $X$} is the kernel of the natural morphism $\tpn \to \bigoplus_{i=1}^k \calo_X(d_i+1)$, see \cite[Chapter 3]{Se}.
It turns out that $\calt_\sigma(1)$ is a subsheaf of rank $n-k+1$ of $\txx$.
More precisely (see Lemma \ref{five terms}), writing 
$\calv_\sigma=\bigoplus_{i=1}^k \cali_{X}(d_i+1) / \calo_{\pn}$
and denoting by $\calq_\sigma$ the cokernel of $\nabla\sigma$, we have:
\[
0 \to \calt_\sigma(1) \to \txx \to \calv_\sigma \to \calq_\sigma(1) \to \txprime \to 0.
\]

When the sequence $\sigma$ consists of a single polynomial $f$ (so that $k=1)$, then $\calv_\sigma=0$ so $\calt_\sigma(1) \simeq \txx$ is precisely the logarithmic tangent sheaf associated with the divisor $\VV(f)$, see for instance \cite{U-Y} or the celebrated \cite{Sa}.

Note that, for $k \ge 2$, the sheaf $\txx$ cannot be locally free. On the other hand, as we shall see, $\calt_\sigma$ may be locally free or even completely decomposable as a sum of line bundles. Hence, we propose the following three definitions, whose goal is to generalize the usual concept of a \emph{free divisor} introduced in \cite{Sa}. 

\begin{defn}\label{defn free}
An algebraically independent sequence $\sigma$ is said to be:
\begin{enumerate}
\item \emph{locally free} if the associated logarithmic tangent sheaf $\calt_\sigma$ is locally free.
\item \emph{free} if the logarithmic tangent sheaf $\calt_\sigma$ splits as a sum of line bundles.
\item \emph{strongly free} if every sequence $\sigma'$ such that $I_{\sigma'}=I_\sigma$ is free.
\end{enumerate}
\end{defn}

Clearly, every free algebraically independent sequence of length $k=1$ is also strongly free. On the other extreme case, we observe that if $\sigma$ is an algebraically independent sequence of length $k=n$, then $\calt_\sigma=\opn(e)$ for some negative integer $e$, since every rank 1 reflexive sheaf on $\pn$ is a line bundle. Therefore, every algebraically independent sequence $\sigma$ in $\kappa[x_0,\dots,x_n]$ of length $n$ is strongly free. We provide explicit examples of free and strongly free regular sequences of length 2 in $\kappa[x_0,x_1,x_2,x_3]$, see Example \ref{free not strong} and Example \ref{strong} below.

Regarding the middle range $1<k<n$, recall that it is notoriously hard to construct indecomposable locally free sheaves of rank $r$ on $\pn$ when $2\le r\le n-2$, cf. \cite[Problem 1 and subsequent paragraph]{H}. In fact, only two examples, up to twist and affine pullback, are known: the Horrocks--Mumford rank 2 bundle on $\p4$ and Horrocks' rank 3 bundle on $\p5$. Furthermore, the Hartshorne conjecture (cf. \cite[Problem 3 and subsequent paragraph]{H}) seems to indicate that every locally free sheaf of rank $r$ on $\pn$ with $3r < n$ splits as a sum of line bundles, as the vanishing locus of a general section of a sufficiently positive twist of a non-split bundle would be a good candidate to contradict the conjecture.
This would imply that locally free algebraically independent sequences of length $k$ in $\kappa[x_0,\dots,x_n]$ are free whenever $3k>2n+3$.

With these facts in mind, it seems natural to investigate algebraically independent sequences of length $2$ in $R=\kappa[x_0,\dots,x_n]$. Following two directions usually pursued in the literature concerning logarithmic sheaves for hypersurfaces, our goal is to find criteria to determine when, on the one hand, an algebraically independent sequence $\sigma=\{f_1,f_2\}$ is (locally) free, and, on the other hand, when the associated logarithmic tangent sheaf $\Ts$ is slope-stable.

We start in Section \ref{prelims} by setting up basic definitions and examples. Most importantly, we provide criteria to determine when an algebraically independent sequence in $R$ is locally free (see Lemma \ref{lf no codim 3}), as well as examples of free regular sequences that are not strongly free (Example \ref{free not strong}), and of a strongly free regular sequence (Example \ref{strong}). In Section \ref{sec:sqc-dist} we show that algebraically independent sequences of length $k$ induce codimension $k-1$ distributions on $\pn$, see Lemma \ref{lem:dist}; these distributions are in fact integrable, and, as it is shown in the Appendix, coincide with the rational foliations introduced by Cukierman, Pereira, and Vainsencher in \cite{CPV}. Yet another important notion in this paper is the compressibility of an algebraically independent $\sigma$ (that is, an algebraically independent sequence of length $k$ consisting only of forms of equal degree): $\sigma$ is said to be \emph{compressible} if there is a variable that occurs in none of the polynomials contained in $\sigma$; an algebraically independent sequence that is not compressible is said to be \emph{incompressible} (see Definition \ref{comp}).

Sections \ref{sec:stable-pencils} through \ref{locally free pencils} are dedicated to a thorough study of algebraically independent sequences consisting of 2 polynomials of degree 2, also referred to as \emph{pencils of quadrics}.

First, recall that the \emph{slope} of a torsion-free sheaf \(F\) of rank \(p>0\) on \(\pn\) of determinant \(\big(\bigwedge^p F\big)^{\vee \vee} \simeq \calo_\pn(e)\) is defined as \(\mu(F)=e/p\).
The sheaf \(F\) is said to be \emph{slope-(semi)stable} if any proper subsheaf \(K\) of \(E\) has slope \(\mu(K)<(\le)~\mu(E)\); $F$ is \emph{slope-polystable} if it is the direct sum of slope-stable sheaves with the same slope, and $F$ is \emph{slope-unstable} if it is not slope-semistable.
The following result is proved in Section \ref{sec:stable-pencils}.

\begin{mthm}\label{mthm:stable}
Let $\sigma$ be a pencil of quadrics in \(\pn\) and let $r_0$ be the maximal corank of the Hessian matrix of the quadrics in the pencil.
\begin{enumerate}[leftmargin=1cm, label=\arabic*. ]   
\item \label{thm c1=0} The singular divisor of $\sigma$ consists of two simple hyperplanes or a double hyperplane if and only if $c_1(\calt_\sigma)=0$ and this happens if and only if $\calt_\sigma \simeq \opn^{\oplus(n-1)}$.
\item \label{thm c1=-1} If $\sigma$ contains one and only one simple singular hyperplane, then $c_1(\calt_\sigma)=-1$ and $\calt_\sigma$ is slope-stable if and only if $\sigma$ is incompressible.
\item If $\sigma$ is compressible and contains no singular hyperplane, then $\calt_\sigma$ is slope-unstable.
\item If $\sigma$ is incompressible and contains no singular hyperplane, then
\begin{enumerate}[label=\roman*)]
\item \(\calt_\sigma\) is slope-stable when \(2r_0 < n+1\);
\item \(\calt_\sigma\) is strictly slope-semistable when \(2r_0 = n+1\);
\item \(\calt_\sigma\) is slope-unstable when \(2r_0 > n+1\).
\end{enumerate}
\end{enumerate}
\end{mthm}

The upshot is that, for the most interesting case  (namely that of incompressible pencils without double hyperplanes), stability depends only on the maximal corank \(r_0\) of the quadrics in the pencil.
By \cite{AL}, semistability of a pencil of quadrics in the sense of geometric invariant theory is equivalent to the fact that the discriminant of the pencil is non-zero (i.e. the pencil is regular) and has no root of multiplicity greater than $(n+1)/2$. So there are many GIT-unstable pencils $\sigma$ whose logarithmic sheaf $\calt_\sigma$ is still slope-semistable or even slope-stable, see Remark \ref{remark on GIT}.

Next, we look at freeness and local freeness of pencils of quadrics and, more generally, at the projective dimension of \(\calt_\sigma\), both in the local and in the graded senses.
This turns out to depend on more subtle invariants of the pencil.
To review them, note that the pencil of quadrics defined by \(\sigma\) gives a symmetric matrix \(\rho_\sigma\) of linear forms on \(\p1\), whose \textit{generic corank} \(r_1\) is the corank of the Hessian matrix of a generic quadric in the pencil.
Note that \(r_1=0\) if and only if \(\sigma\) contains smooth quadrics, we call \(\sigma\) \emph{regular} in this case and \emph{irregular} otherwise.
When \(\sigma\) is irregular, there are integers \(1\le c_1 \le \cdots \le c_{r_1-m}\), with $m=\dim(H^0(\calt_\sigma))$ such that 
the torsion-free part of \(\calc_\sigma=\coker(\rho_\sigma)\) is \(\bigoplus_{i=1}^{r_1} \calo_{\p 1}(c_i)\). We call \(\mathbf{c}=(c_1,\ldots,c_{r_1})\) the \emph{degree vector} of \(\sigma\).
If $\Lambda=\{\lambda_1,\ldots,\lambda_\ell\} \subset \p1$ is the support of the torsion part \(\calc_\t\) of \(\calc_\sigma\), then, for each \(j\in \{1,\ldots,\ell\}\),
denoting by  \(\lambda_j^{(a)}\) the \(a\)-tuple structure over \(\lambda_j\), the localization at \(\lambda_j\) of \(\calc_\t\) is
\(
\bigoplus_{i=1}^{s_j} \calo_{\lambda_j^{(a_{j,i})}}^{\oplus p_{j,i}},
\)
for some \(s_j\) and \((a_{j,i},p_{j,i} \mid i \in \{1,\ldots,s_j\})\).
These data are arranged into the \emph{Segre symbol} \(\Sigma=[\Sigma_1,\ldots,\Sigma_\ell]\), defined for all \(j \in \{1,\ldots,\ell\}\) by:
\[
\Sigma_j=(\underbrace{a_{j,1},\ldots,a_{j,1}}_{p_{j,1}},\ldots,\underbrace{a_{j,s_j},\ldots,a_{j,s_j}}_{p_{j,s_j}}), \qquad \mbox{with \(a_{j,1} > \cdots > a_{j,s_j}\)}.
\]

The data $(r_1,\Lambda,\Sigma)$ completely characterize an incompressible pencil of quadrics up to homography, a result that goes back to Kronecker, Segre and Weierstrass, see Section \ref{segre symbols} for further details. 

With these data, we describe the scheme-theoretic structure of the Jacobian scheme, when $\sigma$ is regular, as a union of nilpotent structures on pairwise disjoint linear spaces whose dimension and degree of nilpotency depend on $\Sigma$ and whose position depends on $\Lambda$.
If $\sigma$ is irregular, the Jacobian scheme contains an additional component which is a rational normal scroll of dimension \(r_1\) and degree \(c_1+\cdots+c_{r_1}\) that connects all the linear spaces, with a prescribed intersection along each space.

The upshot is that these invariants also characterize the projective and graded projective dimensions of $\Ts$, as it is described in the following two results, proved in Section \ref{ext sheaves} and Section \ref{sec:pdim}, respectively

\begin{mthm}
Let \(\sigma\) have Segre symbol \(\Sigma\). For \(q>0\),  \(\inext_{\pn}^q(\calt_\sigma,\calo_\pn) \ne 0\) if and only if there are \(j \in \{1,\ldots,\ell\}\) and \(k \in \{1,\ldots,s_j\}\) such that:
\[
q + p_{j,1}+\ldots+p_{j,k} = n-r_1-1,
\]
or \(r_1>0\) and \(q+r_1=n-2\).
In particular, we have:
\begin{enumerate}[label=\roman*)]
        \item if \(\sigma\) is regular and \(p = \min\{p_{j,1} \mid j \in \{1,\ldots,\ell\}\}\), then
    \(\pdim(\calt_\sigma)= n-p-1;
    \)
    \item if \(\sigma\) is irregular, then \(\pdim(\calt_\sigma)= n-r_1-2\).
\end{enumerate}
\end{mthm}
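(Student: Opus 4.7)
The plan is to reduce the computation of $\inext^q_\pn(\calt_\sigma,\calo_\pn)$ to that of $\inext^{q+2}_\pn(\calq_\sigma,\calo_\pn)$, where $\calq_\sigma=\coker(\calj_\sigma)$, and then read the non-vanishing of the latter off the scheme-theoretic description of the Jacobian scheme already set up in the paper. Splitting the defining four-term exact sequence via the image $\calk:=\im(\calj_\sigma)$ into two short exact sequences and applying $\inhom_\pn(-,\calo_\pn)$, one uses that $\calq_\sigma^\vee=0$ (because $\calq_\sigma$ is torsion) and that the flanking sheaves are locally free to obtain natural isomorphisms
\[
\inext^q_\pn(\calt_\sigma,\calo_\pn)\simeq\inext^{q+1}_\pn(\calk,\calo_\pn)\simeq\inext^{q+2}_\pn(\calq_\sigma,\calo_\pn)\qquad\text{for all }q\ge 1.
\]
The task therefore becomes identifying the values $p=q+2$ for which $\inext^p_\pn(\calq_\sigma,\calo_\pn)\ne 0$.

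Next, I invoke the description of the Jacobian scheme previously derived from the data $(r_1,\Lambda,\Sigma)$: writing $P_{j,k}:=p_{j,1}+\cdots+p_{j,k}$, the scheme $\supp(\calq_\sigma)$ breaks into the linear components $L_{j,k}$ of dimension $r_1+P_{j,k}-1$, each endowed with an $a_{j,k}$-fold nilpotent structure, together with, when $\sigma$ is irregular, a rational normal scroll $S$ of dimension $r_1$ linking all the $L_{j,k}$ along prescribed intersections. Each such piece is locally Cohen--Macaulay: an infinitesimal thickening of a linear subspace of $\pn$ is Cohen--Macaulay, and rational normal scrolls are arithmetically Cohen--Macaulay. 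I then filter $\calq_\sigma$ by supports, from the smallest-dimensional component upward, and use the Kronecker/Segre--Weierstrass canonical form of the pencil at the generic point of each stratum to identify the successive quotients with (twists of) structure sheaves of the above components. The Cohen--Macaulay property concentrates $\inext^p_\pn$ of each graded piece in degree $p$ equal to the codimension of its support, giving
\[
\inext^p_\pn(\calq_\sigma,\calo_\pn)\ne 0 \iff p=n-r_1-P_{j,k}+1\text{ for some }(j,k),\text{ or }r_1>0\text{ and }p=n-r_1.
\]
Substituting $p=q+2$ recovers exactly the equations in the statement.

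For the projective dimension part, I use that on a smooth variety $\pdim(\calt_\sigma)=\max\{q\mid\inext^q_\pn(\calt_\sigma,\calo_\pn)\ne 0\}$. In the regular case ($r_1=0$) the quantity $P_{j,k}$ is minimised at $k=1$ by $p:=\min_j p_{j,1}$, yielding $\pdim(\calt_\sigma)=n-1-p$. In the irregular case the inequality $n-r_1-1-P_{j,k}\le n-r_1-2$ (valid since $P_{j,k}\ge 1$) shows that the scroll contribution $n-r_1-2$ dominates, whence $\pdim(\calt_\sigma)=n-r_1-2$. The main obstacle in this plan is the second step: one must rule out spurious $\inext$ contributions coming from the interaction of the various components of $\supp(\calq_\sigma)$, and show that the graded pieces of the support filtration really are structure sheaves of the asserted Cohen--Macaulay components (up to harmless line-bundle twists). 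This is achieved by localising at the generic point of each stratum and computing the cokernel of the Jacobian matrix in the Kronecker normal form of the pencil, which is permissible thanks to the normal-form results already established earlier in the paper.
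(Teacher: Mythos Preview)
Your approach is essentially the one taken in the paper: reduce to $\inext^{q+2}_\pn(\calq_\sigma,\calo_\pn)$, replace $\calq_\sigma$ by $\calo_{\Xi_\sigma}$ (since $\calq_\sigma$ is a line bundle on $\Xi_\sigma$), filter $\calo_{\Xi_\sigma}$ by Cohen--Macaulay pieces coming from the Segre--Weierstrass normal form, and read off the non-vanishing degrees. The deduction of $\pdim(\calt_\sigma)$ is also identical.

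There is, however, a genuine gap in the step you yourself flag as the main obstacle. You propose to rule out ``spurious $\inext$ contributions coming from the interaction of the various components'' by localising at generic points of the strata. Localisation at generic points suffices to identify the graded pieces of the filtration (up to line-bundle twist), but it does \emph{not} show that the connecting morphisms in the long exact $\inext$-sequences vanish. Those boundary maps are global morphisms between sheaves, and checking them at generic points of strata says nothing about their behaviour on the lower-dimensional loci where the strata meet. The paper closes this gap with a different observation: for each step of the filtration, the relevant $\inext$-sheaves are line bundles supported on equidimensional Cohen--Macaulay subschemes that share no common irreducible component (the successive $\Upsilon^{(k)}$, resp.\ $\hat\Upsilon^{(k)}$ and $Y$, have pairwise distinct dimensions). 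Any morphism between such line bundles has image supported on a proper subscheme of the target and is therefore $\calo$-torsion in a locally free rank-one sheaf, hence zero. You need this (or an equivalent) argument; localisation alone does not supply it.

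A minor correction: the nilpotent multiplicity on the component supported on $L_{j,k}$ is $a_{j,k}-a_{j,k+1}$ (with $a_{j,s_j+1}=0$), not $a_{j,k}$. This does not affect the codimension count, but it is needed if you actually want to verify that the pieces are Cohen--Macaulay and compute their $\inext$-sheaves.
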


We have a rather different situation for the graded projective dimension \(\gpdim(\calt_\sigma)\), namely, the projective dimension of the module of global sections 
of \(\calt_\sigma\). This is summarized in the following result.

\begin{mthm}
For a regular pencil of quadrics \(\sigma\) in \(\pn\) we have \(\gpdim(\calt_\sigma)=n-2\) except if \(\sigma\) has Segre symbol \([1^p,1^q]\) for \(p \ge q \ge 1\) or \([(2^q,1^p)]\) with \(q \ge 1\). In both these cases
    \(\gpdim(\calt_\sigma)=n-q-1.\)

For an irregular pencil of quadrics \(\sigma\) of generic corank \(r_1\) we have \(\gpdim(\calt_\sigma)=n-1\)
except if \(\sigma\) has degree vector \((1,\ldots,1)\), in which case \(\gpdim(\calt_\sigma)=n-r_1-2.\)
\end{mthm}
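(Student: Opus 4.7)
The plan is to apply the Auslander--Buchsbaum formula. Since \(\calt_\sigma\) is reflexive of rank \(n-1\ge 2\), the graded module \(T:=\Gamma_*(\calt_\sigma)\) is saturated, so \(\depth_R(T)\ge 2\) and
\[
\gpdim(\calt_\sigma)=n+1-\depth_R(T)=n-\min\{\,j\ge 1 : H^j_*(\calt_\sigma)\neq 0\,\}.
\]
The task is therefore to find the smallest \(j\ge 1\) with \(H^j_*(\calt_\sigma)\ne 0\). Splitting the four-term sequence
\[
0 \to \calt_\sigma \to \opn^{n+1} \xrightarrow{\calj_\sigma} \opn(1)^2 \to \calq_\sigma \to 0
\]
through \(\calk_\sigma:=\im(\calj_\sigma)\) and chasing the long exact sequences, using the vanishing of intermediate cohomology of \(\opn\) and \(\opn(1)\), I obtain
\[
H^j_*(\calt_\sigma)\cong H^{j-2}_*(\calq_\sigma) \quad\text{for } 3\le j\le n-1,
\]
together with the boundary identifications \(H^2_*(\calt_\sigma)\cong\coker\bigl(R(1)^2\to\Gamma_*(\calq_\sigma)\bigr)\) and \(H^1_*(\calt_\sigma)\cong \Gamma_*(\calk_\sigma)/\im(\calj_\sigma)\). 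In particular \(H^1_*(\calt_\sigma)=0\) iff \(\im(\calj_\sigma)\) is saturated in \(R(1)^2\), and \(H^2_*(\calt_\sigma)=0\) iff \(R(1)^2\) surjects onto \(\Gamma_*(\calq_\sigma)\).

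Next I invoke the explicit scheme-theoretic structure of the Jacobian ideal in terms of \((\Sigma,\mathbf{c})\), established earlier in the paper. For a regular pencil, the support of \(\calq_\sigma\) is a disjoint union of nilpotent thickenings of linear subspaces; their dimensions are dictated by the multiplicities \(p_{j,i}\), the smallest dimension being \(q-1\) in the special symbols \([1^p,1^q]\) and \([(2^q,1^p)]\). For an irregular pencil the Jacobian scheme additionally contains the rational normal scroll \(Y\) of dimension \(r_1\) and degree \(c_1+\cdots+c_{r_1}\). Each linear component of dimension \(d\) contributes to \(H^i_*(\calq_\sigma)\) only in degree \(i=d\) (besides the global sections), while the scroll contributes only in degree \(i=r_1\).

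Case analysis then yields the theorem. \emph{For a regular pencil}, the syzygies of the Jacobian ideal are of Koszul type coming directly from \(\calj_\sigma\), so \(\im(\calj_\sigma)\) is already saturated and \(H^1_*(\calt_\sigma)=0\). Surjectivity of \(R(1)^2\to\Gamma_*(\calq_\sigma)\), equivalently \(H^2_*(\calt_\sigma)=0\), holds precisely when \(\Gamma_*(\calq_\sigma)\) is generated in degrees \(\le 1\); inspecting the minimal generators dictated by \(\Sigma\) shows this occurs exactly for \([1^p,1^q]\) and \([(2^q,1^p)]\). Outside these symbols the minimum is \(j=2\) and \(\gpdim(\calt_\sigma)=n-2\); inside, vanishings up to \(j=q\) follow from the cohomology computation above, while \(H^{q+1}_*(\calt_\sigma)\cong H^{q-1}_*(\calq_\sigma)\ne 0\) from the \((q-1)\)-dimensional linear component, giving \(\gpdim(\calt_\sigma)=n-q-1\). \emph{For an irregular pencil}, the scroll \(Y\) forces non-Koszul syzygies of \(\calq_\sigma\) beyond those in \(\im(\calj_\sigma)\), so generically \(H^1_*(\calt_\sigma)\ne 0\) and \(\gpdim=n-1\). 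The degree vector \((1,\ldots,1)\) is exceptional because the scroll degenerates to a linear subspace, whose ideal and first syzygies are generated by linear forms already present in \(\im(\calj_\sigma)\); both boundary terms then vanish, the reduction above gives \(H^j_*(\calt_\sigma)=0\) for \(j\le r_1+1\), and \(H^{r_1+2}_*(\calt_\sigma)\cong H^{r_1}_*(\calq_\sigma)\ne 0\) from the scroll, yielding \(\gpdim=n-r_1-2\).

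The main technical obstacle is to determine, from the combinatorial data \((\Sigma,\mathbf{c})\), exactly when \(\im(\calj_\sigma)\) is saturated and when \(R(1)^2\) surjects onto \(\Gamma_*(\calq_\sigma)\). Both hinge on a detailed control of the minimal graded generators of \(\Gamma_*(\calq_\sigma)\): in the regular case they come purely from the linear components of the Jacobian scheme, whereas in the irregular case the scroll introduces extra generators whose degrees are governed by \(\mathbf{c}\). The degree vector \((1,\ldots,1)\) is distinguished as the unique case in which those scroll-generators lie in degree one, producing the exceptional values of \(\gpdim\) stated in the theorem.
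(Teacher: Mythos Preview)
Your overall architecture---Auslander--Buchsbaum plus the identification \(\gpdim(\calt_\sigma)=n-q_0\) with \(q_0=\min\{j\ge 1:H^j_*(\calt_\sigma)\ne 0\}\), then pushing the computation through the four-term sequence to \(\calq_\sigma\)---matches the paper's strategy. However, several of the steps you label as routine are in fact the substance of the proof, and one of your claims is false.

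\textbf{Regular pencils, the bound \(\gpdim\le n-2\).} Your justification that \(H^1_*(\calt_\sigma)=0\) because ``the syzygies of the Jacobian ideal are of Koszul type'' is not an argument. The equivalent statement is that the module \(Q_\sigma\) has no socle, i.e.\ \(\Hom_R(\kappa,Q_\sigma)=0\), and this is exactly where regularity of the pencil is used in a nontrivial way: the paper chooses a \emph{smooth} generator, puts it in the normal form \(2f=\sum x_i^2\), and shows directly that a socle element would force \(\calj_\sigma\) to have generic rank \(1\). You have not supplied any mechanism that replaces this.

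\textbf{Regular pencils, deciding when \(H^2_*(\calt_\sigma)=0\).} Your reduction to ``\(\Gamma_*(\calq_\sigma)\) is generated in degrees \(\le 1\)'' is imprecise: what is needed is surjectivity of the \emph{specific} map \(R(1)^2\to\Gamma_*(\calq_\sigma)\), not merely a degree bound on generators. The paper's argument is concrete: from the stratification of \(\Xi_\sigma\) one has \(h^0(\calq_\sigma(-1))=\sum_{j=1}^\ell a_{j,1}\), and this exceeds \(2\)---forcing \(H^1(\calm_\sigma(-1))\ne 0\) and hence \(H^2(\calt_\sigma(-1))\ne 0\)---except precisely when \(\Sigma=[1^p,1^q]\) or \(\Sigma=[(2^q,1^p)]\). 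For those two symbols the paper computes \(\gpdim\) directly from the decomposition \(\calt_\sigma\simeq\calr_L(1)\oplus\calr_M(1)\) in the first case and from the two-step Cohen--Macaulay filtration of \(\calq_\sigma\) in the second. Your ``inspecting the minimal generators dictated by \(\Sigma\)'' does not substitute for either computation.

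\textbf{Irregular pencils with degree vector \((1,\ldots,1)\).} Here you assert that ``the scroll degenerates to a linear subspace.'' This is false for \(r_1\ge 2\): when all \(c_i=1\) the scroll \(Y=\mathbb{P}(\calo_{\p1}(1)^{\oplus r_1})\) is the Segre variety \(\p1\times\p{r_1-1}\subset\p{2r_1-1}\), of degree \(r_1\) and hence certainly not linear. What is actually special about this case, and what the paper uses, is that \(\calo_Y(F)\) (which is the twisted structure sheaf appearing in \(\calq_\sigma\)) is the cokernel of a generic \(2\times r_1\) matrix of linear forms on the span of \(Y\) and therefore admits a Buchsbaum--Rim resolution of length exactly \(\codim Y=n-r_1\). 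Combined with the Cohen--Macaulay stratification of the residual scheme \(\hat\Xi\), this yields \(\gpdim(\calq_\sigma)=n-r_1\) and hence \(\gpdim(\calt_\sigma)=n-r_1-2\). Your argument based on linearity of \(Y\) does not work and would need to be replaced by this resolution argument.

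\textbf{Irregular pencils with some \(c_i\ge 2\).} ``The scroll forces non-Koszul syzygies'' is again a placeholder. The paper's computation is explicit: \(h^0(\calq_\sigma)=u+2r_1\), and one checks \(u+2r_1<n+1\) is equivalent to \(v>r_1\), which holds exactly when some \(c_i\ge 2\); this gives \(h^1(\calt_\sigma)>0\) directly.

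In summary, the skeleton of your proof is right and agrees with the paper, but the four bullet points above are the actual content of the theorem, and none of them follows from the generalities you invoke.
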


With this in mind, after a careful analysis of pencils of quadrics in \(\p 3\), performed in Section \ref{another}, we come to the conclusion that freeness and local freeness are equivalent conditions for pencils of quadrics in \(\p n\) and we completely classify pencils satisfying such condition.

\begin{mthm}\label{mthm:free}
A pencil of quadrics $\sigma$ in $\pn$, $n\ge3$, is free if and only if $\calt_\sigma$ is locally free. More precisely, the only free pencils of quadrics are displayed in Table \ref{table of free}.
\end{mthm}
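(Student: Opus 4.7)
The plan is to use Theorem~B to translate local freeness of $\calt_\sigma$ into a combinatorial condition on the Segre symbol $\Sigma$ and the generic corank $r_1$, enumerate the (few) pencils satisfying this condition, and then verify by direct construction that each such pencil is in fact free; since the implication \emph{free} $\Rightarrow$ \emph{locally free} is trivial, this establishes the equivalence and yields the complete classification.

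By Theorem~B, $\calt_\sigma$ is locally free precisely when, for every $q>0$, no pair $(j,k)$ satisfies $q+p_{j,1}+\cdots+p_{j,k}=n-r_1-1$, and, when $r_1>0$, one also has $q+r_1\ne n-2$. Taking $k=1$ and using that the partial sums $p_{j,1}+\cdots+p_{j,k}$ are non-decreasing in $k$, these conditions collapse to
\begin{equation*}
p_{j,1}\ge n-r_1-1 \text{ for all } j, \qquad\text{and}\qquad r_1\ge n-2 \text{ if } r_1>0.
\end{equation*}

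In the regular case $r_1=0$, the Segre identity $\sum_{j,i}a_{j,i}p_{j,i}=n+1$ combined with $p_{j,1}\ge n-1$ is highly restrictive: a short arithmetic analysis on $\ell$ and on the $s_j$ eliminates all Segre symbols except $[1^{n+1}]$ for arbitrary $n\ge 3$ and the two sporadic symbols $[2^2]$ and $[1^2,1^2]$ occurring only when $n=3$. In the irregular case the bound $r_1\ge n-2$ forces the generic rank of $\sigma$ to be at most three; combined with incompressibility and the admissible degree vectors $\mathbf{c}$ this again produces a short list of normal forms. Pencils containing two double hyperplanes already give $\calt_\sigma=\opn^{\oplus(n-1)}$ by part~(1) of Theorem~A, and the remaining irregular cases are handled one by one using the analysis of Section~\ref{another}.

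For each pencil on the resulting list I would write down a canonical form for $\sigma$, read off $\calt_\sigma=\ker(\calj_\sigma)$ from the Jacobian matrix, and display the splitting into line bundles; the outcome is recorded in Table~\ref{table of free}. The main obstacle is exhibiting this splitting in the sporadic cases (notably $[2^2]$ and $[1^2,1^2]$ in $\p 3$ and the low-rank irregular pencils), where the canonical forms developed in Section~\ref{another} are the essential ingredient; once these are in hand, each kernel computation is routine.
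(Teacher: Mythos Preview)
Your approach is essentially the paper's: translate local freeness into combinatorial constraints via Theorem~B, enumerate the admissible invariants, and check freeness on normal forms. There is, however, a concrete error in your regular-case enumeration. The Segre symbol $[1^{n+1}]$ does not correspond to any pencil: it would require a single singular fibre $Q_\lambda$ of corank $n+1$, i.e.\ the zero quadratic form, which cannot occur in a pencil spanned by two independent quadrics. Equivalently, one always has $p_{j,1}\le\sum_i p_{j,i}\le n$ (see the remark preceding \eqref{just sum}), so $p_{1,1}=n+1$ is excluded. The arithmetic therefore leaves, for $r_1=0$ and $n\ge3$, only $[(2,2)]$ and $[(1,1),(1,1)]$, both forcing $n=3$ --- this is precisely Corollary~\ref{regular locally free}.

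A second point is organizational. The paper first reduces to the associated incompressible pencil $\hat\sigma$ in $\p{\hat n}$ via Lemma~\ref{split compressible}, and only then splits into regular versus irregular according to $\hat r_1$. Your dichotomy is on $r_1$ for $\sigma$ itself, and the phrase ``combined with incompressibility'' does not quite cover the case of a compressible pencil whose reduction $\hat\sigma$ is \emph{regular} (e.g.\ $\hat n=2$, accounting for several rows of Table~\ref{table of free}): such a pencil has $r_1>0$ in $\pn$ but the scroll clause of Theorem~B is vacuous for it. In the genuinely irregular branch, the paper combines $\hat r_1\ge\hat n-2$ with the a~priori bound $3\hat r_1\le\hat n+1$ of Lemma~\ref{3r1} to force $\hat n\le3$; this is the step behind your ``generic rank at most three''. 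Once $\hat n\le3$ is known, the paper invokes the full classification already carried out in Section~\ref{another} rather than recomputing each kernel from scratch.
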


By contrast, we provide in Section \ref{locally free higher} a series of examples of locally free pencils of degree $k\ge3$ that are not free. This indicates that potentially interesting vector bundles may arise as logarithmic sheaves associated with regular sequences of higher degrees having deep singularities. To understand our following result, recall that a \emph{null correlation bundle} is defined as the cokernel of a non vanishing morphism $\op3(-1)\to\Omega^1_{\p3}(1)$; every slope-stable rank 2 locally free sheaf $N$ on $\p3$ with $c_1(N)=0$ and $c_2(N)=1$ arises in this way.

\begin{mthm} \label{main nonfree}
Fix $k\ge0$, and consider the pencil \(\sigma = (f,g)\) of degree $k+3$ with:
$$ f= x_0x_1^{k+2}+x_2^{k+3}+x_2^{k+2}x_3 ~~\text{  and  }~~
g = x_2x_3(x_2^{k+1}-x_1^{k+1}). $$
Then \(\calt_\sigma \simeq N(-k-2)\), where \(N\) is a null correlation bundle.
\end{mthm}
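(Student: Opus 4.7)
The plan is to show that \(E := \calt_\sigma(k+2)\) is a rank \(2\) vector bundle on \(\p 3\) with Chern classes \(c_1(E) = 0\) and \(c_2(E) = 1\). A rank \(2\) bundle on \(\p 3\) with \(c_1=0\) either splits as \(\calo^{\oplus 2}\), which has \(c_2=0\), or is slope-stable; hence \(E\) would be stable, and, as recalled just before the statement, every such stable bundle arises as a null correlation bundle. Thus it suffices to establish rank, local freeness, and the two Chern classes of \(E\).

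That \((f,g)\) is a regular sequence follows from \(\gcd(f,g)=1\): the variable \(x_0\) occurs in \(f\) (through \(x_0 x_1^{k+2}\)) but not in \(g\), so \(V(\sigma)\) has codimension \(2\). From the four-term exact sequence
\[
0 \to \calt_\sigma \to \calo_{\p 3}^{\oplus 4} \xrightarrow{\calj_\sigma} \calo_{\p 3}(k+2)^{\oplus 2} \to \calq_\sigma \to 0,
\]
the cokernel \(\calq_\sigma\) is supported in codimension \(\ge 2\), which yields \(c_1(\calt_\sigma) = -2(k+2)\) and hence \(c_1(E)=0\). The value \(c_2(E)=1\) then follows by extracting the second Chern character of \(\calq_\sigma\) from this sequence, once its scheme-theoretic structure has been determined from the Jacobian ideal of \((f,g)\).

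The main obstacle is establishing that \(\calt_\sigma\) is locally free. A direct inspection reveals that the \(2\times 2\) minors of \(\calj_\sigma\) vanish along codimension-\(2\) loci (for instance the lines \(\{x_1=x_2=0\}\) and \(\{x_2=x_3=0\}\)), so local freeness cannot be read off from the mere non-degeneracy of the matrix; rather, one has to track cancellations among syzygies, in analogy with the syzygy module of \(\mathfrak m^2 \subset \kappa[x,y]\), which is free of rank \(2\) despite the presentation matrix vanishing at the origin. My plan is therefore to apply the criterion of Lemma \ref{lf no codim 3}, after a careful study of the syzygies of \((\partial_i f,\partial_i g)_{i=0,\ldots,3}\) in a neighbourhood of each bad line, in order to verify that \(\calt_\sigma\) has no genuine singular points. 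A possibly cleaner alternative is to exhibit \(\calt_\sigma(k+2)\) directly as the middle cohomology of a monad \(\calo_{\p 3}(-1) \to \calo_{\p 3}^{\oplus 4} \to \calo_{\p 3}(1)\) with non-degenerate skew-symmetric composition, which would simultaneously give local freeness, the correct Chern classes, and the null correlation structure, yielding \(\calt_\sigma \simeq N(-k-2)\) in one stroke.
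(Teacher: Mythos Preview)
Your proposal is a reasonable strategy outline, but it is not a proof: the two decisive steps---local freeness of \(\calt_\sigma\) and the computation \(c_2(\calt_\sigma(k+2))=1\)---are precisely the ones you leave unexecuted. Saying ``my plan is to apply Lemma~\ref{lf no codim 3} after a careful study of the syzygies'' or ``a cleaner alternative is to exhibit a monad'' is not the same as doing either. The Jacobian scheme here is genuinely complicated (several non-reduced primary components meeting along the line \(x_1=x_2=0\), plus an embedded component), so verifying directly that \(\calq_\sigma\) has no subsheaf of codimension~\(\ge 3\) is real work, not a formality; likewise, producing the monad maps amounts to finding the null correlation structure explicitly, which is the content of the theorem. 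There is also a minor logical slip: a rank~\(2\) bundle on \(\p3\) with \(c_1=0\) is \emph{not} automatically either \(\calo^{\oplus 2}\) or slope-stable---what is true is that with \(c_1=0\) and \(c_2=1\), a non-stable bundle would have a section vanishing along a degree-\(1\) locally complete intersection curve with \(\omega_Z\simeq\calo_Z(-4)\), which is impossible for a line; you should state it that way.

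The paper's proof takes a different, constructive route that you may find instructive. Rather than computing invariants and invoking a classification, it exploits the special shape of \(g\): the divisor \(V(g)\) is a free hyperplane arrangement, so \(\calt_g\simeq\calo_{\p3}\oplus\calo_{\p3}(-1)\oplus\calo_{\p3}(-k-1)\) with an explicit syzygy matrix. Using the inclusion \(\calt_\sigma\hookrightarrow\calt_g\) (from the diagram in display~\eqref{sub tf}) and the fact that \(\partial_0 g=0\), one obtains a map \(\calt_\sigma\to\calo_{\p3}(-1)\oplus\calo_{\p3}(-k-1)\) whose cokernel is the ideal of an explicit curve in a non-reduced surface. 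Tracking residual schemes through one more step yields an exact sequence
\(0\to\calo_{\p3}(-k-3)\to\calt_\sigma\to\cali_E(-k-1)\to 0\)
where \(E\) is a double line of arithmetic genus \(-1\); this is exactly the Serre extension defining a null correlation bundle twisted by \(-k-2\). The advantage of this approach is that local freeness, the Chern classes, and the identification with \(N(-k-2)\) all come out simultaneously from the explicit filtration, rather than having to be checked separately.
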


We complete this paper with an application of our results to the study of rational codimension one foliations on $\pn$, see Section \ref{sec:ratfols}. To be precise, recall that a \emph{rational 1-form} is a twisted 1-form given by the expression
$$ \omega = (d_1+1)f_1\cdot \mathrm{d}f_2 - (d_2+1)f_2\cdot \mathrm{d}f_1 ~ \in H^0(\Omega_{\pn}^1(d_1+d_2+2)), $$
where $f_i\in H^0(\opn(d_i+1))$ for $i = 1,2$ and $f_1,f_2$ have no common factors. Regarding $\omega$ as a morphism $\tpn\to\opn(d_1+d_1+2)$, we consider the kernel sheaf $\calk_\omega:=\ker\omega$.  We show in Section \ref{sec:ratfols} that the natural 1-1 correspondence $(f_1,f_2)\leftrightarrow\omega$ between regular sequences of length 2 and rational 1-forms is such that $\calk_\omega=\Ts(1)$, see Lemma \ref{rat-split}. 

This fact has two important consequences. First, we can invoke a result from the general theory of codimension one distributions on $\p3$, presented in \cite{CCJ1}, to obtain simple criteria to establish when $\Ts$ is slope-(semi)stable, see Corollary \ref{cor:stable}.

Second, we provide a negative answer to a problem posed by Calvo-Andrade, Cerveau, Giraldo, and Lins Neto, see \cite[Problem 2]{CACGLN}; namely, these authors asked whether the tangent sheaf of a codimension one foliation must split as a sum of line bundles whenever it is locally free. While Theorem \ref{mthm:free} implies that this claim is true for rational foliations of type $(2,2)$, Theorem \ref{main nonfree} says that for each $k\ge0$ there are rational foliations of type $(k+3,k+3)$ on $\p3$ whose tangent sheaf is a slope-stable locally free sheaf.


\subsection*{Acknowledgments}
We thank Eduardo Esteves for pointing us out to the results of \cite{AL}, Cleto Miranda Neto for useful discussions, and Alan Muniz for revising a preliminary version of the paper, as well as writing an Appendix that elucidates the relation between the new theory here proposed and the theory of holomorphic foliations on projective spaces.
This research is part of the CAPES/COFECUB project \emph{Moduli spaces in algebraic geometry and applications}, Capes reference number 88887.191919/2018-00. 
D.F. is partially supported by EIPHI Graduate School ANR-17-EURE-0002 and ANR Fano-HK ANR-20-CE40-0023.
M. J. is partially supported by the CNPQ grant number 305601/2022-9 and the FAPESP Thematic Project 2018/21391-1.
The authors also acknowledge the financial support from Coordenação de Aperfeiçoamento de Pessoal de Nível Superior - Brasil (CAPES) - Finance Code 001 and ANR BRIDGES ANR-21-CE40-0017.

We warmly thank the referee of the first version of the manuscript for pointing out several classical references and results that greatly helped improve this paper.

\section{Basic definitions and examples}\label{prelims}

Let \(\kappa\) be a field of characteristic 0, \(n \in \N\) and put \(R=\kappa[x_0,\ldots,x_n]\).
In this section, we will give some preliminary properties of logarithmic tangent sheaves associated with complete intersections.
Many properties remain valid for \(\kappa\)  of characteristic different from 2.

\subsection{General framework}

Let $\sigma:=\{f_1,\dots,f_k\}$ be an algebraically independent sequence of homogeneous polynomials in \(R\), with  $d_i=\deg(f_i)-1$ for $i \in \{1,\ldots,k\}$. Consider the associated subscheme $X=\VV(\sigma)\subset\pn$ and set  $\nabla\sigma$ for the associated Jacobian matrix. We write $\partial_j$ the partial derivative $\frac{\partial}{\partial x_j}$ for $j \in \{0,\ldots,n\}$ and 
$\nabla f=(\partial_0f, \ldots ,\partial_n f)$ the gradient of a homogeneous polynomial \(f \in R\).

We put $\calt_\sigma$ for the associated logarithmic tangent sheaf as defined in the Introduction, namely $\calt_\sigma$ is the kernel of $\nabla\sigma$.
In addition, we define the sheaves $\calm_\sigma:=\im(\nabla\sigma)$ and $\calq_\sigma:=\coker(\nabla\sigma)$. The sheaf $\calm_\sigma$ is torsion-free, it can be thought of as the natural extension to $\pn$ of the equisingular normal sheaf of $X=\VV(\sigma)$; furthermore, the sheaf $\calt_\sigma$ is reflexive because it is the kernel of a morphism between locally free sheaves.  
We have the fundamental exact sequences:
\begin{equation} \label{tn-sec}
0 \lra \calt_\sigma \lra \opn^{\oplus n+1} \stackrel{\nabla\sigma}{\lra} \calm_\sigma \lra 0,
\qquad 
0 \lra \calm_\sigma \lra \oplus_{i=1}^k \opn(d_i) \lra \calq_\sigma  \lra 0,
\end{equation}

The sheaf $\calt_\sigma$ can be viewed as the sheaf of syzygies of the Jacobian matrix $\nabla\sigma$.
Indeed, a syzygy of degree $a$ of $\nabla\sigma$ is a morphism $\nu:\opn(-a)\to\opn^{\oplus n+1}$ such that $\nabla\sigma\circ\nu=0$.
Write ${\rm Syz}_a(\nabla\sigma)$ for the vector space of syzygies of degree $a$ for the matrix $\nabla\sigma$.
Note that every syzygy of degree $a$ for $\nabla\sigma$ induces a section in $H^0(\calt_\sigma(a))$. Conversely, every non-trivial section in $H^0(\calt_\sigma(a))$ induces a syzygy of degree $a$ for $\nabla\sigma$, thus we obtain an isomorphism of vector spaces
\begin{equation}\label{iso-syz}
H^0(\calt_\sigma(a)) \simeq {\rm Syz}_a(\nabla\sigma).
\end{equation}

We define the \emph{Jacobian scheme} $\Xi_\sigma$ as the degeneracy locus of $\nabla\sigma$,
$$ \Xi_\sigma := \VV \left( \bigwedge^k \nabla\sigma \right).$$
This is the subscheme of $\pn$ defined by the common zeros of the $k\times k$ minors of $\nabla\sigma$.
The reduced structure $(\Xi_\sigma)_{\rm red}$ coincides with the support of the sheaf $\calq_\sigma$. Note that $(\Xi_\sigma)_{\rm red}$ may contain a hypersurface.

\smallskip

More precisely, the image of the exterior power morphism:
\begin{equation}\label{det-jc}
\bigwedge^k\nabla\sigma : \opn^{\oplus{\binom{n+1}{k}}} \to \opn(d_1+\cdots+d_k);
\end{equation}
is of the form  $\cali_{W_\sigma}(d_1+\cdots+d_k-l)$, where $l:=c_1(\calq_\sigma)$ is the degree of the divisorial part $\Xi_{\sigma}$, and $W_\sigma\subset\pn$ is a subscheme of codimension at least 2, possibly not pure. 
Let us illustrate this discussion with an explicit example.

\begin{example}\label{ex s=1 t=3}
Let $f=x_0x_1 + x_2x_3$ and $g=x_0x_1x_2x_3$, and note that neither $f$ and $g$ have common factors, nor do $\nabla f$ and $\nabla g$; however, one can check that  $(\Xi_\sigma)_{\rm red}$ contains a hypersurface.

Indeed, we have that:
$$ \nabla_{\sigma}:= \left( \begin{array}{cccc}
x_1  & x_0  & x_3  & x_2 \\
x_1x_2x_3  & x_0x_2x_3  & x_0x_1x_3  & x_0x_1x_2 
\end{array} \right). $$
The reduced structure of curve $C=\VV(f,g)$ is the union of the four lines $\VV(x_i,x_j)$ with $i=0,1$ and $j=2,3$.
Two of the $2\times2$ minors of $\nabla\sigma$ vanish identically, and we have that
$$ \bigwedge^2 \nabla\sigma = (x_0x_1-x_2x_3)\cdot 
( 0 ~~~~ x_0x_3 ~~~~ x_1x_2 ~~~~ x_1x_3 ~~~~ x_0x_2 ~~~~ 0 ). $$
It follows that $(\Xi_\sigma)_{\rm red}$ consists of the quadric $\VV(x_0x_1-x_2x_3)$, so that $l=c_1(\calq_\sigma)=2$ in this case, plus two lines $\VV(x_0,x_1)$ and $\VV(x_2,x_3)$; $W_\sigma$ is the union of these two skew lines. 
\end{example}

\begin{lemma} \label{lf no codim 3}
Let $\sigma$ be an algebraically independent sequence. Then:
\begin{enumerate}[label=\roman*)]
\item $\sigma$ is locally free if and only if $\calq_\sigma$ has no subsheaf of codimension \(\ge 3\);
\item if $\sigma$ is locally free,  $\Xi_\sigma$ has no irreducible component of codimension \( \ge 3\). 
\end{enumerate}
\end{lemma}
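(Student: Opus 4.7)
The strategy is to splice the two fundamental exact sequences (\ref{tn-sec}) and (\ref{nq-sec}) into a single four-term complex, reduce the local freeness of $\calt_\sigma$ to a projective dimension statement on $\calq_\sigma$, and then translate this into the codimension condition on subsheaves.

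First, I would form the complex
\[
0 \to \calt_\sigma \to \opn^{\oplus n+1} \xrightarrow{\calj_\sigma} \bigoplus_{i=1}^k \opn(d_i) \to \calq_\sigma \to 0,
\]
whose two middle terms are locally free. Consequently $\calt_\sigma$ is locally free at every $p \in \pn$ if and only if $\pdim_{\calo_{\pn,p}}(\calq_\sigma)_p \le 2$ everywhere. Applying $\RHom(-,\opn)$ to this complex and using the vanishing of $\inext^q$ for $q \ge 1$ on the locally free middle terms produces natural isomorphisms $\inext^q(\calt_\sigma,\opn) \simeq \inext^{q+2}(\calq_\sigma,\opn)$ for all $q \ge 1$, so the condition is equivalently $\inext^q(\calq_\sigma,\opn) = 0$ for every $q \ge 3$.

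Next, I would observe that a nonzero subsheaf of $\calq_\sigma$ of codimension $\ge 3$ exists if and only if some associated prime of $\calq_\sigma$ has codimension $\ge 3$ (embedded or minimal). The forward direction of (i) is then immediate from Auslander-Buchsbaum: at such a prime $\mathfrak{p}$ of codimension $c \ge 3$, we have $\depth(\calq_\sigma)_\mathfrak{p}=0$, which forces $\pdim(\calq_\sigma)_\mathfrak{p} = c \ge 3$, contradicting pdim $\le 2$. For the converse I would use that $\calt_\sigma$, being the kernel of a map from a free sheaf on a smooth variety, is reflexive, hence already locally free outside a locus of codimension $\ge 3$. At a codimension-$3$ prime $\mathfrak{p}$: the absence of associated primes of codim $\ge 3$ gives $\depth(\calq_\sigma)_\mathfrak{p} \ge 1$, and the depth inequality applied to (\ref{nq-sec}) then yields $\depth(\calm_\sigma)_\mathfrak{p} \ge \min(3,\depth(\calq_\sigma)_\mathfrak{p}+1) \ge 2$, so $\pdim(\calm_\sigma)_\mathfrak{p} \le 1$ and $\calt_\sigma$ is free at $\mathfrak{p}$. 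Extending to points of codimension $\ge 4$ requires iterating the depth estimate while exploiting the specific width $n+1$ of the Jacobian presentation of $\calq_\sigma$.

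Finally, (ii) is an immediate corollary of (i): any irreducible component $Z$ of $\Xi_\sigma = \supp \calq_\sigma$ of codimension $\ge 3$ has its generic point as a minimal associated prime of $\calq_\sigma$ of codimension $\ge 3$, hence yields a nonzero subsheaf of $\calq_\sigma$ of codimension $\ge 3$, contradicting (i). The main obstacle in the plan is the extension step in the converse of (i): the implication "all associated primes of codim $\le 2$" $\Rightarrow$ "$\pdim_p(\calq_\sigma)_p \le 2$ for all $p$" is not purely formal and genuinely uses the presentation structure of $\calq_\sigma$ coming from the Jacobian matrix, which caps the length of the minimal free resolution so that the pathological higher-codimension pdim jumps that occur for arbitrary torsion modules are ruled out.
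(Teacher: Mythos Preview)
Your approach is essentially the paper's: dualize the two short exact sequences to obtain
\(\inext_{\pn}^{q}(\calt_\sigma,\opn)\simeq\inext_{\pn}^{q+2}(\calq_\sigma,\opn)\) for \(q\ge 1\),
reduce local freeness of \(\calt_\sigma\) to the vanishing of \(\inext_{\pn}^{j}(\calq_\sigma,\opn)\) for \(j\ge 3\), and deduce (ii) from (i). Your forward implication in (i) and your proof of (ii) are correct and match the paper.

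The gap you single out in the converse of (i) is genuine, and the paper does not fill it either: after establishing the Ext isomorphism, the paper simply writes ``This gives the equivalence in the first claim'' with no further argument linking the Ext-vanishing to the absence of codimension-\(\ge 3\) subsheaves. Your instinct that this step is not purely formal is right. For an arbitrary coherent sheaf on \(\pn\) the two conditions are \emph{not} equivalent: take a smooth codimension-2 subvariety \(S\subset\p4\) and a closed point \(p\in S\); the sheaf \(\cali_{p/S}\) has the generic point of \(S\) as its only associated prime (codimension 2), yet \(\depth_{\calo_{\p4,p}}(\cali_{p/S})_p=1\) so \(\pdim=3\) and \(\inext^3(\cali_{p/S},\calo_{\p4})\ne 0\). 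Thus ``no subsheaf of codimension \(\ge 3\)'' does not force \(\inext^{\ge 3}=0\) in general.

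Two further remarks. First, your suggested rescue via the ``width \(n+1\)'' of the Jacobian presentation does not work: that presentation only yields \(\pdim(\calq_\sigma)_p\le 2+\pdim(\calt_\sigma)_p\), which is circular. Second, in the paper's applications only the forward direction of (i) and part (ii) are actually invoked (for instance in Section~\ref{locally free higher}, where one checks directly that \(\calq_\sigma\) has no zero-dimensional subsheaf and concludes local freeness). So the potentially imprecise half of (i) is not load-bearing, but you are right to flag it.
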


Note that $(\Xi_\sigma)_{\rm red}$ may have no irreducible component of codimension at least 3 even when $\calq_\sigma$ admits a subsheaf of codimension at least 3, see Section \ref{locally free higher}. This means that the converse of item \textit{(ii)} above does not hold in general.

\begin{proof}
Taking duals of \eqref{tn-sec} we obtain $\inext_{\pn}^{n-1}(\calt_\sigma,\opn)=0$ and, for $j\le n-1$:
$$ \inext_{\pn}^{j}(\calt_\sigma,\opn) \simeq \inext_{\pn}^{j+1}(\calm_\sigma,\opn) \simeq \inext_{\pn}^{j+2}(\calq_\sigma,\opn).$$

The sheaf $\calt_\sigma$ is locally free if and only if
$\inext_{\pn }^{j}(\calt_\sigma,\opn)=0$ for $j\ge1$, which is equivalent to requiring that $\inext_{\pn }^{j}(\calq_\sigma,\opn)=0$ for $j\ge3$. This gives the equivalence in the first claim. 

If $(\Xi_\sigma)_{\rm red}$ has an irreducible component $Y$ of codimension $j\ge3$, then, since $(\Xi_\sigma)_{\rm red}$ is the support of $\calq_\sigma$, it follows that $\calq_\sigma$ has a non-trivial subsheaf $\calv\into\calq_\sigma$ supported on $Y$, hence $\codim \calv=j$. The previous item then implies that $\calt_\sigma$ is not locally free.
\end{proof}

In particular, we observe that every algebraically independent sequence $\sigma$ on $\kappa[x_0,x_1,x_2]$ is locally free. Furthermore, as we pointed out in the introduction, every algebraically independent sequence of length $n$ in $\kappa[x_0,\dots,x_n]$ is free, since $\calt_\sigma=\opn(e)$ for some $e\in\Z$.


\subsection{Algebraically independent sequences and distributions}\label{sec:sqc-dist}

Recall that a \emph{codimension $r$ distribution on $\pn$} is a short exact sequence of the form
$$ \sD ~~ \colon ~~ 0\lra \calt_\sD\lra \tpn \lra \caln_\sD \lra 0 $$ 
where $\caln_\sD$ is a torsion-free sheaf of rank $r$ and $\calt_\sD$ is a reflexive sheaf of rank $n-r$, respectively called the \emph{normal} and \emph{tangent} sheaves of $\sD$. We refer to \cite[Section 2.1]{CCJ1} for further details on the general theory of distributions.

Let us point out how distributions are related to algebraically independent sequences. First, thinking of the Koszul complex attached to $\sigma$ we consider $\tilde \sigma=((d_1+1)f_1,\dots,(d_k+1)f_k)^{\rm t}$ and the \textit{Koszul syzygy} sheaf $\cals_\sigma:=\coker(\tilde \sigma)$, so:
\begin{equation} \label{cals}
0 \to \calo_{\pn}(-1) \to \bigoplus_{i=1}^k \opn(d_i)\to \cals_\sigma \to 0.
\end{equation}

Let $\eta:\opn(-1)\lra\opn^{\oplus n+1}$ be the Euler morphism, namely $\eta = (x_0,\ldots,x_n){}^\t$. The Euler relation gives $\eta\cdot\nabla f_i=(d_i+1)f_i$ for all $i \in \{1,\ldots,k\}$. This allows us to construct the following commutative diagram:
\begin{equation}\label{diag1}
\begin{split} \xymatrix@-2ex{ 
& & 0 \ar[d] & 0 \ar[d] \\
& & \opn(-1)\ar[d]^{\eta}\ar@{=}[r] & \opn(-1)\ar[d]^-{\tilde\sigma} \\
0\ar[r] & \calt_\sigma \ar[r]\ar@{=}[d] & \opn^{\oplus n+1} \ar[r]^-{\nabla\sigma}\ar[d] & \bigoplus_{i=1}^k \opn(d_i)\ar[d] \\
0\ar[r] & \calt_\sigma \ar[r] & \tpn(-1) \ar[r]\ar[d] & \cals_\sigma \ar[d] \\
& & 0 & 0 
} \end{split}
\end{equation}
Here we used that \(\kappa\) is of characteristic zero, or rather that the characteristic of $\kappa$ does not divide $d_i+1$ for all $i \in \{1,\ldots,k\}$.

Note that the image of $\tilde \sigma$ is contained in $\calm_\sigma$ and set
$\caln_\sigma$ for the cokernel of $\tilde\sigma$, corestricted to  $\calm_\sigma$.
The previous diagram gives:
\begin{equation}\label{diag2}
\begin{split} \xymatrix@-2ex{ 
& & 0 \ar[d] & 0 \ar[d] & \\
& & \opn(-1)\ar[d]^{\eta}\ar@{=}[r] & \opn(-1)\ar[d]^{\tilde \sigma} \\
0\ar[r] & \calt_\sigma \ar[r]\ar@{=}[d] & \opn^{\oplus n+1} \ar[r]^{\nabla\sigma}\ar[d] & \calm_\sigma \ar[d]\ar[r] & 0 \\
0\ar[r] & \calt_\sigma \ar[r] & \tpn(-1) \ar[r]\ar[d] & \caln_\sigma \ar[d] \ar[r] & 0\\
& & 0 & 0 
} \end{split}
\end{equation}
Furthermore, we have a second diagram featuring the cokernel sheaf $\calq_\sigma$:
\begin{equation}\label{diag3}
\begin{split} \xymatrix@-2ex{ 
& 0 \ar[d] & 0 \ar[d] & & \\
& \opn(-1)\ar[d]\ar@{=}[r] & \opn(-1)\ar[d]^{\tilde\sigma} & \\
0\ar[r] & \calm_\sigma \ar[r]\ar[d] & \oplus_{i=1}^k \opn(d_i) \ar[r]\ar[d] & \calq_\sigma \ar@{=}[d]\ar[r] & 0 \\
0\ar[r] & \caln_\sigma \ar[r]\ar[d] & \cals_\sigma \ar[r]\ar[d] & \calq_\sigma \ar[r] & 0\\
& 0 & 0 &  
} \end{split}
\end{equation}
It follows that the bottom line in diagram in display \eqref{diag2} defines, for $k\ge2$, a \emph{codimension $k-1$ distribution $\sD_\sigma$ on $\pn$}, given by the exact sequence
\begin{equation}\label{distribution}
\sD_\sigma ~\colon~ 0 \lra \calt_\sigma(1) \lra \tpn \lra \caln_\sigma(1) \lra 0.
\end{equation}

Summing up, we have proved the following statement.

\begin{lemma}\label{lem:dist}
Every algebraically independent sequence $\sigma$ of length $k$ on $n+1$ variables induces a codimension $k-1$ distribution $\sD_\sigma$ on $\pn$ such that $\calt_{\sD_\sigma}=\Ts(1)$.
\end{lemma}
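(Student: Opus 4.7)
The plan is to read off the desired distribution directly from the bottom row of diagram \eqref{diag2}. That diagram already produces the exact sequence
\[
0 \lra \calt_\sigma \lra \tpn(-1) \lra \caln_\sigma \lra 0,
\]
obtained by taking cokernels of the Euler column $\eta$ and the column $\tilde\sigma$ using the snake lemma (or simply by a diagram chase). Twisting by $\opn(1)$ then yields
\[
0 \lra \calt_\sigma(1) \lra \tpn \lra \caln_\sigma(1) \lra 0,
\]
which is a candidate for $\sD_\sigma$, and tautologically realizes $\calt_\sigma(1)$ as the tangent sheaf of this quotient. So the substance of the proof is to verify the three conditions defining a codimension $k-1$ distribution: the correct rank, reflexivity of $\calt_\sigma(1)$, and torsion-freeness of $\caln_\sigma(1)$.

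For the ranks, since $\sigma$ is a regular sequence the image sheaf $\calm_\sigma$ has rank $k$, so from \eqref{tn-sec} we get $\rk \calt_\sigma = n+1-k$, i.e. $\calt_\sigma(1)$ has rank $n-k+1 = n-(k-1)$, and $\caln_\sigma$ has rank $n - (n+1-k) = k-1$, as required. Reflexivity of $\calt_\sigma(1)$ (equivalently of $\calt_\sigma$) was already recorded in the discussion preceding the sequences \eqref{tn-sec}--\eqref{nq-sec}; alternatively it is immediate since $\calt_\sigma=\ker(\calj_\sigma)$ is the kernel of a morphism from a locally free sheaf to the torsion-free sheaf $\calm_\sigma$, and kernels of such maps between reflexive sheaves are reflexive.

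The only slightly delicate point is torsion-freeness of $\caln_\sigma$, which is the step I expect to need the most care. For this I would use diagram \eqref{diag3}: its bottom row exhibits $\caln_\sigma$ as a subsheaf of the Koszul syzygy sheaf $\cals_\sigma$. Since $\sigma$ is a regular sequence, the Koszul complex on $\sigma$ is exact, so $\cals_\sigma$ is the first syzygy of the Koszul presentation and hence torsion-free (one can see this concretely from \eqref{cals} together with the next Koszul differential $\bigoplus \opn(d_i) \to \bigoplus_{i<j} \opn(d_i+d_j+1)$, which exhibits $\cals_\sigma$ as a subsheaf of a locally free sheaf). Consequently $\caln_\sigma \hookrightarrow \cals_\sigma$ is torsion-free, and so is its twist. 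This completes the verification of the distribution axioms and yields $\calt_{\sD_\sigma} = \calt_\sigma(1)$ by construction.
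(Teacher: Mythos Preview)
Your proof is correct and follows the same approach as the paper: the paper's argument is precisely the construction of diagrams \eqref{diag1}--\eqref{diag3} culminating in the exact sequence \eqref{distribution}, and you read off the distribution from the bottom row of \eqref{diag2} exactly as intended. Your explicit verification of torsion-freeness of $\caln_\sigma$ via the embedding $\caln_\sigma \hookrightarrow \cals_\sigma$ from diagram \eqref{diag3}, together with the Koszul argument showing $\cals_\sigma$ is a subsheaf of a locally free sheaf, makes precise a step the paper leaves to the reader.
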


It is shown in Proposition \ref{prop:log-fol} in the Appendix that the distributions $\sD_\sigma$ constructed above are integrable, and coincide with the rational foliations introduced by Cukierman, Pereira, and Vainsencher in \cite{CPV}.

However, not every codimension $k-1$ distribution on $\pn$ comes from an algebraically independent sequence via the construction above. For instance, given a codimension $k-1$ distribution $\sD$ on $\pn$, the monomorphism $\calt_\sD\into\tpn$ may not factor through $\opn(1)^{\oplus n+1}$. 


\subsection{Logarithmic tangent sheaf and deformations}

Let us point out the relationship between our sheaf and classical sheaves of tangent vector fields, in connection with locally trivial deformations of embeddings.

\subsubsection{Tangent vector fields along a complete intersection}

Given a regular sequence $\sigma$, the subscheme $X=\VV(\sigma) \subset \pn$ is a complete intersection whose ideal sheaf $\cali_X$ is generated by $\tilde{\sigma}^\vee : \oplus_{i=1}^k \calo_{\pn}(-d_i) \to \cali_X(1)$. In addition, we have the \emph{equisingular normal sheaf} $N'_{X/\pn}$, see \cite[§ 3.4.4]{Se}, which is defined as the quotient sheaf $\tpn|_X/\tx$, and therefore satisfies the following exact sequence
\[
0 \to \tx \to \tpn|_X \to N'_{X/\pn} \to 0;
\]
here, $\tx$ denotes the Zariski tangent sheaf of $X$. Note also that $N'_{X/\pn}$
is a subsheaf of the normal bundle $N_{X/\pn} \simeq \bigoplus_{i=1}^k \calo_X(d_i+1)$, so:
\[
N'_{X/\pn} \into \bigoplus_{i=1}^k \calo_X(d_i+1);
\]
the quotient of this monomorphism is denoted by $\txprime$, see \cite[§ 1.1.3]{Se}; it is supported at the singular locus of $X$. For further reference, we write its defining exact sequence:
\begin{equation} \label{txprime}
0 \to N'_{X/\pn} \to \bigoplus_{i=1}^k \calo_{X}(d_i+1) \to \txprime \to 0.    
\end{equation}


The \emph{sheaf of vector fields on $\pn$ tangent to $X$}, denoted by $\txx$, is defined as the kernel of the composed epimorphism $\tpn \onto \tpn|_X \onto N'_{X/\pn}$, yielding the exact sequence
\begin{equation} \label{txx}
0 \to \txx \to \tpn \to N'_{X/\pn} \to 0.
\end{equation}

The main motivation for introducing $\txx$ is given by \cite[Proposition 3.4.17]{Se}; namely, $H^1(\txx)$ and $H^2(\txx)$ are the tangent space and the obstruction space of the semi-universal space of locally trivial deformations of the embedding $X \hookrightarrow \pn$. Here we show that $\calt_\sigma(1)$ is a subsheaf of $\txx$, and, in addition, we describe, to a certain extent, the quotient sheaf $\txx/\calt_\sigma(1)$.

\medskip

Since the forms $f_1,\ldots,f_k$ generate the homogeneous ideal of $X$ in $\pn$, we may view $\tilde \sigma=((d_1+1)f_1,\dots,(d_k+1)f_k)^{\rm t}$ as a morphism 
$\calo_\pn \to \bigoplus_{i=1}^k \cali_{X}(d_i+1)$.
We define a torsion-free sheaf $\calv_\sigma=\coker(\tilde \sigma)$ fitting into:
\[
0 \to \calo_\pn \to \bigoplus_{i=1}^k \cali_{X}(d_i+1) \to \calv_\sigma \to 0.
\]

Note that, when $k=1$, we have $\calv_\sigma=0$, as $\cali_X(d_1+1)\simeq \calo_{\pn}$ in this case, so:
\[
\calt_\sigma(1) \simeq \txx, \qquad \mbox{for $k=1$}.
\]

For $k \ge 2$ the relationship between the two sheaves $\calt_\sigma(1)$ and
$\txx$ is expressed by the following lemma.

\begin{lemma} \label{five terms}
We have an exact sequence:
\begin{equation} \label{long one}
0 \to \calt_\sigma(1) \to \txx \to \calv_\sigma \to \calq_\sigma(1) \to \txprime \to 0.    
\end{equation}
\end{lemma}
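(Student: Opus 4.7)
My plan is to produce the sequence by realising $\calv_\sigma$, $\caln_\sigma(1)$ and the image of $\tpn$ as three subsheaves of $\cals_\sigma(1)$, and then splicing two shorter exact sequences at their intersection.

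First, twisting the bottom row of \eqref{diag2} by $1$ gives $0 \to \calt_\sigma(1) \to \tpn \to \caln_\sigma(1) \to 0$, while the twist of \eqref{diag3} yields $0 \to \caln_\sigma(1) \to \cals_\sigma(1) \to \calq_\sigma(1) \to 0$; this realises $\caln_\sigma(1)$ as a first subsheaf of $\cals_\sigma(1)$. Next, the twist of \eqref{cals} by $1$ reads $0 \to \opn \to \bigoplus_{i=1}^k \opn(d_i+1) \to \cals_\sigma(1) \to 0$. Combining it with the defining sequence of $\calv_\sigma$ and the restriction sequence $0 \to \bigoplus_i \cali_X(d_i+1) \to \bigoplus_i \opn(d_i+1) \to \bigoplus_i \calo_X(d_i+1) \to 0$, a straightforward snake lemma chase produces
\[
0 \to \calv_\sigma \to \cals_\sigma(1) \to \bigoplus_{i=1}^k \calo_X(d_i+1) \to 0,
\]
realising $\calv_\sigma$ as a second subsheaf of $\cals_\sigma(1)$.

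The key step is to identify the image of the composition $\tpn \to \cals_\sigma(1) \to \bigoplus_{i=1}^k \calo_X(d_i+1)$. Unwinding \eqref{diag1} via the Euler morphism, this map sends a local vector field $v = \sum_j g_j \partial_j$ to $(v(f_1),\ldots,v(f_k))\bmod \cali_X$; by definition of $N'_{X/\pn}$ as the image of $\tpn|_X$ inside the normal bundle $\bigoplus_i \calo_X(d_i+1)$, this image is exactly $N'_{X/\pn}$. Hence the kernel of this composed map is $\txx$ by \eqref{txx}, i.e. $\txx$ is the preimage of $\calv_\sigma$ under $\tpn$. Since $\tpn \to \cals_\sigma(1)$ has kernel $\calt_\sigma(1)$ and image $\caln_\sigma(1)$, restricting to $\txx$ gives
\[
0 \to \calt_\sigma(1) \to \txx \to \calv_\sigma \cap \caln_\sigma(1) \to 0,
\]
the intersection being taken inside $\cals_\sigma(1)$.

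Finally, I would restrict the projection $\cals_\sigma(1) \onto \calq_\sigma(1)$ to $\calv_\sigma$: its kernel is $\calv_\sigma \cap \caln_\sigma(1)$, and its cokernel is $\cals_\sigma(1)/(\calv_\sigma+\caln_\sigma(1))$, which computed via the other short exact sequence equals $\bigl(\bigoplus_i \calo_X(d_i+1)\bigr)/N'_{X/\pn} = \txprime$ by \eqref{txprime}. Splicing the resulting four-term sequence $0 \to \calv_\sigma \cap \caln_\sigma(1) \to \calv_\sigma \to \calq_\sigma(1) \to \txprime \to 0$ with the three-term sequence of the previous paragraph at $\calv_\sigma \cap \caln_\sigma(1)$ yields \eqref{long one}. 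The only point demanding some care is the compatibility between the two descriptions of the map $\tpn \to \bigoplus_i \calo_X(d_i+1)$—one passing through $\cals_\sigma(1)$, the other through $\tpn|_X \to N'_{X/\pn}$—which is settled by tracking the Euler morphism through \eqref{diag1}; everything else is a bookkeeping exercise among the sheaves already introduced.
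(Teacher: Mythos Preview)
Your proof is correct and follows essentially the same route as the paper's. The paper builds a commutative diagram with columns $\calo_\pn \to \calo_\pn^{\oplus(n+1)}(1) \to \tpn$ and $\bigoplus_i \cali_X(d_i+1) \to \bigoplus_i \calo_\pn(d_i+1) \to \bigoplus_i \calo_X(d_i+1)$, identifies the bottom horizontal map as $\varpi\colon \tpn \to \bigoplus_i \calo_X(d_i+1)$ with image $N'_{X/\pn}$, and invokes the snake lemma to read off \eqref{long one} directly from the cokernels $\calv_\sigma$, $\calq_\sigma(1)$, $\txprime$. Your argument is the same snake lemma unpacked by hand: realising $\calv_\sigma$ and $\caln_\sigma(1)$ as subsheaves of $\cals_\sigma(1)$ and splicing at their intersection is exactly what the connecting homomorphism does. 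The key identification in both proofs is that the composite $\tpn \to \cals_\sigma(1) \to \bigoplus_i \calo_X(d_i+1)$ coincides with $\varpi$; your local computation $v \mapsto (v(f_1),\ldots,v(f_k))\bmod \cali_X$ makes this slightly more explicit than the paper's assertion.
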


\begin{proof}
We use the Koszul syzygy sheaf $\cals_\sigma$ of Subsection \ref{sec:sqc-dist} to write the following exact sequence relating $\cals_\sigma$ and $\calv_\sigma$:
\[
0 \to \calv_\sigma \to \cals_\sigma(1) \to \bigoplus_{i=1}^k \calo_{X}(d_i+1) \to 0.
\]
We get a commutative diagram:
\[
\begin{split} \xymatrix@-2.5ex{
&& 0 \ar[d] & 0 \ar[d] \\
&0 \ar[d] \ar[r] & \calo_\pn \ar^-{\tilde \sigma}[r] \ar^-\eta[d] & \bigoplus_{i=1}^k \cali_{X}(d_i+1) \ar[d]\\
0 \ar[r] & \calt_\sigma(1) \ar[d]\ar[r] & \calo_{\pn}^{\oplus(n+1)}(1)\ar[d] \ar[r]^-{\nabla\sigma(1)} & \bigoplus_{i=1}^k \calo_{\pn}(d_i+1) \ar[d]\\
0 \ar[r] & \txx \ar[r] & \tpn \ar[r]^-{\varpi} \ar[d] &   \bigoplus_{i=1}^k \calo_{X}(d_i+1) \ar[d] \\
&& 0 & 0
} \end{split}
\]
where $\varpi$ is given by the composition $\tpn\onto N'_{X/\pn} \into \bigoplus_{i=1}^k \calo_{X}(d_i+1)$. The exact sequence in display \eqref{long one} is then obtained via the snake lemma, since $\calv_\sigma:=\coker(\tilde \sigma)$, $\calq_\sigma:=\coker(\nabla\sigma)$ and $T'_X:=\coker(\varpi)$.
\end{proof}

\subsubsection{Tangent vector fields along hypersurfaces}

We look at the relationship between $\calt_\sigma$ and the tangent vector field to one of the hypersurfaces defining $\sigma$.

\begin{lemma}
We have:
\begin{equation} \label{intersection}
\calt_\sigma = \bigcap_{j=1}^k \calt_{f_j}.
\end{equation}
Further, for each $j\in\{1,\dots,k\}$, set  $Z_j=\sing\big(\VV(f_j)\big)$. Then there is an exact sequence:
\begin{equation}\label{sub tf sqc}
0 \to \calt_\sigma \to \calt_{f_j} \to \bigoplus_{i\in \{1,\ldots,k\} \setminus \{j\}} \calo_{\pn}(d_i) \to \calq_\sigma \to \calo_{Z_j}(d_j) \to 0.
\end{equation}
\end{lemma}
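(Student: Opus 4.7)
The plan is to deduce both statements directly from the identity \(\calt_\sigma=\ker(\calj_\sigma)\) together with the observation that the rows of \(\calj_\sigma\) are precisely \(\nabla f_1,\ldots,\nabla f_k\). For \eqref{intersection} this is immediate: a local section of \(\opn^{\oplus n+1}\) lies in \(\ker(\calj_\sigma)\) exactly when each of the scalar pairings \(\nabla f_j\cdot(-)\) vanishes, which is the condition for membership in \(\bigcap_j \calt_{f_j}\).

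For \eqref{sub tf sqc}, fix \(j\) and let \(\pi_j\colon \bigoplus_{i=1}^k\opn(d_i)\onto \opn(d_j)\) denote the canonical projection, whose kernel is \(\bigoplus_{i\ne j}\opn(d_i)\). Since \(\pi_j\circ\calj_\sigma=\nabla f_j\), the defining sequence \eqref{tn-sec} for \(\sigma\) maps to its analogue for the singleton \((f_j)\) via the identity on \(\opn^{\oplus n+1}\) and the restriction \(\pi_j|_{\calm_\sigma}\colon \calm_\sigma\to \im(\nabla f_j)\) on the right. This restriction is surjective by construction, with kernel \(\calm_\sigma\cap\bigoplus_{i\ne j}\opn(d_i)\). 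Euler's identity \((d_j+1)f_j=\sum_\ell x_\ell\partial_\ell f_j\), valid in characteristic zero, places \(f_j\) into the Jacobian ideal of \(f_j\); this identifies \(\im(\nabla f_j)\) with \(\cali_{Z_j}(d_j)\) and the cokernel in \(\opn(d_j)\) with \(\calo_{Z_j}(d_j)\). Applying the snake lemma to the resulting morphism of short exact sequences then yields
\[
0\to\calt_\sigma\to\calt_{f_j}\to\calm_\sigma\cap\bigoplus_{i\ne j}\opn(d_i)\to 0.
\]

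To splice this three-term sequence into the desired five-term \eqref{sub tf sqc}, I compose the inclusion \(\calm_\sigma\cap\bigoplus_{i\ne j}\opn(d_i)\into\bigoplus_{i\ne j}\opn(d_i)\) with the map \(\bigoplus_{i\ne j}\opn(d_i)\into\bigoplus_{i=1}^k\opn(d_i)\onto\calq_\sigma\). The kernel of this composition is exactly \(\calm_\sigma\cap\bigoplus_{i\ne j}\opn(d_i)\), matching the third term above, while its cokernel is computed by the nine lemma applied to the split inclusion \(\bigoplus_{i\ne j}\opn(d_i)\into\bigoplus_{i=1}^k\opn(d_i)\) with quotient \(\opn(d_j)\): quotienting by \(\calm_\sigma\) on the bottom row yields \(\opn(d_j)/\im(\nabla f_j)\simeq\calo_{Z_j}(d_j)\). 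Concatenation of the two resulting exact sequences gives \eqref{sub tf sqc}. The only step that is not pure diagram chasing is the Euler-relation identification \(\im(\nabla f_j)=\cali_{Z_j}(d_j)\); I expect this to be the main point requiring care, as it is where the assumption \(\mathrm{char}(\kappa)=0\) (or at least \(\mathrm{char}(\kappa)\nmid d_j+1\)) actually enters.
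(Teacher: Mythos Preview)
Your proof is correct and follows essentially the same route as the paper: both exploit the factorization \(\pi_j\circ\calj_\sigma=\nabla f_j\) and the identification \(\im(\nabla f_j)=\cali_{Z_j}(d_j)\), then obtain \eqref{sub tf sqc} by a snake-lemma argument on the resulting diagram. The only difference is organizational—the paper packages the chase into a single diagram (columns \(0\to\calt_{f_j}\to\opn^{\oplus n+1}\to\cali_{Z_j}(d_j)\to 0\) and \(0\to\bigoplus_{i\ne j}\opn(d_i)\to\bigoplus_i\opn(d_i)\to\opn(d_j)\to 0\), horizontal map \(\calj_\sigma\)) and invokes the snake lemma once, whereas you split it into a snake step followed by a nine-lemma step; the content is the same.
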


\begin{proof}
For any $j \in \{1,\ldots,n\}$, we have:
\[
\calt_{f_j} = \ker \Big(\nabla f_j \colon \calo_{\pn} ^{\oplus (n+1)} \to \calo_{\pn}(d_j) \Big).
\]

Therefore, since $\calt_\sigma$ is defined as kernel of the matrix obtained by stacking $\nabla(f_1), \ldots, \nabla(f_k)$, we get \eqref{intersection}.
Next, for any $j \in \{1,\ldots,k\}$, we have the following commutative diagram:
\begin{equation}\label{sub tf}
\begin{split} \xymatrix{ 
& & 0 \ar[d] & 0 \ar[d]  \\
0\ar[r] & \calt_\sigma\ar[r]\ar@{=}[d] & \calt_{f_j} \ar[d] \ar[r] & \bigoplus_{i\in \{1,\ldots,k\} \setminus \{j\}} \calo_{\pn}(d_i) \ar[d] \\
0\ar[r] & \calt_\sigma \ar[r] & \opn^{\oplus n+1} \ar[r]^-{\nabla\sigma}\ar[d]^{\nabla f_i} & \bigoplus_{i=1}^k \calo_{\pn}(d_i) \ar[d] \\
 &  & \cali_{Z_j}(d_j) \ar[r]\ar[d] & \opn(d_j) \ar[d] \\
& & 0 & 0
} \end{split}
\end{equation}
Since $Z_j$ is the Jacobian scheme of $f_j$, the completion of \eqref{sub tf} via the snake lemma leads to \eqref{sub tf sqc}.
\end{proof}

These observations will play an important role in the proof of Theorem \ref{high deg} below.


\subsection{Compressibility}

Next, we introduce the following definition.

\begin{definition}\label{comp}
We say that an algebraically independent sequence \(\sigma\) is \emph{compressible} if, up to a linear coordinate change, there is a variable that occurs in none of the forms \(f_1,\ldots,f_k\). An algebraically independent sequence that is not compressible is called \emph{incompressible}.
\end{definition}

\begin{lemma} \label{compressibility}
An algebraically independent sequence \(\sigma\) is compressible if and only if \(H^0(\calt_\sigma)\ne 0\).
\end{lemma}

\begin{proof}
The condition \(H^0(\calt_\sigma) \ne 0\) does not depend on the given choice of a system of coordinates. If none of the forms \(f_1,\ldots,f_k\) depends on a given variable, then all partial derivatives of \(f_1,\ldots,f_k\) with respect to this variable are zero. This means that $\nabla\sigma$ contains a column containing only 0; thus, the kernel sheaf \(\calt_\sigma\) contains a copy of \(\calo_\pn\).

Conversely, assume \(H^0(\calt_\sigma)\ne 0\).
For all \((i,j) \in \N^2\) with \(1 \le i \le k\) and \(0 \le j \le n\), we set:
\[
f_{i,j}= \frac{\partial f_i}{\partial x_j} \in R.
\]
Since the sheaf \(\calt_\sigma\) satisfies \(H^0(\calt_\sigma)\ne 0\), there is a non-zero vector \((b_0,\ldots,b_n) \in \kappa^{n+1}\) such that:
\[
b_0 f_{i,0} + \cdots + b_n f_{i,n} = 0, \qquad \mbox{for all \(1 \le i \le k\).}
\]
Then we define new coordinates \((x_0',\ldots,x_n')\) by choosing an invertible matrix \((a_{i,j})\) of size \(n+1\) with the condition \(a_{j,0}=b_j\) for all \(0 \le j \le n\) and putting:
\[
x_j = \sum_{\ell = 0}^n a_{j,\ell} x_\ell', \qquad \mbox{for all \(0 \le j \le n\).}
\]

Then, for all \mbox{for all \(1 \le i \le k\)}, we have:
\[
\frac{\partial f_i}{\partial x_0'} = \sum_{j=0}^n \frac{\partial x_j}{\partial x_0'}  f_{i,j} = \sum_{j=0}^n b_j  f_{i,j} = 0.
\]
Therefore, in the new coordinates \((x_0',\ldots,x_n')\), none of the forms appearing in \(\sigma\) depends on \(x_0'\).
\end{proof}

The \textit{compressibility} of $\sigma=\{f_1,\ldots,f_k\}$ is defined as the number of independent variables that can be removed from the polynomials \(f_i\), in a suitable coordinate system. In other words, \(\sigma\) has compressibility \(m\) if and only if \(h^0(\calt_\sigma)=m\); note that  \(0 \le m \le n-1\). We set \(\hat n:=n-m\); it indicates the minimal number of variables where $\sigma$ is defined.

\begin{lemma} \label{split compressible}
If $\sigma$ is a compressible algebraically independent sequence in $R$, then there is an incompressible sequence $\hat{\sigma}$ in $\kappa[x_0,\dots,x_{\hat{n}}]$ and a $\hat{n}$-dimensional linear space $L\subset\pn$ such that 
$\calt_\sigma=\opn^{\oplus(n-\hat{n})}\oplus\cale$ where $\cale|_L\simeq\calt_{\hat{\sigma}}$.  
\end{lemma}

\begin{proof}
Assume that $\sigma=(f_1,\dots,f_k)$ is compressible, so that \(m:=h^0(\calt_\sigma)>0\); set $\hat n:=n-h^0(\calt_\sigma)$. We get a monomorphism $\opn^{\oplus m}\into\calt_\sigma$ which we compose with $\calt_\sigma \into \opn^{\oplus (n+1)}$; one can then find an epimorphism $\opn^{\oplus (n+1)} \onto \opn^{\oplus m}$ such that the following composition
$$ \opn^{\oplus m}\into\calt_\sigma \into \opn^{\oplus (n+1)} \onto \opn^{\oplus m} $$
is the identity morphism; it follows that $\calt_\sigma=\opn^{\oplus(n-\hat{n})}\oplus\cale$, where the sheaf $\cale$ fits in the exact sequence
\begin{equation}\label{mu-map}
0 \to \cale \to \opn^{\oplus(\hat{n}+1)} \stackrel{\mu}{\to} \opn(d-1)^{\oplus k} ~~.    
\end{equation}
As we have seen in the proof of Lemma \eqref{compressibility}, there are new coordinates $(x_0':\dots:x_n')$ such that the variables $x_0',\dots,x_{m-1}'$ do not appear in the polynomials $f_i\in\sigma$.

This means that the first $m$ columns of the Jacobian matrix consist only of zeros and that the matrix $\mu$ in display \eqref{mu-map} is precisely the submatrix of trivial columns of $\nabla\sigma$. 

In addition, $\sigma$ can be regarded as a sequence in $\kappa[x_{m+1}',\dots,x'_{\hat{n}}]$, which we rename $\hat{\sigma}$. Setting $L=\VV(x_0',\cdots,x_{m-1}')$, we have that $\mu|_L=\nabla_{\hat{\sigma}}$, thus $\cale|_L\simeq\calt_{\hat{\sigma}}$.
\end{proof}

As an immediate consequence, we have:

\begin{corollary} \label{never}
The logarithmic tangent sheaf of a compressible algebraically independent sequence of length $k \le n-1$ is never slope-stable.
\end{corollary}

Recall that a coherent subsheaf \(\calf\) of a coherent sheaf \(\cale\) is \textit{saturated} if \(\cale/\calf\) is torsion-free. The following technical observation will be useful later on.

\begin{lemma} \label{sub of incompressible}
\label{double planes}
If $\sigma$ is an incompressible algebraically independent sequence, then every saturated subsheaf $\calk\subset\calt_\sigma$ satisfies $c_1(\calk)<0$.
In particular, when $k<n$ and $c_1(\calt_\sigma)=-1$, $\Ts$ is slope-stable if and only if $\sigma$ is incompressible.
\end{lemma}

\begin{proof}
Any saturated non-zero rank-$r$ subsheaf $\calk$ of $\calt_\sigma$ is also a saturated subsheaf of $\opn^{\oplus n+1}$, which is a slope polystable sheaf. It follows that $c_1(\calk)\le0$, and if $c_1(\calk)=0$, then  
\cite[Corollary 1.6.11]{HL} implies that $\calk=\opn^{\oplus r}$, so $\sigma$ is compressible.
This proves that $c_1(\calk) \le -1$ when $\sigma$ is incompressible.

For $k < n$, Corollary \ref{never} says that $\calt_\sigma$ is unstable as soon as $\sigma$ is compressible.
Conversely, let $\sigma$ be incompressible with $c_1(\calt_\sigma)=-1$ and consider a rank-$r$ subsheaf $\calk$ of $\calt_\sigma$, with 
$r \le n-k$. Then we just proved that $c_1(\calk) \le -1$ so:
$$ \frac{c_1(\calk)}{r} \le \frac{-1}{r} < \frac{-1}{n+1-k}.$$
This implies that $\calt_\sigma$ is slope-stable.
\end{proof}

\begin{example}\label{free not strong}
Here is an example of a free regular sequence that is not strongly free. Consider the following regular sequences in $R=\kappa[x_0,x_1,x_2,x_3]$
$$ \sigma:=(x_0x_1,g) ~~ {\rm and}~~ \sigma':=(x_0x_1,x_0^2x_1+g); $$ 
where $g$ is a polynomial of degree 3 depending only on $x_2$ and $x_3$; assume that $\partial_2g$ and $\partial_3g$ have no common factors, so that 
$\nabla g$ has no syzygies of degree $<2$. 

Clearly, $I_\sigma=I_{\sigma'}$. We argue that $\sigma$ is free, while $\sigma'$ is not. Indeed, their Jacobian matrices are given by:
$$ \nabla\sigma = \left( \begin{array}{cccc}
x_1 & x_0 & 0 & 0 \\
0 & 0 & \partial_2g & \partial_3g  
\end{array}\right) \quad {\rm and} \quad \nabla_{\sigma'} = \left( \begin{array}{cccc}
x_1 & x_0 & 0 & 0 \\
2x_0x_1 & x_0^2 & \partial_2g & \partial_3g  
\end{array}\right).
$$
Note that $\nabla\sigma$ has two independent syzygies, given by 
$$ \nu_1=(-x_0,x_1,0,0) ~~ {\rm and} ~~ \nu_2=(0,0,\partial_3g,-\partial_2g) $$ 
of degrees 1 and 2, respectively. Therefore, we have a monomorphism
$$ \nu \colon \op3(-1)\oplus\op3(-2)\into\calt_\sigma $$
whose cokernel, being a subsheaf of $\cali_{L}(1)\oplus\cali_{C}(2)$ with $L=\VV(x_0,x_1)$ and $C=\VV(\partial_2g,\partial_3g)$, must be torsion-free. It follows that $\nu$ must be an isomorphism, thus $\calt_\sigma\simeq \op3(-1)\oplus\op3(-2)$.

To see that $\calt_{\sigma'}$ does not split as a sum of line bundles, note that $\nu_2$ is also a syzygy for $\nabla_{\sigma'}$, thus $h^0(\calt_{\sigma'}(2))>0$. On the other hand, since $\nabla_{\sigma'}$ has no syzygy of degree $\le1$, we have that $h^0(\calt_{\sigma'}(1))=0$. 
In addition, the minors appearing in $\bigwedge^2\nabla_{\sigma'}$ have no common factor, thus $c_1(\calq_{\sigma'})=0$ and $c_1(\calt_{\sigma'})=-c_1(\calm_{\sigma'})=-3$.  Thus if 
$\calt_{\sigma'}=\op3(a)\oplus\op3(b)$ with $a\le b$, then $a+b=-3$, and $a,b\le-2$, which is impossible.

In fact, note that $\big(\Xi_{\sigma'}\big)_{\rm red}$ consists of the line $\VV(x_0,x_1)$ together with the following 0-dimensional schemes:
$$ \VV(x_0,\partial_2g,\partial_3g)  \qquad \mbox{and} \qquad \VV(x_1,\partial_2g,\partial_3g), $$
each of length equal to 4. Therefore, $(\Xi_{\sigma'})_{\rm red}$ contains at least two irreducible components of codimension 3; the second item of Lemma \ref{lf no codim 3} implies that $\calt_{\sigma'}$ is not locally free.
\end{example}

\begin{example}\label{strong}
We show that the regular sequence $\sigma=(x_0,x_3^2)$ in $R=\kappa[x_0,x_1,x_2,x_3]$ is a strongly free sequence consisting of polynomials of different degrees.
This example shows that, in general, $\det(\calt_\sigma)$ is not fixed and may change with the choice of generators for $I_\sigma$.

Any algebraically independent sequence $\sigma'$ such that $I_\sigma=I_{\sigma'}$ must be of the form $\sigma'=(\alpha x_0,x_0l+\beta x_3^2)$ for some linear form $l\in H^0(\op3(1))$ and $\alpha,\beta\in\kappa^*$. Setting $l=ax_0+bx_1+cx_2+dx_3$, the Jacobian matrix for $\sigma'$ is given by 
$$
\nabla_{\sigma'} = \left( \begin{array}{cccc}
\alpha & 0 & 0 & 0 \\
2ax_0 & bx_0 & cx_0 & dx_0+2\beta x_3  
\end{array}\right).
$$

If $c\ne0$, then 
$$ \nu_1=(0,-c,b,0) \qquad \mbox{and} \qquad \nu_2=(0,0,dx_0+2\beta x_3,-cx_0) $$ 
are independent syzygies of degrees 0 and 1, respectively. Following the argument in Example \ref{free not strong}, so we can conclude that $\calt_{\sigma'}=\op3\oplus\op3(-1)$.

When $c=0$ and $b\ne0$ then 
$$ \nu_1=(0,0,1,0)  ~~ {\rm and} ~~ \nu_2=(0,dx_0+2\beta x_3,0,-bx_0) $$
are independent syzygies of degree 0 and 1, respectively, so again we conclude that $\calt_{\sigma'}=\op3\oplus\op3(-1)$.

Finally, if $b=c=0$, then $\nu_1=(0,0,1,0)$ and $\nu_2=(0,1,0,0)$ are independent syzygies of degree 0, thus $\calt_{\sigma'}=\op3^{\oplus2}$ and $\sigma$ has compressibility $2$.

It is important to observe that even though $\calt_{\sigma'}$ always splits as a sum of line bundles, the splitting type is not always the same.
\end{example}

The following lemma settles the case of families of maximal compressibility.

\begin{lemma} \label{zero}
Let $\sigma$ be an algebraically independent sequence of homogeneous polynomials of degree $(d_1+1,\ldots, d_k+1)$, with $k \le n$. Then the following conditions are equivalent:
\begin{enumerate}[label=\roman*)]
\item \label{zero-i} $c_1(\calt_\sigma)=0$,
\item \label{zero-ii} $\calt_\sigma \simeq \calo_{\p n}^{\oplus(n+1-k)}$,
\item \label{zero-iii} $\sigma$ has a singular divisor of degree $d_1+\cdots+d_k$,
\item \label{zero-iv} there is a choice of linear coordinates of $\p n$ such that $\sigma$ depends only on $(x_0,\ldots,x_{k-1})$.
\end{enumerate}
\end{lemma}
\begin{proof}
We have \ref{zero-ii} $\Rightarrow$ \ref{zero-i}. Conversely, assuming \ref{zero-i}, 
again by  
\cite[Corollary 1.6.11]{HL} we get \ref{zero-ii}. Lemma \ref{split compressible} gives \ref{zero-ii} $\Leftrightarrow$ \ref{zero-iv}.
The degree $d$ of the singular divisor of $\sigma$ equals $c_1(\calq_\sigma)$ so:
\begin{equation}
\label{d vs c1}    c_1(\calt_\sigma) = d-\sum_{i=1}^k d_i.
\end{equation}
Hence, \ref{zero-i} implies \ref{zero-iii}. Conversely, assuming \ref{zero-iii} we get $c_1(\calt_\sigma) \ge 0$ and since  $c_1(\calt_\sigma) \le 0$, we get \ref{zero-i}.
\end{proof}

\subsection{Webs}\label{webs}

Fix integers \(d \ge 0\) and \(k \ge 1\) and let \(\sigma=(f_1,\ldots,f_k)\) be an algebraically independent sequence of forms of degree \(d+1\) in \(R=\kappa[x_0,\ldots,x_n]\); we call \(\sigma\) a \emph{\(k\)}-web in \(\pn\); a 2-web is usually called a \emph{pencil}. A \emph{\(k\)}-web is \emph{regular} if there is $(z_1,\ldots,z_k)  \in\kappa^k$ such that the hypersurface $\VV(\sum_{i=1}^k z_i f_i)$ is non-singular; a $k$-web that is not regular is called \emph{irregular}. Note that regular $k$-webs are incompressible.

In this section, we establish some basic properties of logarithmic tangent sheaves associated with $k$-webs, which will be useful later on. 

\subsubsection{Freeness of webs}

Here is the first fundamental fact.

\begin{lemma}\label{lem:k-web}
Let $\sigma$ be a $k$-web. If $\sigma$ is free, then it is strongly free. 
\end{lemma}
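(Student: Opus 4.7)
The plan is to show that for a $k$-web, any regular sequence $\sigma'$ with $I_{\sigma'} = I_\sigma$ is obtained from $\sigma$ by a scalar change of basis, from which one immediately concludes that $\calt_{\sigma'} = \calt_\sigma$ as subsheaves of $\opn^{\oplus n+1}$, so freeness transfers.

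First I would pin down the structure of $\sigma'$. Since $V(\sigma') = V(\sigma)$ has codimension $k$ and $\sigma'$ is a regular sequence of length $k$, it constitutes a minimal system of generators of $I_\sigma$. Because $I_\sigma$ is generated in the single degree $d+1$ (as $\sigma$ is a $k$-web), graded Nakayama forces every minimal generator of $I_\sigma$ to live in degree $d+1$; hence $\sigma'$ is itself a $k$-web of the same degree. In particular, each $f_i'$ is a $\kappa$-linear combination of $f_1,\ldots,f_k$ (the coefficients being homogeneous of degree $0$), giving a matrix $A=(a_{ij}) \in \mathrm{Mat}_k(\kappa)$ with $\sigma' = A\sigma$. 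Reversing the roles of $\sigma$ and $\sigma'$ produces the inverse, so $A \in \GL_k(\kappa)$.

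Next I would pass to Jacobians. By linearity of $\nabla$, taking gradients componentwise yields $\calj_{\sigma'} = A \cdot \calj_\sigma$. Viewed as a morphism $\opn^{\oplus n+1} \to \bigoplus_{i=1}^k \opn(d)$, the action of $A$ is an automorphism of the target (all summands have the same twist $d$, which is essential here). Consequently
\[
\calt_{\sigma'} = \ker(\calj_{\sigma'}) = \ker(A \cdot \calj_\sigma) = \ker(\calj_\sigma) = \calt_\sigma
\]
as subsheaves of $\opn^{\oplus n+1}$. If $\calt_\sigma$ splits as a sum of line bundles, then so does $\calt_{\sigma'}$, which is the definition of freeness for $\sigma'$. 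Since $\sigma'$ was arbitrary, $\sigma$ is strongly free.

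There is no substantial obstacle; the argument is essentially one observation. The only point to be careful about is justifying that the change of generators is scalar rather than polynomial, which requires both the equality of degrees intrinsic to $k$-webs and the fact that $\sigma'$ is a \emph{minimal} system of generators. Without equal degrees (cf.\ Example \ref{free not strong}) one cannot rule out polynomial entries in the transition matrix, and indeed freeness need not be preserved in that case.
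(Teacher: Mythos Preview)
Your proof is correct and follows essentially the same approach as the paper: both show that any regular sequence $\sigma'$ with $I_{\sigma'}=I_\sigma$ is related to $\sigma$ by a matrix in $\GL_k(\kappa)$, whence $\calj_{\sigma'}=A\calj_\sigma$ and $\calt_{\sigma'}=\calt_\sigma$. You supply more detail than the paper (which simply asserts ``one can check'' that such a matrix exists), in particular the graded Nakayama argument forcing $\sigma'$ to consist of forms of degree $d+1$; this extra justification is welcome but does not constitute a different method.
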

\begin{proof}
Let $\sigma'=(f_1',\dots,f_k')$ be another algebraically independent sequence such that $I_{\sigma'}=I_\sigma$;
one can check that there is a matrix $P\in \GL_k(\kappa)$ such that
$$ \left( \begin{array}{c} f_1' \\ \vdots \\ f_k' \end{array} \right) = P \left( \begin{array}{c} f_1 \\ \vdots \\ f_k \end{array} \right). $$
It follows that $\nabla_{\sigma'}=P\nabla_{\sigma}$, thus in fact $\calt_{\sigma'}\simeq\calt_{\sigma}$, from which the desired statement follows immediately.
\end{proof}

A particular case of the previous result leads to the simplest example of a strongly free algebraically independent sequence.

\begin{example} \label{linear}
Take a regular sequence $\sigma=(f_1,\dots,f_k)$ such that each $f_i$ is a linear polynomial; note that $\VV(\sigma)$ is a linear subspace of codimension $k$. The Jacobian matrix is a constant matrix of maximal rank, inducing a surjective morphism $\opn^{\oplus n+1}\to\opn^{\oplus k}$. It follows that $\calt_\sigma=\opn^{\oplus n+1-k}$, $\calm_\sigma=\opn^{\oplus k}$, and 
$\calq_\sigma=0$. 
\end{example}

\subsubsection{Webs versus algebraically independent sequences}

Let us point out how to associate a web to any algebraically independent sequence, keeping the logarithmic sheaf unchanged.
Let $\sigma=(f_1,\ldots,f_k)$ be an algebraically independent sequence, with $\deg(f_i)=d_i+1$, for some $d_1+1,\ldots,d_k+1 \in \N$. Let $e$ be the least common multiple of $d_1+1,\ldots,d_k+1$. 
For $i \in \{1,\ldots,k\}$, put $\ell_i=e/(d_i+1)$.
Set:
\[
\tau = (f_1^{\ell_1},\ldots,f_k^{\ell_k}).
\]
Note that $\tau$ is a web of degree $e$.
\begin{lemma}
We have:
\[
\calt_\tau = \calt_\sigma.
\]
\end{lemma}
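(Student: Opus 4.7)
The plan is to factor the Jacobian morphism $\calj_\tau$ through $\calj_\sigma$ via a diagonal morphism of ``multiplication by powers of the $f_i$'s'', and then observe that this diagonal morphism is injective as a map of sheaves.

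More concretely, the chain rule gives, for each $i \in \{1,\ldots,k\}$,
\[
\nabla(f_i^{\ell_i}) = \ell_i \, f_i^{\ell_i-1} \, \nabla f_i,
\]
so the $i$-th row of $\calj_\tau$ equals the $i$-th row of $\calj_\sigma$ multiplied by the form $\ell_i f_i^{\ell_i-1}$ of degree $(\ell_i-1)(d_i+1) = e - d_i - 1$. This produces a factorization
\[
\calj_\tau = D \circ \calj_\sigma,
\]
where $D \colon \bigoplus_{i=1}^k \opn(d_i) \to \opn(e-1)^{\oplus k}$ is the diagonal sheaf morphism whose $i$-th diagonal entry is multiplication by $\ell_i f_i^{\ell_i-1} \in H^0(\opn(e-d_i-1))$.

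The next step is to argue that $D$ is injective. Since $\kappa$ has characteristic zero, each $\ell_i \in \kappa^\ast$. Moreover, each $f_i$ is a nonzero homogeneous polynomial, hence a non-zero-divisor in every stalk of $\opn$ (local rings of a polynomial ring are integral domains). Consequently, multiplication by $\ell_i f_i^{\ell_i-1}$ is an injective endomorphism of the corresponding summand, and $D$ is injective on stalks, hence injective.

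From the factorization and the injectivity of $D$ it follows that $\ker(\calj_\tau) = \ker(\calj_\sigma)$ inside $\opn^{\oplus n+1}$, which is precisely the claim $\calt_\tau = \calt_\sigma$. There is no real obstacle: the only subtleties are verifying that the entries of $D$ live in the right graded pieces (a degree check using $\ell_i(d_i+1)=e$) and noticing that the char-zero hypothesis is what makes the scalars $\ell_i$ invertible so that they do not contribute to the kernel of $D$.
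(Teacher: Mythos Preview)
Your proof is correct and essentially identical to the paper's: both factor $\calj_\tau = D \circ \calj_\sigma$ through the diagonal morphism with entries $\ell_i f_i^{\ell_i-1}$, observe that $D$ is injective (using characteristic zero so that the $\ell_i$ are units), and conclude that the kernels coincide. Your write-up is slightly more explicit about why multiplication by a nonzero form is injective on stalks, but the argument is the same.
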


\begin{proof}
For $i \in \{1,\ldots,k\}$, set $g_i=f_i^{\ell_i}$, so that $\tau = (g_1,\ldots,g_k)$.
By the chain rule, for each 
$i \in \{1,\ldots,k\}$ we have:
\[
\nabla(g_i) = \ell_i f_i^{\ell_i-1} \nabla(f_i).
\]
In other words, considering the morphism defined by the diagonal matrix:
\[
P=\mathrm{diag}(\ell_1f_1^{\ell_1-1}, \ldots,\ell_k f_k^{\ell_k-1}) \colon \bigoplus_{i=1}^k\calo_{\pn}(d_i) \to \calo_{\pn}(e)^{\oplus k},
\]
we get that:
\[
\nabla_\tau = P \circ \nabla\sigma.
\]
Since $\kappa$ is of characteristic zero, $P$ is injective, so $\calt_\tau = \ker(\nabla_\tau)= \ker(\nabla\sigma) = \calt_\sigma$.
\end{proof}


We complete this section with a characterization of the degeneracy locus of $k$-webs as the set of points that are singular for some hypersurface of the web.

\begin{lemma} \label{web deg locus}
Let $\sigma$ be a $k$-web. The reduced degeneracy locus $(\Xi_{\sigma})_{\rm red}$ of the Jacobian matrix $\nabla\sigma$ coincides with the union of the singular loci of the singular hypersurfaces contained in the web.
\end{lemma}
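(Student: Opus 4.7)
The plan is to identify the degeneracy locus with a rank-drop condition and then translate the rank drop into the existence of a singular member of the linear system $\{V(\sum_i z_i f_i) \mid [z_1:\cdots:z_k] \in \p{k-1}\}$. A point $p$ lies in $(\Xi_\sigma)_{\redu}$ exactly when every $k \times k$ minor of $\calj_\sigma$ vanishes at $p$, i.e., when the rows $\nabla f_1(p),\ldots,\nabla f_k(p)$ are linearly dependent vectors of $\kappa^{n+1}$. First I would record this elementary linear-algebra fact, which is the only substantive content on the scheme-theoretic side of the statement.

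Next I would translate the linear dependence into the statement about hypersurfaces. The rows are dependent at $p$ iff there exists $(z_1,\ldots,z_k) \in \kappa^k \setminus \{0\}$ with $\sum_i z_i \nabla f_i(p) = 0$. Since all $f_i$ share the same degree $d+1$ (this is where the $k$-web hypothesis enters crucially), $F := \sum_i z_i f_i$ is a homogeneous form of degree $d+1$, and $\sum_i z_i \nabla f_i = \nabla F$. Moreover, because $\sigma$ is a regular sequence, in particular the $f_i$'s are $\kappa$-linearly independent, so $F$ is not the zero polynomial and thus $V(F)$ is an honest hypersurface in the web.

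Then I would use the Euler relation: in characteristic zero, $(d+1) F(p) = \langle p, \nabla F(p) \rangle$, so $\nabla F(p)=0$ automatically forces $F(p)=0$. Hence $\nabla F(p)=0$ is equivalent to $p \in \sing(V(F))$. Combining: $p \in (\Xi_\sigma)_{\redu}$ iff there exists a nonzero member $V(F)$ of the web with $p \in \sing(V(F))$, which is exactly the membership of $p$ in the union of singular loci of singular hypersurfaces of the web. This gives both inclusions simultaneously.

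There is no real obstacle; the only place one has to be slightly careful is to verify that the form $F$ produced from the dependence relation is nonzero (requiring linear independence of the $f_i$, guaranteed by the regular sequence assumption) and of the expected degree (requiring equidegreeness, guaranteed by the web assumption). These are precisely the two hypotheses that are used, so the statement is genuinely a \emph{web} phenomenon.
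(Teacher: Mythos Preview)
Your proposal is correct and follows exactly the same approach as the paper: identify the degeneracy locus with linear dependence of the gradient rows, then interpret a nontrivial relation $\sum z_i\nabla f_i(p)=0$ as $p\in\sing\big(V(\sum z_if_i)\big)$. You are in fact slightly more careful than the paper, explicitly invoking the Euler relation to ensure $F(p)=0$ and the linear independence of the $f_i$ to ensure $F\not\equiv 0$.
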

\begin{proof}
Set $\sigma=(f_1,\ldots,f_k)$. A point $x\in\pn$ belongs to $(\Xi_{\sigma})_{\rm red}$ if and only if the gradients of $f_i$ are linearly dependent, that is, there is $(z_1,\ldots,z_k) \in \kappa^{n+1} \setminus \{0\}$ such that:
$$ \nabla\left(\sum_{i=1}^k z_if_i\right)(x) = \sum_{i=1}^k z_i\nabla f_i(x) = 0. $$
But this is the same as saying that $x$ lies in the singular locus of $\VV(\sum_{i=1}^k z_if_i)$. 
\end{proof}


\section{Basic material on pencils of quadrics} \label{sec:stable-pencils}

In this section and up to \ref{locally free pencils}, \(\kappa\) is an algebraically closed of characteristic different from 2.
We consider pencils of quadrics, namely algebraically independent families $\sigma$ of two homogeneous polynomials $f_1,f_2$ of degree $2$ on $\p n$.
We develop here the basic material needed for Sections up to \ref{locally free pencils} as well, including a short mention of Segre symbols.

The classification of regular pencils of quadrics was given by Weierstrass in \cite{W} and interpreted geometrically by Segre \cite{Segi,Segii}. Weierstrass completed the work of Weierstrass by studying irregular pencils, see \cite{K68}. Segre again gave a geometric interpretation in \cite{Segiii}.
For a modern treatment, the reader may consult for instance \cite[Chapter XII]{G}, \cite[Section 10.6]{L},
\cite[§XIII.10]{HP} or the more recent paper \cite{FMS}.

In the whole section, we focus on algebraically independent sequences $\sigma=(f_1,f_2)$ such that $\deg(f_1)=\deg(f_2)=2$, to which we can associate the pencil of quadrics $Q_\lambda:=V\big(z_1 f_1 + z_2 f_2\big)$, where $\lambda=[z_1:z_2]\in\p1$.
Given a quadric hypersurface \(Q\) in \(\pn\), we denote by \(\rk(Q)\) the rank of the Hessian matrix of an equation of \(Q\), that is,  the rank of a quadratic form associated with \(Q\). We set \(\cork(Q)=n+1-\rk(Q)\). When non-empty, the singular locus of \(Q\) is a linear subspace of \(\pn\) of dimension \(\cork(Q)-1\). The quadric \(Q\) is a double plane if and only if \(\rk(Q)=1\).

\subsection{The classification of pencils of quadrics}

Considering the polarization (or Hessian) matrix of the quadrics in the pencil \(\sigma\) we obtain a \textit{pencil of symmetric matrices} of size \(n+1\):
\[
\rho_\sigma : \calo_{\p 1}(-1)^{\oplus (n+1)} \to \calo_{\p 1}^{\oplus (n+1)}.
\]
Conversely, from a pencil of symmetric matrices of size $n+1$, we recover a pencil of quadrics on $\p n$.
\begin{definition}
Given a pencil of quadrics $\sigma$ and $\lambda \in \p 1$ we set
$r(\lambda)=\cork(Q_\lambda)$. We put
\[r_1 := \min\big\{r(\lambda) \mid \lambda \in \p 1\big\}.
\]
We call \(r_1\) the \textit{generic corank} of \(\sigma\). 
We say that $\sigma$ (or $\rho_\sigma$) is \textit{regular} if $r_1=n+1$ and that $\sigma$ is \textit{irregular} otherwise. We say that $\sigma$ is \textit{completely irregular} if $r_1=r_0$.
\end{definition}

\subsubsection{The classification of pencils of symmetric matrices}
Let $\sigma$ be a pencil of quadrics in $\p n$ and put $m$ for the compressibility of $\sigma$.
In view of the classification of pencils of quadrics (cf. \cite[Chapter XII]{G}), there exist integers $p \le n$, $1 \le c_1 \le \cdots \le c_{r_1-m}$, a regular pencil $\bar \rho_\sigma$ of size $p+1$ and a suitable system of linear coordinates of $\p n$ such that $\rho_\sigma$ can be written in block diagonal form
\begin{equation} \label{bloc-decomposition}
\rho_\sigma = \diag(\zeta(c_1),\cdots,\zeta(c_{r_1-m}),\bar \rho_\sigma,0_m),  
\end{equation}
where $0_m$ is the zero matrix of size $m$ and, for any integer $c \ge 1$, 
\[
\zeta(c) = \left(\begin{array}{c|c}
    0 & \tau(c) \\
    \hline
    \tau(c)^\t & 0
\end{array}\right), 
\]
where 
$\tau(c) : \calo_{\p 1}(-1)^{\oplus c} \to \calo_{\p 1}^{\oplus c+1}$ reads as follows
\[\tau(c)=\begin{pmatrix}
    z_1 & 0 & \cdots & 0 \\
    z_2 & z_1 & 0 & \vdots \\    
    0 & z_2 & \ddots & 0\\
    \vdots & & \ddots & z_1 \\
    0 & \cdots & 0 & z_2
\end{pmatrix}.
\]

Here, we allow $p=-1$, in which case $\bar \rho_\sigma$ does not occur. For any integer \(c \in \N\) we let \(V_c\) be the irreducible representation \(\SL_2(\kappa)\) of weight \(c\), so \(V_0\) is the trivial representation of rank 1, \(V_1\) is the standard representation of rank 2, while \(V_c=S^c V_1\) has rank \(c+1\). By convention we set \(V_{-1}=0\). 
Then $\tau(c)$ and $\zeta(c)$ can be rewritten is  \(\SL_2(\kappa)\)-equivariant form:
\begin{gather} \label{SL2_zeta}
\nonumber 0 \to V_{c-1} \otimes \calo_{\p 1}(-1) \xrightarrow{\tau(c)} V_c \otimes 
\calo_{\p 1} \to \calo_{\p 1}(c) \to 0,\\
0 \to \calo_{\p 1}(-c-1) \to (V_{c-1} \oplus V_c) \otimes \calo_{\p 1}(-1) \xrightarrow{\zeta(c)} (V_{c-1} \oplus V_c) \otimes 
\calo_{\p 1}, \to \calo_{\p 1}(c) \to 0.
\end{gather}
and one has:
\[
\im(\zeta(c)) \simeq \calo_{\p 1}(-1)^{\oplus c} \oplus 
\calo_{\p 1}^{\oplus c}.
\]

\begin{definition}
The pencil of quadrics associated with \(\bar \rho_\sigma\) is called the \textit{regular part of \(\sigma\)}.
We put 
\[
v = \sum_{i=1}^{r_1-m} c_i, \qquad 
u = v + p+1.
\]
Recall that here $p+1$ is the size of $\bar \rho_\sigma$. We call \((u,v)\) the \textit{splitting type} of $\sigma$. Of course $u \ge v$. Also:
\[
\im(\rho_\sigma) \simeq \calo_{\p 1}(-1)^{\oplus u} \oplus \calo_{\p 1}^{\oplus v}.
\]
We call $(c_1,\ldots,c_{r_1-m})$ the \textit{degree vector} of $\sigma$. We observe that $\bar \rho_\sigma$ has size $u-v$. Looking at the size of the block decomposition \eqref{bloc-decomposition}, successively $2c_1+1,\ldots,2c_{r_1-m}+1,p+1,m$, we get:
\begin{equation} \label{n+1=}
n+1=2\sum_{i=1}^{r_1-m}c_i+r_1+p+1=u+v+r_1.
\end{equation}

\end{definition}

Note that the regular part of a pencil of quadrics may be empty: this happens for \(u=v\). 
We will call these pencils \emph{completely irregular} and treat them in detail a bit further on. 
Note also that the regular part of an irregular pencil may fail to be a pencil, when $\sigma$ is incompressible this happens for \(u=v+1\). 
By convention, a pencil of symmetric matrices of size 0 (the empty pencil) and a non-zero pencil of symmetric matrices of size 1 are regular.

\begin{lemma} \label{3r1}
An incompressible pencil of quadrics satisfies \(r_1 \le \frac{n+1}3\).
\end{lemma}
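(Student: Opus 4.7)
My plan is to exploit the three inequalities already established in Section \ref{describing irregular pencils}: namely the identity $n+1 = r_1 + u + v$ coming from the splitting type $(u,v)$, the inequality $u \ge v$ proved in the lemma producing the regular part, and the inequality $v \ge r_1$ coming from positivity of the degree vector.

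More concretely, I would first recall from \eqref{K&C} and the definition of the splitting type that $\rk(\rho_\sigma) = u + v$, whence $r_1 = n + 1 - u - v$, i.e.\
\[
n + 1 = r_1 + u + v.
\]
Next, I would invoke the lemma showing $u \ge v$ (obtained from the symmetry of $\rho_\sigma$ and the fact that the constant block extracted from $\bar\rho_\sigma$ has an injective transpose). Finally, I would use the description of the torsion-free part $\calc_\tf \simeq \bigoplus_{i=1}^{r_1} \calo_{\p1}(c_i)$ with $c_i \ge 1$ and $\sum c_i = v$, which immediately gives $v \ge r_1$. Chaining these three facts yields
\[
n + 1 = r_1 + u + v \ge r_1 + v + v \ge r_1 + r_1 + r_1 = 3 r_1,
\]
and the conclusion $r_1 \le (n+1)/3$ follows.

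There is essentially no obstacle here: all three ingredients $n+1 = r_1+u+v$, $u \ge v$ and $v \ge r_1$ have been worked out explicitly in the preceding subsections, and the argument is a one-line combination of them. The only mild subtlety is to remember that incompressibility of $\sigma$ is what guarantees both that $\calc_\sigma$ has no trivial direct summand (so the $c_i$ are genuinely positive) and that the asymmetric reduction of $\bar\rho_\sigma$ leading to $u \ge v$ is legitimate; both points are already in place, so the proof is immediate.
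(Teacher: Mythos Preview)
Your proposal is correct and is essentially identical to the paper's own proof: the paper also uses $n+1 = r_1 + u + v$, $u \ge v$, and $v = \sum c_i \ge r_1$ (the latter from incompressibility forcing $c_i \ge 1$), and chains them as $n+1 \ge r_1 + 2v \ge 3r_1$. One minor remark: the inequality $u \ge v$ in the paper's lemma on the regular part does not actually require incompressibility, so that hypothesis is only needed to ensure the $c_i$ are strictly positive.
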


\begin{proof}
An incompressible pencil satisfies $m=0$ and, since \(c_i \ge 1 \) for $i \in \{1,\ldots,r_1\}$ we get
\(
v = \sum_{i=1}^{r_1} c_i \ge r_1.
\)
Therefore we have \(n+1=u+v + r_1\ge 2v+r_1\ge 3r_1.\)
\end{proof}

\subsubsection{Segre symbols} \label{segre symbols}

The Segre symbol is the key invariant of regular pencils. Indeed, the content of the Segre-Weierstrass theorem is that the set of singular quadrics of a regular pencil together with its Segre symbol classifies the regular pencil up to homography of \(\p1\) and \(\pn\). Again we refer to \cite[§XIII.10]{HP} or \cite{FMS}.
We give a short account of this theory here. 

Let \(\sigma\) be an incompressible pencil of quadrics. Following the notation introduced above, let \((u,v)\) be the splitting type of \(\sigma\), so that its generic corank \(r_1\) satisfies \(n+1=u+v+r_1\). 
Set \(\calc_\sigma := \coker(\rho_\sigma)\) and write
the long exact sequence:
\begin{equation} \label{K&C}
0 \to \ker(\rho_\sigma) \to \calo_{\p 1}(-1)^{\oplus (n+1)} \to 
\calo_{\p 1}^{\oplus (n+1)} \to \calc_\sigma \to 0.    
\end{equation}

The cokernel sheaf \(\calc_\sigma\) decomposes as \(\calc_\sigma \simeq \calc_\tf \oplus \calc_\t\), where \(\calc_\t\)  is its torsion part and \(\calc_\tf\) is its torsion-free part. One has:
\begin{equation} \label{reg part sqc}
\calc_\sigma \simeq \coker(\bar \rho_{\sigma}), \qquad 
h^0(\coker(\bar \rho_{\sigma}))=u-v.
\end{equation}

Let $\{\lambda_1,\ldots,\lambda_\ell\} \subset \p1$ be the support of the torsion sheaf \(\calc_\t\). We have:
\[
\calc_\t \simeq \bigoplus_{j=1}^\ell (\calc_\t)_{\lambda_j},
\]
where $(\calc_\t)_{\lambda_j}$ is the localization at \(\lambda_j\) of \(\calc_\t\), which in turn can be written in the following way:
\[
(\calc_\t)_{\lambda_j} \simeq 
\bigoplus_{i=1}^{s_j} \calo_{\lambda_j^{(a_{j,i})}}^{\oplus p_{j,i}}.
\] 

Here, we denoted by  \(\lambda_j^{(a_{j,i})}\) the subscheme  defined by the ideal \((g_j^{a_{j,i}})\) where \(g_j\) is a linear form vanishing at \(\lambda_j\) for each \(j \in \{1,\ldots,\ell\}\).
Up to choosing linear coordinates in $\p 1$ so that none of the subschemes $\lambda_1,\ldots,\lambda_\ell$ is at $\infty = (1:0)$, we may write $\lambda_j \in \kappa$ and $g_j=z_1-\lambda_j z_2$ for $j\in \{1,\ldots,\ell\}$. 

 For all $j \in \{1,\ldots,\ell\}$, the block of $\rho_\sigma$ presenting $(\calc_\t)_{\lambda_j}$ is itself in block diagonal form, with blocks of sizes \(a_{j,1},\ldots,a_{j,s_j}\), repeated 
\(p_{j,1},\ldots,p_{j,s_j}\) times, where the block of size \(a \in \N^*\) takes the form 
\[
\begin{pmatrix}
0 & 0 & \cdots  & 0 & z_2 & z_1-\lambda_j z_2 \\ 
0 & \cdots &0 & z_2 & z_1 -\lambda_j z_2& 0\\ 
\vdots & \vdots & \iddots & \iddots & 0 & \vdots  \\
0 & z_2 & z_1 -\lambda_j z_2& 0 & \iddots  & \iddots  \\ 
z_2 & z_1 -\lambda_j z_2& 0 & \iddots  & \iddots  \\ 
z_1 -\lambda_j z_2& 0 & \cdots & \iddots &  \\ 
\end{pmatrix}.
\]

The integers $a_{j,i}$ are arranged in the \emph{Segre symbol}. We write, for each \(j \in \{1,\ldots,\ell\}\):
\[
\Sigma_j=(\underbrace{a_{j,1},\ldots,a_{j,1}}_{p_{j,1}},\ldots,\underbrace{a_{j,s_j},\ldots,a_{j,s_j}}_{p_{j,s_j}}), \qquad \mbox{with \(a_{j,1} > \cdots > a_{j,s_j}\)}.
\]

The Segre symbol $\Sigma$ for a pencil of quadrics $\sigma$ is defined to be the multi-set \([\Sigma_1,\ldots,\Sigma_\ell]\). We will use exponential notation for parenthesized repeated entries; for instance, the Segre symbol \([(1,1,1),(3,3,1),2,2]\) in exponential notation reads \([1^3,(3^2,1),2,2]\).

Note that, as we are dealing with potentially irregular pencils \(\sigma\), we always refer to the Segre pencil of the regular part \(\bar \sigma\) of \(\sigma\). In case \(\sigma\) is completely irregular, its Segre symbol is \(\emptyset\) by convention. Note that, in contrast to the behavior for regular pencils, the Segre symbol of an irregular pencil may be of the form \([1^p]\), for some integer \(p\).

\bigskip

We introduce a filtration of the torsion part $\calc_\t$ of the cokernel of $\rho_\sigma$ for future use.
To this purpose, in order to simplify the notation we work at a single point $\lambda$ so we tacitly assume $\ell=1$ and we suppress $j$ from the indices.
The cokernel of this matrix is the structure sheaf of \(\calo_{\lambda^{(a)}}\), where \(\lambda^{(a)}\) is the \(a\)-tuple point of \(\p1\) which we may take to be defined by \((z_1^a)\).
Therefore, we have:
\[
\calc_\t \simeq \bigoplus_{i=1}^s \calo_{\lambda^{(a_i)}}^{\oplus p_i}.
\]

For each \(i \in \{1,\ldots,s\}\), we consider the injection \(\lambda^{(a_i)} \subset \lambda^{(a_s)}\). Concatenating the surjections \(\calo_{\lambda^{(a_s)}} \to \calo_{\lambda^{(a_i)}}\) we get an epimorphism:
\[
\calc_\t \twoheadrightarrow \bigoplus_{i=1}^s \calo_{\lambda^{(a_s)}}^{\oplus p_i}.
\]
For \(k \in \{1,\ldots,s\}\), put \(q_k = \sum_{i=1}^k p_i\).
From the above epimorphism, we get the exact sequence:
\[
0 \to \bigoplus_{i=1}^{s-1} \calo_{\lambda^{(a_i-a_s)}}^{\oplus p_i} \to \calc_\t \to \calo_{\lambda^{(a_s)}}^{\oplus q_s} \to 0.
\]
Iterating this procedure we obtain a natural filtration:
\[
0 = \cald^{(0)} \subset \cald^{(1)} \subset \cdots \subset \cald^{(s)} = \calc_\t,
\]
where, for all \(k \in \{1,\ldots,s\}\),  we have (with the convention \(a_{s+1}=0\)):
\[
\cald^{(k)} = \bigoplus_{i=1}^k \calo_{\lambda^{(a_i-a_{k+1})}}^{\oplus p_i}, \qquad \calc^{(k)} := \cald^{(k)}/\cald^{(k-1)} = \calo_{\lambda^{(a_k-a_{k+1})}}^{\oplus q_k}.
\]

\subsection{The Jacobian scheme of a pencil of quadrics}

Here we sketch the description of the Jacobian scheme of a pencil of quadrics, with special attention to the case of irregular pencils, by giving an outline of a construction, (essentially due to C. Segre, \cite{Segiii}) of a rational normal scroll swept by vertices of quadric cones in the pencil.
We give here essentially a set-theoretic description, as the scheme-theoretic description one only be needed later on.
We start with the following simple observation.

\begin{lemma} \label{they are disjoint}
Let \(\sigma\) be an incompressible pencil of quadrics and let \(\lambda,\mu \in \p1\) be distinct points such that \(Q_\lambda\) and \(Q_\mu\) are singular. Then the singular loci of \(Q_\lambda\) and \(Q_\mu\) are disjoint linear spaces of dimension \(\cork(Q_\lambda)-1\) and \(\cork(Q_\mu)-1\).
\end{lemma}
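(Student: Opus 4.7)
The dimension statement for each singular locus is already recalled at the start of Section \ref{sec:stable-pencils}: the singular locus of a quadric $Q$ of corank $r$ is a linear subspace of dimension $r-1$. So the content of the lemma is the disjointness.

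My plan is to argue by contradiction: suppose there exists $p \in \sing(Q_\lambda) \cap \sing(Q_\mu)$. Singularity of $Q_\lambda$ at $p$ means $\nabla(z_1^{(\lambda)} f_1 + z_2^{(\lambda)} f_2)(p) = z_1^{(\lambda)}\nabla f_1(p)+z_2^{(\lambda)}\nabla f_2(p) = 0$, and analogously for $\mu$. Since $\lambda \neq \mu$ in $\p1$, the two vectors $(z_1^{(\lambda)}, z_2^{(\lambda)})$ and $(z_1^{(\mu)}, z_2^{(\mu)})$ are linearly independent in $\kappa^2$, so the resulting $2\times 2$ linear system forces
\[
\nabla f_1(p) = \nabla f_2(p) = 0.
\]

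Now I want to deduce that $\sigma$ is compressible, contradicting the hypothesis. Choose a system of homogeneous coordinates on $\pn$ in which $p = [1:0:\cdots:0]$. Writing each quadric as $f_i = \sum_{0\le j \le k \le n} a^{(i)}_{jk} x_j x_k$, the vanishing $\partial f_i/\partial x_j(p)=0$ for all $j$ evaluates to $a^{(i)}_{0j}=0$ for every $j \in \{0,\ldots,n\}$ (with the convention $a^{(i)}_{jk}=a^{(i)}_{kj}$). Thus in these coordinates the variable $x_0$ does not appear in $f_1$ or $f_2$, which by Definition \ref{reg-comp} means that $\sigma$ is compressible, the desired contradiction.

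The argument is elementary and the only subtlety to double-check is the quick linear-algebra step extracting $\nabla f_1(p)=\nabla f_2(p)=0$ from the two gradient conditions, for which distinctness of $\lambda,\mu$ in $\p1$ is exactly what is needed. No genuine obstacle is expected; the lemma is essentially a coordinate-free translation of the fact that a common singularity of two distinct pencil members would be a common singularity of \emph{all} members of the pencil, and the latter is equivalent to compressibility.
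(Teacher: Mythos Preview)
Your proof is correct and follows essentially the same approach as the paper's: a common singular point $p$ forces $\nabla f_1(p)=\nabla f_2(p)=0$, which in turn makes the pencil compressible. The only cosmetic difference is that the paper phrases the last step as ``the partial derivatives fail to span $H^0(\calo_\pn(1))$'' rather than carrying out the explicit coordinate computation at $p=[1:0:\cdots:0]$, but the content is identical.
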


\begin{proof}
The singular loci of \(Q_\lambda\) and \(Q_\mu\) are defined by linear equations and the corank of \(Q_\lambda\) and \(Q_\mu\) is precisely the number of independent equations.
In addition, these two linear spaces are disjoint, as the coordinates of a point of \(\pn\) lying in the singular locus of two distinct quadrics \(f_1,f_2\) of the pencil would annihilate the derivatives of \(f_1\) and \(f_2\), so such derivatives would fail to span \(H^0(\calo_\pn(1))\). Thus we could choose coordinates so that one of the variables \(x_0,\ldots,x_n\) occurs neither in \(f_1\) nor in \(f_2\). However, this is excluded by the hypothesis that \(\sigma\) is incompressible.
\end{proof}

If the divisorial part of the Jacobian scheme of a pencil of quadrics $\sigma$ is non-empty then such part may have degree $1$ or $2$.
The following lemma gives an account of this extremal case.

\begin{lemma} \label{c1=0,1}
Let $\sigma$ be a pencil of quadrics with a singular divisor of degree $l\in\{1,2\}$. Then 
\begin{enumerate}[label = \roman*)]
\item \label{case 0} $l=2$ if and only if, in suitable coordinates of $\p n$, we have either $\sigma=(x_0^2,x_1^2)$ or $\sigma=(x_0^2,x_0x_1)$.
\item \label{case 1} $l=1$ if and only if $\sigma$ contains one and only one  double hyperplane or there is a choice of linear coordinates of $\p n$, such that 
$\sigma=(x_0x_1,x_0x_2)$.
\end{enumerate}
\end{lemma}
\begin{proof}
Let us prove \ref{case 0}. According to Lemma \ref{zero}, we have $c_1(\calt_\sigma)=0$ if and only $l=2$, if and only, in suitable coordinates, $\sigma$ depends only on $x_0$ and $x_1$. In such coordinates, the Segre symbols is either $[1,1]$ or $[2]$ and theses cases correspond respectively to $(x_0^2,x_1^2)$ and $(x_0^2,x_0x_1)$.

Let us move to \ref{case 1}. According to \eqref{d vs c1}, $l=1$ if and only if $\Xi_\sigma$ is a simple hyperplane. If $\sigma$ contains no double hyperplane, then $r_0 \le n-1$ and $\Xi_\sigma$ is the union of linear spaces of dimension at most $n-2$, parametrized by singular quadrics in the pencil. Hence there must be infinitely many such spaces (i.e. $\sigma$ is irregular) and they cannot be all of dimension $\le n-2$, i.e. $r_1 \ge n-1$. Hence $r_0=r_1=n-1$ and $\sigma$ is completely irregular, with compressibility $m=n-1$. Hence, by \eqref{n+1=}, $u=v=c_1=1$ and $\sigma$ takes the form $(x_0x_1,x_0x_2)$ in suitable coordinates.
\end{proof}

\subsubsection{The Jacobian scheme of an irregular pencil}
\label{Jacobian of irregular}

Assume now that \(\sigma\) is an incompressible irregular pencil and seek a set-theoretic description of degeneracy scheme \(\Xi_\sigma\).
The torsion-free part \(\calc_\tf\) defines a projective bundle \(Y=\p{}(\calc_\tf)\) and, since the vector bundle \(\calc_\tf\) is very ample, \(Y=\p{}(\calc_\tf)\) embeds via the linear system of the tautological relatively ample divisor \(h\) as a rational normal scroll of degree \(v\), spanning a linear space \(L \subset \pn\) of dimension \(n-u\).

\begin{lemma}\label{Jacobian union}
Let \(\sigma\) be an incompressible pencil of quadrics. Then \(\Xi_\sigma\) satisfies:
\begin{equation} 
(\Xi_\sigma)_{\redu} = ~ Y ~ \cup \bigcup_{\lambda \in \supp(\calc_\t)} {\p{r(\lambda)-1}},
\end{equation}
where the linear subspaces \(\{\p{r(\lambda)-1} \mid \lambda \in \supp(\calc_\t)\}\) are disjoint.
In particular:
\[
\dim(\Xi_\sigma) = \max(r_0-1,r_1), \qquad r_1=n+1-u-v.
\]
\end{lemma}

We develop the proof of this lemma, essentially known to Segre, for the sake of completeness; we take advantage to introduce some notation.

\begin{proof}
We look at the projectivization of the vector bundle \(\calc_\tf\)
and of the coherent sheaves \(\calc_\t\), \(\calc_\sigma\). The epimorphisms \(\calo_\p1^{\oplus(n+1)} \onto \calc_\sigma\),  \(\calo_\p1^{\oplus(n+1)} \onto \calc_\t\),  and \(\calo_\p1^{\oplus(n+1)} \onto \calc_\tf\) induce embeddings \(\p{}(\calc_\sigma) \hookrightarrow \p 1 \times \p n\), \(\p{}(\calc_\t) \hookrightarrow \p 1 \times \p n\) and \(Y \hookrightarrow \p 1 \times \p n\). Similarly, the epimorphism $\opn^{\oplus2}\onto\calq(-1)$ induces an embedding \(\p{}(\calq_\sigma(-1)) \hookrightarrow \p 1 \times \p n\).
The two subschemes \(\p{}(\calc_\sigma) \) and  \(\p{}(\calq_\sigma(-1))\) of \(\p 1 \times \p n\) are defined by the same bihomogeneous equations. Indeed, denoting by \(\lambda=(z_1:z_2)\) and \(x=(x_0:\ldots:x_n)\) the points of \(\p 1\) and \(\pn\) and recalling the notation \(f_{i,j}=\frac{\partial f_i(x)}{\partial x_j}\), we have:
\[
\p{}(\calq_\sigma(-1)) = \p{}(\calc_\sigma) = \{((x,\lambda) \in \p 1 \times \p n \mid f_{1,j}z_1+f_{2,j}z_2=0, \forall j = 0,\ldots,n\},
\]
which in turn gives a Kozsul complex (in the obvious notation):
\begin{equation} \label{koszul P1}
 \cdots \to \calo_{\p 1 \times \pn}(-1,-1)^{\oplus(n+1)} \to \calo_{\p 1 \times \pn} \to \calo_{\p{}(\calc_\sigma)}\to 0.
\end{equation}

We get thus a correspondence:
\begin{equation} \label{correspondence}
\begin{split} \xymatrix@-2ex{
&\p{}(\calc_\sigma) \ar_-{\varphi}[dl] \ar^-{\psi}[dr]\\
\p 1&& \pn} \end{split}
\end{equation}
where the map \(\varphi : \p{}(\calc_\sigma) \to \p 1\) is generically a \(\p {r_1-1}\)-bundle and \(\psi : \p {}(\calc_\sigma) \to  \Xi_\sigma \subset \pn\) is an isomorphism at the points where \(\calq_\sigma\) has rank 1. 

At each point \(\lambda\) of the support of the torsion part \(\calc_\t\) we have a skyscraper sheaf supported at \(\lambda\), whose rank we denote by \(r(\lambda)\). The surjection \(\calo_{\p 1}^{\oplus(n+1)} \to \calo_\lambda^{\oplus r(\lambda)}\) induces an embedding \(\p{r(\lambda)-1} \subset \Xi_\sigma \subset \pn\). We noticed in Lemma \ref{they are disjoint} that the linear spaces appearing as singular loci of distinct points of \(\supp(\calc_\t)\) are disjoint. This achieves the proof.
\end{proof}

\subsubsection{Completely irregular pencils}

Recall that the incompressible pencil \(\sigma\) is \textit{completely irregular} if has no regular part, which is to say, if \(u=v\). This is equivalent to the condition \(\calc_\t=0\), which in turn is tantamount to \(r_0=r_1\).

We take a closer look at the Jacobian scheme in this case.
Denote by \(F\) the divisor class of a fibre of \(Y \to \p 1\), so \(\calo_Y(F) \simeq \varphi^*(\calo_{\p 1}(1))\), in the notation of the diagram in display \eqref{correspondence}; write also \(H=c_1(\psi^*(\calo_{\pn}(1)))\). Note that \(\psi_*(\calo_Y(F)) \simeq \calq_\sigma\) and that the Koszul complex \eqref{koszul P1} is exact at \(\calo_{\p 1 \times \pn}\). Tensoring it with \(\calo_{\p 1 \times \pn}(1,0)\)
and applying \(\psi_*\), we get an exact sequence:
\[
\calo_{\pn}^{\oplus(n+1)}(-1) \to \calo_\pn^{\oplus 2 } \to \calo_Y(F) \to  0.    
\]

The rightmost morphism above agrees with the Jacobian matrix, so we have an exact sequence:
\begin{equation} \label{J4}
0 \to \calt_\sigma \to \calo_\pn^{\oplus(n+1)} \stackrel{\nabla\sigma}{\to} \calo_\pn(1)^{\oplus 2 } \to \calo_Y(H+F) \to  0.
\end{equation}

\begin{example}
Let $n=5$. Regular pencils give \(\calc_\sigma = \calc_\t\), a finite-length scheme with \(h^0(\calc_\sigma)=6\).  For irregular pencils, we have the following possibilities.
\[
\begin{small}
\begin{array}{c|c|c|c|c|c}
r_1 & \mbox{\((u,v)\) } & h^0(\calc_\t) & \calc_\tf & \mbox{Compressibility} & \mbox{Completely irregular} \\
\hline
\hline
1 & (5,0) & 5 & \calo_\p1 & \mbox{1} &\mbox{no}  \\
1 & (4,1) & 3 & \calo_\p1(1) &\mbox{0} &\mbox{no}\\
1 & (3,2) & 1 & \calo_\p1(2) & \mbox{0} &\mbox{no}\\
\hline
2 & (4,0) & 4 &  \calo_\p1^{\oplus 2} & \mbox{2} &\mbox{no}\\
2 & (3,1) & 2 & \calo_\p1 \oplus \calo_\p1(1) & \mbox{1} &\mbox{no}\\
2 & (2,2) & 0 &  \calo_\p1(1)^{\oplus 2} & \mbox{0} &\mbox{yes}\\
2 & (2,2) & 0 & \calo_\p1 \oplus \calo_\p1(2) & \mbox{1} &\mbox{yes}\\
\hline
3 & (3,0) & 3 &  \calo_\p1^{\oplus 3} & \mbox{3} &\mbox{no}\\
3 & (2,1) & 1 &  \calo_\p1^{\oplus 2} \oplus \calo_\p1(1) & \mbox{2} &\mbox{no}\\
\hline
4 & (2,0) & 2 &  \calo_\p1^{\oplus 4} & \mbox{4} &\mbox{no}\\
4 & (1,1) & 0 &  \calo_\p1^{\oplus 3} \oplus \calo_\p1(1)& \mbox{3} &\mbox{yes}
\end{array}
\end{small}
\]
Observe that there is only one incompressible, completely irregular pencil of quadrics in $\p5$. 

However, the lowest dimension in which incompressible, completely irregular pencils occur is $n=3$, the only such pencil being given by $(x_1x_2,x_0x_2)$. Note that this pencil is not a complete intersection.
The next case is for $n=4$, given by $(x_1x_3+x_2x_4,x_0x_3+x_1x_4)$.
\end{example}

\begin{lemma} \label{it is equivariant}
Let \(\sigma\) be an incompressible completely irregular pencil. Then there is an action of \(\SL_2(\kappa)\) on \(\pn\) for which the exact sequence in display \eqref{J4} is equivariant.
\end{lemma}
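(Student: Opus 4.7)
The strategy is to exploit the canonical $\SL_2$-structure on the Euler-type matrices $\tau_m$ that, by Proposition \ref{recover}, assemble the polarization of $\sigma$. First, since $\sigma$ is incompressible and completely irregular we have $\calc_\t = 0$ and $\calc_\sigma = \calc_\tf = \bigoplus_{i=1}^{r_1} \calo_{\p1}(c_i)$, so Proposition \ref{recover} gives coordinates on $\pn$ in which $\rho_\sigma = \bigoplus_{i=1}^{r_1}(\tau_{c_i} \oplus \tau_{c_i}^\t)$ with no residual regular block. Writing $U$ for the standard $2$-dimensional $\SL_2$-module, so that $\p 1 = \p{}(U)$ carries the usual $\SL_2$-action, the sequence \eqref{tau} defining $\tau_m$ is the $m$-th symmetric power of the Euler sequence on $\p 1$ and admits the $\SL_2$-equivariant presentation
\[
0 \to \mathrm{Sym}^{m-1} U \otimes \calo_{\p1}(-1) \xrightarrow{\tau_m} \mathrm{Sym}^{m} U \otimes \calo_{\p1} \to \calo_{\p1}(m) \to 0,
\]
so each $\tau_m$, and hence each block $\tau_{c_i} \oplus \tau_{c_i}^\t$, is $\SL_2$-equivariant.

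Next I would construct the action on $\pn$. Set
\[
W := \bigoplus_{i=1}^{r_1}\bigl(\mathrm{Sym}^{c_i-1} U \oplus \mathrm{Sym}^{c_i} U\bigr),
\]
an $\SL_2$-module of dimension $\sum_{i=1}^{r_1}(2c_i+1) = 2v+r_1 = n+1$ (using $u=v$ and $n+1 = r_1 + u + v$ from Subsection \ref{describing irregular pencils}). Identifying $\kappa^{n+1}$ with $W$ via the coordinates from Proposition \ref{recover} endows $\pn = \p{}(W)$ with an $\SL_2$-action, and upgrades the trivial bundles $\calo_{\p 1}^{\oplus (n+1)}$ and $\calo_{\p 1}(-1)^{\oplus (n+1)}$ on $\p 1$ to the equivariant sheaves $W\otimes\calo_{\p 1}$ and $W\otimes\calo_{\p 1}(-1)$, for which $\rho_\sigma$ is $\SL_2$-equivariant by construction.

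Finally I would read off equivariance of every ingredient in \eqref{J4}. Viewing the pencil as a morphism $U^\vee \to \mathrm{Sym}^2 W^\vee$ parametrizing the family of symmetric bilinear forms on $W$, equivariance of $\rho_\sigma$ amounts to equivariance of this morphism; the Jacobian matrix $\calj_\sigma \colon \opn^{\oplus(n+1)} \to \opn(1)^{\oplus 2}$ is obtained from it by polarization in the $W$-variable, hence is $\SL_2$-equivariant, and $\calt_\sigma = \ker(\calj_\sigma)$ inherits a linearization. The scroll $Y = \p{}(\calc_\tf)$ is $\SL_2$-equivariant since $\calc_\tf$ is, and the embedding $Y \hookrightarrow \pn$ produced in Lemma \ref{Jacobian union} via the incidence $\p{}(\calc_\sigma) \subset \p 1 \times \pn$ is equivariant for the diagonal action, equivariantly linearizing $\calo_Y(H+F)$.

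The main bookkeeping point, which I expect to be the principal obstacle, is to align the block decomposition of Proposition \ref{recover} with the equivariant presentations of the individual $\tau_m$. This is legitimate because, as recorded right after \eqref{tau}, the matrix $\tau_m$ is unique up to coordinate changes on source and target and a homography of $\p 1$; one may therefore pick the presentations in which the $\SL_2$-action on $\p 1$ and the induced action on $W$ make every block equivariant simultaneously.
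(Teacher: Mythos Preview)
Your argument is correct and follows essentially the same route as the paper: both use Proposition \ref{recover} to put $\rho_\sigma$ in the block form $\bigoplus_i(\tau_{c_i}\oplus\tau_{c_i}^\t)$, exploit the canonical $\SL_2$-equivariance of each $\tau_m$ (your $\mathrm{Sym}^m U$ is the paper's $V_m$), and deduce that the Jacobian matrix and hence its kernel and cokernel are equivariant. You are in fact slightly more explicit than the paper about the linearization of $\calo_Y(H+F)$ via the incidence correspondence, which the paper leaves implicit.
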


\begin{proof}
Let $c_1 \le \cdots \le c_{r_1}$ be the degree vector of $\sigma$. Let the group \(\SL_2(\kappa)\) act by homographies on \(\p 1\) regarded as the base of the pencil. 
Since the regular part $\bar \rho_\sigma$ vanishes, $\rho$ is the bloc-diagonal matrix 
\[\diag(\zeta(c_1),\ldots, \zeta(c_{r_1})).
\] 
Since each block of $\rho_\sigma$ is \(\SL_2(\kappa)\)-equivariant, so are \(\rho_\sigma\) and $\sigma$ itself. Hence  the Jacobian matrix of \(\sigma\) is also \(\SL_2(\kappa)\)-equivariant and this induces an \(\SL_2(\kappa)\)-action on its kernel and cokernel sheaves. 
\end{proof}


\section{Stability for pencils of quadrics} 

\label{subsec:nodouble}

In this section \(\kappa\) is algebraically closed of characteristic different from 2.
Our goal is to present the proof of Theorem \ref{mthm:stable}.
The first case is equivalent to $c_1(\calt_\sigma)=-1$ and, when $n \ge 3$, stability of $\calt_\sigma$ is equivalent to incompressibility by Lemma \ref{double planes}.
The second and third cases are equivalent to $c_1(\calt_\sigma)=0$ and in turn to the fact that $\calt_\sigma \simeq \calo_{\p n}^{\oplus(n-1)}$ by Lemma \ref{zero}.

This discussion proves the first two items \ref{thm c1=0} and \ref{thm c1=-1} of Theorem \ref{mthm:stable}.
The next step is to analyze pencils of quadrics containing no singular hyperplanes. Lemma \ref{compressibility} implies that compressible pencils of quadrics having no singular hyperplane have slope-unstable logarithmic sheaves, thus proving the third item of Theorem \ref{mthm:stable}.

We can finally address incompressible pencils of quadrics \(\sigma=(f_1,f_2)\) containing no singular hyperplanes. The result depends on the maximal corank of the quadrics \(Q_\lambda\), where for each \(\lambda=(z_1:z_2) \in \p1\) we write \(Q_\lambda = \VV(z_1 f_1 + z_2 f_2)\). 
To be precise, we prove the following result.

\begin{theorem} \label{stability-pencils}
Let \(n \ge 3\) and let \(\sigma\) be an incompressible pencil of quadrics in \(\pn\) which contains no singular hyperplane. Put \(r_0=\max(\cork(Q_\lambda) \mid \lambda \in \p 1)\). 
\begin{enumerate}[label=\roman*)]
    \item \label{<} If \(2r_0 < n+1\), then \(\calt_\sigma\) is slope-stable.
    \item \label{=} If \(2r_0 = n+1\), then \(\calt_\sigma\) is strictly slope-semistable.
    \item \label{>} If \(2r_0 > n+1\), then \(\calt_\sigma\) is slope-unstable.
\end{enumerate}
\end{theorem}

Since \(\sigma\) contains no singular hyperplane, we have \(\codim(\calq_\sigma) \ge 2\) so the slope of \(\calt_\sigma\) is:
\[
\mu(\calt_\sigma)=\frac{2}{1-n}.
\]
The proof of Theorem \ref{stability-pencils} will be divided into three parts. We start by establishing items \ref{=} and \ref{>} in Section \ref{koszul subs}. For the proof of item \ref{<}, we first consider regular pencils in Section \ref{stability regular}, leaving the case of irregular pencils for Section \ref{stability irregular}.

\subsection{Koszul subsheaves}\label{koszul subs}

As a preliminary step towards the proof of items \ref{=} and \ref{>}, of Theorem \ref{stability-pencils} we study the sheaves, that we call Koszul subsheaves, appearing in the Koszul complex of a linear space. We set \(\calr_M\) for the first Koszul syzygy sheaf of a linear subspace \(M\subset \pn\) of codimension \(r>1\), namely the sheaf fitting into the following short exact sequence:
\[
0 \to \calr_M \to \calo_\pn^{\oplus r }(-1) \to \cali_M \to 0.
\]

\begin{lemma}\label{calr_M}
The sheaf \(\calr_M\) is slope-stable.
\end{lemma}
\begin{proof}
Note that $\mu(\calr_M(1))=-1/(r-1)$. Moreover, by the argument in the proof of Lemma \ref{sub of incompressible}, any nonzero saturated subsheaf $\calf\into\calr_M(1)$ must have $c_1(\calf)\le-1$, since $h^0(\calr_M(1))=0$. If $\calf$ destabilizes $\calr_M(1)$, then providing a contradiction, as:
$$ \frac{c_1(\calf)}{\rk(\calf)} > \frac{-1}{r-1}, \qquad
\mbox{so} \qquad c_1(\calf) > - \frac{\rk(\calf)}{r-1} > -1. $$
\end{proof}

\subsubsection{Koszul subsheaves from singular quadrics}
A linear subspace of \(\Xi_\sigma\) is called \textit{maximal} if it is not strictly contained in another linear subspace of \(\Xi_\sigma\). The following technical lemma is quite useful.

\begin{lemma} \label{ii and iii}
Let \(\sigma\) be an incompressible pencil of quadrics, \(\varrho \ge 3\) be an integer and \(L \subset \Xi_\sigma\) be a maximal linear subspace of projective dimension \(\varrho-1\). Then there is a linear subspace \(M \subset \pn\) of codimension \(\varrho \) and a subscheme \(W \subset M\) such that \(\calt_\sigma\) fits into
\begin{equation} \label{4terms}
0 \to \calr_M(1) \to \calt_\sigma \to \calr_L(1) \to \cali_{W/M}(1) \to 0.
\end{equation}
\end{lemma}

\begin{proof}
Since the pencil \(\sigma\) is incompressible, the linear forms appearing in the Jacobian matrix of \(\sigma\) span \(H^0(\calo_\pn(1))\), hence the sheaf \(\calq_\sigma\) has rank 1 at each point of its support, in particular, this happens at each point of \(L\), so \(\calq_\sigma|_L\) is a line bundle on \(L\), namely there is \(e \in \Z\) such that \(\calq_\sigma|_L \simeq \calo_L(e)\). Since \(\calo_L(1)^{\oplus 2}\) surjects onto \(\calo_L(e)\) and \(\varrho \ge 3\), we conclude that \(e=1\).
 
The surjection \(\calq_\sigma \to \calo_L(1)\) allows to write the following commutative exact diagram:
\[
\xymatrix@-2ex{
0 \ar[r] & \calm_\sigma \ar@{.>}[d] \ar[r] & \calo_\pn(1)^{\oplus 2} \ar[r] \ar[d] & \calq_\sigma \ar[d] \ar[r] & 0 \\ 
0 \ar[r] & \cali_L(1) \ar[r] & \calo_\pn(1) \ar[d] \ar[r] & \calo_L(1) \ar[r] \ar[d] & 0 \\
 & & 0 & 0}
\]

Put \(\calf\) and \(\calg\) for the kernel and cokernel of the induced morphism \(\calm_\sigma \to \cali_L(1)\), respectively; in addition, let $\calq'$ denote the kernel of the epimorphism $\calq_\sigma\onto\calo_L(1)$. The snake lemma provides the following exact sequence
$$ 0 \to \calf \to \opn(1) \to \calq' \to \calg \to 0, $$
thus there is a subscheme \(W \subset \pn\) such that \(\calf \simeq \cali_W(1)\) and 
$$ \supp(\calg)\subset\supp(\calq')\subset\supp(\calq_\sigma) = (\Xi_\sigma)_{\rm red}.$$

Since \(\calm_\sigma\) is the image of the the Jacobian matrix \(\calo_\pn^{\oplus(n+1)} \to \calo_\pn(1)^{\oplus 2}\), we get a morphism \(\calo_\pn^{\oplus(n+1)} \to \cali_L(1)\), with cokernel \(\calg\). Therefore, either this morphism is surjective, or \(\calg\) is supported on a linear space strictly containing \(L\). However, this second possibility is excluded because $L$ is maximal.

Summing up, we obtain an epimorphism \(\calm_\sigma \onto \cali_L(1)\). Since \(L\) is cut by \(n+1-\varrho\) equations, the induced epimorphism \(\calo_\pn^{\oplus(n+1)} \onto \cali_L(1)\) factors through a morphism
\(\varphi:\calo_\pn^{\oplus(n+1-\varrho)} \to \cali_L(1)\), and we get a second diagram:
\begin{equation} \label{diagram M}    
\begin{split} \xymatrix@-2ex{
& & \calo_\pn^{\oplus \varrho} \ar[d] \ar[r]^{\varphi} & \cali_W(1) \ar[d]  \\
0 \ar[r] & \calt_\sigma \ar@{.>}[d] \ar[r] & \calo_\pn^{\oplus (n+1)} \ar[r] \ar[d] & \calm_\sigma \ar[d] \ar[r] & 0 \\ 
0 \ar[r] & \calr_L(1) \ar[r] & \calo_\pn^{\oplus (n+1-\varrho)} \ar[d] \ar[r] & \cali_L(1) \ar[r] \ar[d] & 0 \\
 & & 0 & 0} \end{split}
\end{equation}

The snake lemma then yields the exact sequence
$$ 0 \to \ker(\varphi) \to \calt_\sigma \to \calr_L(1) \to \coker(\varphi) \to 0 . $$
Note that the $\im(\varphi)$ coincides with
the ideal sheaf of a linear subspace \(M\ \subset \pn\) of codimension \(\varrho\) containing \(W\), twisted by $\opn(1)$; its cokernel is the ideal of \(W\) in \(M\), also twisted by $\opn(1)$. We, therefore, obtain the exact sequence in display \eqref{4terms}.
\end{proof}

\subsubsection{Destabilizing Kozsul subsheaves} \label{destab koszul}

Now we can prove items \ref{=} and \ref{>} of Theorem \ref{stability-pencils}. Indeed, we set \(\varrho=r_0\) and consider a quadric \(Q_\lambda\) in the pencil \(\sigma\) having \(\cork(Q_\lambda)=r_0\). 
The assumption \(r_0 \ge (n+1)/2\) forces \(r_0 \ge 3\) or \((n,r_0)=(3,2)\). 
The latter case follows from the full classification of pencils of quadrics in $\p3$ and their logarithmic tangent sheaves provided in Subsection \ref{another}. 
Hence we can assume \(r_0 \ge 3\), so the linear space \(L \subset \pn\) of dimension \(r_0-1\) appearing as the singular locus of \(Q_\lambda\) satisfies the hypotheses of Lemma \ref{ii and iii}, thus \(\calt_\sigma\) contains the Koszul subsheaf \(\calr_M(1)\) which has slope \(1/(1-r_0)\). The condition \(r_0>(n+1)/2\) implies:
\[
\mu(\calr_M(1)) = \frac{1}{1-r_0} > \frac{2}{1-n}=\mu(\calt).
\]

Finally, for item \ref{=}, we use the exact sequence in display \eqref{4terms}, which yields:
\begin{equation} \label{3terms}
0 \to \calr_M(1) \to \calt_\sigma \to \cale \to 0,    
\end{equation}
where \(\cale\) is the kernel of \(\calr_L(1) \onto \cali_{W/M}(1)\). Since \(M\) has codimension \(r_0 \ge 2\), the sheaves \(\cale\) and \(\calr_L(1)\) share the same slope, namely \(1/(1-r_0)\). 
This implies that any subsheaf $\calk$ of \(\cale\) with $\mu(\calk)\ge\mu(\cale)$ would destabilize \(\calr_L(1)\); since, by Lemma \ref{calr_M}, $\calr_L(1)$ is slope-stable, we conclude that \(\cale\) is also slope-stable. When \(r_0=(n+1)/2\), then \(\calr_L(1)\) also has slope equal to \(1/(1-r_0)\). Therefore, the exact sequence in display \eqref{3terms} shows that \(\calt\) is strictly slope-semistable; in addition, since $\calr_M$ is slope-stable, we can also conclude that $\calr_M(1)$ and $\cale$ are the factors of the Jordan--Holder filtration of $\calt_\sigma$.
\bigskip

\subsection{Proof of stability for regular pencils}\label{stability regular}

Let $\sigma$ be a regular pencil of quadrics containing no double hyperplane so that there are only finitely many points \(\lambda \in \p 1\) such that \(Q_\lambda\) is singular and at each such point the singular locus of \(Q_\lambda\) is a linear space of dimension \(\cork(Q_\lambda)-1\).
A regular pencil is incompressible so these spaces are disjoint by Lemma \ref{they are disjoint},
so \((\Xi_{\sigma})_{\rm red}\) is the union of finitely linear spaces of dimension at most \(r_0-1\).

In order to prove \ref{<} we assume, by contradiction, that \(\calt\) has a saturated destabilizing subsheaf \(\calk\) of rank \(p\), with \(1 \le p \le n-2\) with \(\big(\bigwedge^p \calk\big)^{\vee \vee} \simeq \calo_\pn(-e)\). Since $\sigma$ is incompressible, Lemma \ref{sub of incompressible} implies that \(e>0\). The condition that \(\calk\) destabilizes \(\calt\) amounts to:
\[
(n-1)e \le 2p.
\]
Since \(p \le n-2\), this gives $e=1$. Also, we get \(p \ge (n-1)/2\).

Choose a sufficiently general linear subspace \(M \subset \pn\) of dimension \(n-r_0\).
Since \(\dim(M)+\dim(\Xi_\sigma)=n-1\), we may assume that \(M\) is disjoint from the degeneracy locus \(\Xi_\sigma\) and that \(M\) meets transversely the locus where \(\calt / \calk\) is not locally free. The second assumption implies that \(\Tor_1(\calt/\calk,\calo_M)=0\), so we get a subsheaf \(\calk|_M \into \calt|_M\) which still destabilizes \(\calt|_M\). The first assumption yields \(\calq_\sigma|_M =0= \Tor^1(\calm_\sigma,\calo_M)\), so the restricted Jacobian matrix gives an exact sequence:
 \begin{equation} \label{restricted to M}
0 \to \calt|_M \to \calo_M^{\oplus (n+1)} \to \calo_M^{\oplus 2}(1) \to 0.
\end{equation}
The sheaf \(\calt|_M\) is locally free and, setting $q=n-1-p$ we get:
\[
\Big(\bigwedge^p \calt|_M\Big)(1) \simeq \Big(\bigwedge^q \calt^\vee|_M \Big)(-1).
\]
Since $\calk|_M$ is a subsheaf of $\calt|_M$, we obtain a monomorphism
$$ \calo_M(-1) \simeq \big(\bigwedge^p \calk|_M\big)^{\vee\vee} \into \big(\bigwedge^p \calt|_M\big)^{\vee \vee} \simeq \big(\bigwedge^p \calt|_M\big) $$ 
which in turn gives \(H^0\Big(\Big(\bigwedge^q \calt^\vee|_M \Big)(-1)\Big)\ne 0\). We need to prove that this is absurd.

In order to check this, we dualize the exact sequence in display \eqref{restricted to M} and take exterior powers to get a long exact sequence:
\[
0 \to \calo_M(-q-1)^{\oplus (q+1)} \to \calo_M(-q)^{\oplus q(n+1)} \to \cdots \to \calo_M^{\oplus{{n+1}\choose q}} \to \Big(\bigwedge^q \calt^\vee|_M \Big)(-1) \to 0.
\]

All of the terms in the sequence above, except for the rightmost one, are copies of \(\calo_M(-t)\) for some integer \(t\) with \(1 \le t \le q+1\).
In the range \(p \ge (n-1)/2\), we have \(q=n-1-p \le (n-1)/2\) so
\(q+1 \le (n+1)/2\). Now, the assumption \(r_0 <(n+1)/2\) guarantees \((n+1)/2 < n-r_0+1=\dim(M)+1\), thus $q<\dim(M)$.
Therefore \(H^*(\calo_M(-t))=0\) for all \(1 \le t \le q+1\) and hence \(H^0\Big(\Big(\bigwedge^q \calt^\vee|_M \Big)(-1)\Big)=0 \).
This is the contradiction we were looking for, thus proving \ref{<}.

This finishes the proof of Theorem \ref{stability-pencils} for regular pencils.

\subsection{Proof of stability for irregular pencils of quadrics}\label{stability irregular}

In order to prove Theorem \ref{mthm:stable}, it only remains for us to prove item \ref{<} for irregular incompressible pencils \(\sigma\) containing no double hyperplane.

By hypothesis, we have \(2r_0<n+1\). As in Subsection \ref{stability regular}, we assume by contradiction that \(\calt\) admits a saturated subsheaf \(\calk\) of rank \(p\) and again we get \(p \ge (n-1)/2\) and \((\bigwedge^p \calk)^{\vee \vee} \simeq \calo_\pn(-1)\).

Next, observe that, if \(r_1<r_0\), then by Lemma \ref{Jacobian union} we have \(\dim(\Xi_\sigma) \le r_0-1\). In this case, the proof of \ref{<} given in Subsection \ref{stability regular} goes through as again a sufficiently general linear subspace \(M \subset \pn\) of dimension \(n-r_0\) does not meet \(\Xi_\sigma\), while the rest of the argument is still valid for irregular pencils.

Therefore, we may assume until the end of the subsection that \(r_0=r_1\), which is to say, that the pencil is completely irregular, so \(u=v\), \(\calc_\sigma = \calc_\tf\) and \(\dim(\Xi_\sigma)=r_0\).

Next, we develop a lemma that will be used to check the indecomposability of $\calt_\sigma$ for a completely irregular pencil $\sigma$.
We note that there is an \(\SL_2(\kappa)\)-action on \(\calo_Y(H+f)\), namely the action by homographies on the basis of the scroll \(Y\).
Also, the \(\SL_2(\kappa)\)-action on \(\pn\) induces the isomorphism of \(\SL_2(\kappa)\)-modules: \begin{equation} \label{SL2}
H^0(\pn,\calo_\pn(1)) = \bigoplus_{i=1}^{r_1} (V_{c_i-1} \oplus V_{c_i}).
\end{equation}

The following lemma says that, if \(\sigma\) is a completely irregular pencil of quadrics, then the logarithmic sheaf \(\calt_\sigma\) is simple.

\begin{lemma} \label{its simple}
Let \(\sigma\) be a completely irregular incompressible pencil of quadrics with \(u \ge 2\).
Then \[\End_{\pn}(\calt_\sigma) \simeq \kappa.\]
\end{lemma}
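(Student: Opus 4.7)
The plan is to construct an injective $\SL_2$-equivariant algebra map $\mu \colon \End(\Ts) \to M_2(\kappa)$ and then use the $\SL_2$-equivariance together with rank estimates from Lemma \ref{3r1} to show that its image is contained in the scalar line $\kappa \cdot I_2$.

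Since $\sigma$ is completely irregular with $u \ge 2$, the scroll $Y$ has codimension $2u-1 \ge 3$ in $\pn$, so $\inext^q(\calo_Y,\opn)=0$ for every $q\le 2$. Starting from $0\to \calm_\sigma \to \opn(1)^{\oplus 2}\to \calo_Y(H+F)\to 0$ one deduces $\calm_\sigma^\vee\simeq\opn(-1)^{\oplus 2}$, $\inext^1(\calm_\sigma,\opn)=0$, and $\Ext^1(\calm_\sigma,\opn^{\oplus (n+1)})=0$. Applying $\Hom(-,\opn^{\oplus (n+1)})$ to the defining sequence of $\Ts$, the restriction map $M_{n+1}(\kappa)=\End(\opn^{\oplus (n+1)})\to\Hom(\Ts,\opn^{\oplus (n+1)})$ becomes an isomorphism, so every $\phi\in \End(\Ts)$ lifts uniquely to a constant matrix $A\in M_{n+1}(\kappa)$. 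The condition that $A$ preserve $\Ts=\ker\calj_\sigma$ is equivalent, via $\Hom(\calm_\sigma,\opn(1)^{\oplus 2})\simeq M_2(\kappa)$, to $\calj_\sigma A = P\,\calj_\sigma$ for a unique $P\in M_2(\kappa)$. In terms of the symmetric matrices $Q_1,Q_2$ associated to $f_1,f_2$, this reads
\[
Q_iA = P_{i1}Q_1 + P_{i2}Q_2, \qquad i=1,2,
\]
and I set $\mu(\phi):=P$, a $\kappa$-algebra morphism sending $\mathrm{id}_{\Ts}$ to $I_2$. This $\mu$ is injective: if $P=0$, then every column of $A$ lies in $\ker Q_1\cap\ker Q_2$, which is zero by incompressibility (as in the proof of Lemma \ref{compressibility}).

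By Lemma \ref{it is equivariant}, $\sigma$ is $\SL_2$-equivariant, and $\mu$ is then equivariant for the natural conjugation action on $M_2(\kappa)=\End(V_1)=V_0\oplus V_2$, where $V_0=\kappa\cdot I_2$ and $V_2=\mathfrak{sl}_2$ is the adjoint representation. Hence $\im(\mu)$ is an $\SL_2$-submodule containing $V_0$; it remains to exclude $V_2\subset\im(\mu)$. Every nonzero $P\in V_2$ is $\SL_2$-conjugate either to $P_0=\bigl(\begin{smallmatrix}0&1\\0&0\end{smallmatrix}\bigr)$ or to $P_\lambda=\mathrm{diag}(\lambda,-\lambda)$ with $\lambda\ne 0$, so it is enough to treat these two cases. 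For $P_0$ the equation $Q_2A=0$ forces the columns of $A$ into $\ker Q_2$, hence $\rk(Q_1A)\le \rk A\le r_1$; but $Q_1A=Q_2$ would require $\rk(Q_1A)=n+1-r_1$, contradicting $r_1\le(n+1)/3$ from Lemma \ref{3r1}. For $P_\lambda$ the equations $Q_1(A-\lambda I)=0$ and $Q_2(A+\lambda I)=0$ place the columns of the nonzero matrix $2\lambda I_{n+1}$ into $\ker Q_1+\ker Q_2$, so $\ker Q_1+\ker Q_2=\kappa^{n+1}$, contradicting $\dim(\ker Q_1+\ker Q_2)\le 2r_1<n+1$ given by Lemma \ref{3r1}. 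Combined with injectivity of $\mu$, this yields $\End(\Ts)\simeq\kappa$.

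The main obstacle is setting up the $\SL_2$-equivariance of $\mu$ cleanly so that the decomposition $M_2(\kappa)=V_0\oplus V_2$ really matches the conjugation action induced by Lemma \ref{it is equivariant}; once this is in place, the rank-based contradictions for the two Jordan forms follow immediately from Lemma \ref{3r1}.
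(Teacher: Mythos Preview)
Your proof is correct and takes a genuinely different route from the paper's argument.

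The paper proceeds by a pure homological chase: starting from the same vanishing $\Ext^p_{\pn}(\calo_Y(H+F),\opn)=0$ for $p\le 2$ (which is exactly your codimension observation), it shows directly that $\End_{\pn}(\calm_\sigma)\simeq\Ext^1_{\pn}(\calo_Y(H+F),\calm_\sigma)\simeq\End_{\pn}(\calo_Y)\simeq\kappa$, using only that $Y$ is a smooth irreducible variety. Then, using $H^0(\calt_\sigma)=H^1(\calt_\sigma)=0$ and $\Ext^p_{\pn}(\calm_\sigma,\opn)=0$ for $p\le 1$, one reads off $\End_{\pn}(\calt_\sigma)\simeq\Ext^1_{\pn}(\calm_\sigma,\calt_\sigma)\simeq\End_{\pn}(\calm_\sigma)\simeq\kappa$. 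No explicit matrix equations, no rank estimates, no $\SL_2$-action are needed.

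Your approach shares the same opening (the codimension $\ge 3$ vanishing gives the unique lift $\phi\mapsto A$ and the identification $\Hom(\calm_\sigma,\opn(1)^{\oplus 2})\simeq M_2(\kappa)$), but then diverges: you translate the problem into the matrix identity $Q_iA=\sum_l P_{il}Q_l$ and finish with rank arguments powered by Lemma~\ref{3r1}. Two remarks. First, the appeal to $\SL_2$-equivariance, while correct, is not essential: once $\mu$ is an injective $\kappa$-algebra map containing $I_2$, any nonscalar element of $\im(\mu)$ yields a nonzero traceless element, and a change of generators of the pencil (a $\GL_2$-conjugation, which preserves $\calt_\sigma$ and the coranks) lets you assume it equals $P_0$ or $P_\lambda$; your two rank contradictions then apply verbatim. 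Second, since $V_2$ is irreducible in characteristic $\ne 2$, ruling out $P_0$ alone already suffices under the equivariance argument; treating $P_\lambda$ is redundant there, though it is exactly what makes the generator-change variant work. The paper's argument is shorter and more conceptual (it uses only the geometry of $Y$), whereas yours is more explicit and exhibits a concrete obstruction via the bound $3r_1\le n+1$.
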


\begin{proof}
We use the exact sequence in display \eqref{J4}. We first use its rightmost part, namely:
\begin{equation}
    \label{Nf}
0 \to \calm_\sigma \to \calo_\pn(1)^{\oplus 2} \to \calo_Y(H+F)\ \to 0.
\end{equation}

Since \(\calo_Y\) is a line bundle on the smooth irreducible variety \(Y\), we have \(\End_X(\calo_Y)\simeq \kappa\). Since the morphism \(\calo_\pn^{\oplus 2} \to \calo_Y(F) \) induces an isomorphism on global sections and \(H^p(\calo_Y(F))=0\) for \(p>0\), we get \(H^*(\calm_\sigma(-1))=0\).
Also, \(\Ext^p_{\pn}(\calo_Y(F+h),\calo_\pn(1))=0\) for \(p=0,1\) by Serre duality since \(\dim(Y)=n+1-2u<n-1\). Therefore applying \(\Hom_{\pn}(-,\calm_\sigma)\) and  \(\Hom_{\pn}(\calo_Y(H+F),-)\) to the exact sequence in display \eqref{Nf} we get:
\[
\End_{\pn}(\calm_\sigma) \simeq \Ext^1_{\pn}(\calo_Y(F+h),\calm_\sigma)\simeq 
\End_{\pn}(\calo_Y) \simeq  \kappa.
\]
Also, note that Serre duality gives \(\Ext^p_{\pn}(\calo_Y(F+h),\calo_\pn)=0\) for \(p=1,2\) as \(\dim(Y)=n+1-2u<n-2\). We deduce that \(\Ext^p_{\pn}(\calm_\sigma,\calo_\pn)=0\) for \(p=0,1\).

Next, we write the exact sequence:
\begin{equation} \label{Tf}
0 \to \calt_\sigma \to \calo_\pn ^{\oplus(n+1)} \to \calm_\sigma \to 0.     
\end{equation}

Applying \(\Hom_{\pn}(\calm_\sigma,-)\) to \eqref{Tf} and using that
\(\Ext^p_{\pn}(\calm_\sigma,\calo_\pn)=0\) for \(p=0,1\), we get:
\[
\Ext^1_{\pn}(\calm_\sigma,\calt_\sigma) \simeq \End_{\pn}(\calm_\sigma) \simeq \kappa.
\]
Finally, we apply \(\Hom_{\pn}(-,\calt_\sigma)\) to \eqref{Tf} and use that, since \(\sigma\) is incompressible, we have \(H^0(\calt_\sigma)=H^1(\calt_\sigma)=0\). This concludes the proof, since
\[
\End_{\pn}(\calt_\sigma) \simeq \Ext^1_{\pn}(\calm_\sigma,\calt_\sigma) \simeq \kappa.
\]
\end{proof}

Having established this lemma, let us continue the proof of Theorem \ref{stability-pencils}. This time, with a slight difference with respect to the proof of item \ref{<} given in Subsection \ref{stability regular}, we choose a general linear subspace \(M \subset \pn\) of dimension \(n-r_0-1\). In particular, we may assume that \(M\) does not meet \(\Xi_\sigma\) and that the \(p^{\textrm{th}}\) exterior power of \(\calk|_M \hookrightarrow \calt|_M\) gives a non-zero element of \(H^0(\bigwedge^{q} \calt^\vee|_M(-1))\), with \({q=n-1-p}\); this equality $q=n-1-p$ comes from duality of the sheaf $\calt$, which is of rank $n-1$.

If \(n\) is odd, we write \(n=2n_0+1\) and \(2r_0 < n+1\) gives \(r_0 \le n_0\), while \(p \ge (n-1)/2=n_0\) gives \(q = 2n_0-p \le n_0\). Since  \(\dim(M)=2n_0-r_0\ge n_0\) we get
\(q < \dim(M)\) unless \(\dim(M)=n_0\). If \(q < \dim(M)\), again the argument given in Subsection \ref{stability regular} remains valid, so we may assume, without loss of generality, that \(\dim(M)=n_0\). It then follows that $n_0=\dim(M)=2n_0-r_0$, thus $n_0=r_0$; since \(r_0=n+1-2u\), \(n_0\) is even, say \(n_0=2n_1\), and \(u=n_1+1\). Summing up, if \(n\) is odd, then:
\begin{equation} \label{odd} 
n=4u-3, \quad \dim(M)=p=q=r_0=2(u-1), \quad u \ge 2.    
\end{equation}
Similarly, if \(n=2n_0\), then we get that $r_0 \le n_0\le p$ and thus $q\le n_0-1\le\dim(M)$, so we may assume that \(\dim(M)=n_0-1\). It follows that $n_0=r_0=n+1-2u$ so $n_0$ is odd; setting $n_0=2n_1+1$, we obtain \(u=n_1+1\). Summarizing, if \(n\) is even, then:
\begin{equation} \label{even} 
n=4u-2, \quad \dim(M)=q=2(u-1), \quad r_0=p=2u-1,\quad u \ge 2. \end{equation}
In any case, the sheaves \(\calk\) and \(\call=\calt/\calk\) are slope-stable.

Having established these numerical constraints, we proceed with the next step of the proof, which requires looking at the exact sequence in display \eqref{J4}. Working in the linear span \(L=\p{n+1-u} \subset \pn\) of \(Y\), we write an exact commutative diagram
\[
\xymatrix@-2ex{
& 0 \ar[d] & 0 \ar[d] \\
& \cali_{Y/L}(1)^{\oplus 2} \ar[d] \ar@{=}[r]& \cali_{Y/L}(1)^{\oplus 2} \ar[d]\\
0 \ar[r] & \calm_L \ar[r] \ar[d]  & \calo_L(1)^{\oplus 2} \ar[r] \ar[d] & \calo_Y(H+F) \ar@{=}[d] \ar[r] & 0 \\
0 \ar[r] & \calo_Y(H-F) \ar[d] \ar[r] & \calo_Y(h)^{\oplus 2} \ar[d] \ar[r] & \calo_Y(H+F) \ar[r] & 0 \\
& 0 & 0 
}
\]
Here, the sheaf \(\calm_L\), defined by the middle row, can be thought of as the normal sheaf associated with the pencil of quadrics restricted to the smaller space \(L\).
Using the leftmost column of the previous diagram and the fact that the morphism \(\calo_\pn(1)^{\oplus 2} \to \calo_Y(H+F)\) in the exact sequence in display \eqref{J4} factors through \(\calo_L(1)^{\oplus 2} \to \calo_Y(H+F)\), we get an exact sequence:
\[
0 \to \cali_{L/\pn}(1)^{\oplus 2} \to \calm_\sigma \to \calm_L \to 0.
\]
Using this exact sequence and the one in display \eqref{J4} we get a second exact commutative diagram:
\[
\xymatrix@-3ex{
& 0 \ar[d] & 0 \ar[d] & 0 \ar[d] \\
0 \ar[r] & \calr_L(1)^{\oplus 2} \ar[d] \ar[r]& \calo_{\pn}^{\oplus 2u} \ar[r] \ar[d] &  \cali_{L/\pn}(1)^{\oplus 2n}\ar[d] \ar[r] & 0\\
0 \ar[r] & \calt \ar[r] \ar[d]  & \calo_\pn^{\oplus (n+1)} \ar[r] \ar[d] & \calm_\sigma \ar[d] \ar[r] & 0 \\
0 \ar[r] & \calg \ar[d] \ar[r] & \calo_\pn^{\oplus (n+1-2u)} \ar[d] \ar[r] & \calm_L \ar[r] \ar[d] & 0 \\
& 0 & 0 & 0
}
\]
Here the sheaf \(\calg\), defined with the bottom row, has \(c_1(\calg)=0\) and \(\rk(\calg)=n+1-2u\), while \(\calr_L(1)\) is the Koszul syzygy of \(L\) which we already proved to be stable of slope \(1/(1-u)\).

In view of the leftmost column of the previous diagram and of the slope-semistability of \(\calr_L(1)^{\oplus 2}\), the inclusion \(\calk \hookrightarrow \calt\) must descend to an inclusion \(\calk \hookrightarrow \calg\).
We get thus two injections with isomorphic cokernel:
\begin{equation} \label{distinguished hyperplane}
\calk \hookrightarrow \calg, \qquad \calr_L(1)^{\oplus 2} \hookrightarrow \call.    
\end{equation}
Denote by \(\calp\) this common cokernel sheaf, so \(\calp \simeq \calg/\calk \simeq \call/\calr_L(1)^{\oplus 2}\).
Then, note that independently on whether \(n\) is even or odd, we get:
\[
\rk(\calk)=p=n+1-2u=\rk(\calg).
\]

Therefore, since \((\bigwedge^p \calk)^{\vee \vee} \simeq \calo_\pn(-1)\), we get a hyperplane \(H \subset \pn\) as support \(\calp\).
Note that the hyperplane \(H\) is determined uniquely by \(\calt\). Indeed, if \(\calk' \hookrightarrow \calt\) is an embedding of any saturated destabilizing subsheaf of \(\calt\), then the induced morphism \(\calk' \to \call\) is either zero or an isomorphism, since \(\calk'\) and \(\call\) are stable of the same slope. If \(\calk' \to \call\) is zero then \(\calk' \to \calt\) factors through \(\calk\) so it determines the same hyperplane \(H\). If 
\(\calk' \to \call\) is an isomorphism, then \(\calt\) is decomposable, which is absurd by Lemma \ref{its simple}.

Now, recall from Lemma \ref{it is equivariant} that \(\calt_\sigma\) is equivariant for a natural \(\SL_2(\kappa)\)-action on \(\pn\). So the hyperplane \(H\) must be fixed by this action, in other words, it must correspond to a trivial summand \(V_0\) in the decomposition in display \eqref{SL2}.

Set \(t\) for the number of indices \(i \in \{1,\ldots,n+1-2u\}\) such that \(c_i=1\), so:
\begin{equation} \label{inequality t}
u = t+\sum_{i=t+1}^{n+1-2u} c_i \ge 2(n-2u-t+1) + t.    
\end{equation}

If \(t \ge 3\), then we can equip \(Y\) (and consequently \(\calt_\sigma\)) with a further \(\SL_2(\kappa)\)-action by letting \(\SL_2(\kappa)\) operate as \(V_{t-1} \otimes \calo_{\p 1}(1)\) on the summands of \(\calc_\sigma\) of the form \(\calo_{\p 1}(1)\).  Again we obtain that \(H^0(\calo_\pn(1))\) contains no copy of \(V_0\). In all these cases the \(\SL_2(\kappa)\)-fixed hyperplane \(H\) cannot exist and we conclude that \(\calt_\sigma\) is stable.

Finally if \(t \le 2\) then using \eqref{inequality t}, depending on whether \(n\) is odd or even, we get from \eqref{odd} or \eqref{even} that \(u \le 2\) or \(u \le 1\), which leaves the only case \(n=5\), \(u=2\), \(c_1=c_2=1\). This last case corresponds to the pencil of quadrics \(\sigma = (x_1x_5+x_3x_4,x_2x_4+x_0x_5)\). For this explicit pencil, direct computation shows that \(H^0\Big(\big(\bigwedge^2 \calt_\sigma\big)^{\vee}(-1)\Big)=0\) so that \(\calt_\sigma\) is stable.

\begin{remark} \label{remark on GIT}
One may check that GIT-semistability of $\sigma$ implies slope-semistability of $\calt_\sigma$. Indeed,  \cite[Theorem 3.1]{AL} says that GIT-semistability of $\sigma$ amounts to $\sigma$ being regular with $\calc_\sigma$ supported at $\lambda_1,\ldots,\lambda_\ell$ with the condition that for all $j \in \{1,\ldots,\ell\}$, $\lambda_j$ has multiplicity at most $(n+1)/2$ as a root of $\det(\rho_\sigma)$. In terms of the Segre symbol (see the introduction or the next paragraph), this means that $\sum_{i=1}^{s_j} a_{j,i} p_{j,i} \le (n+1)/2$. This implies $\sum_{i=1}^{s_j} p_{j,i} \le (n+1)/2$, which amounts to $r({\lambda_j}) \le (n+1)/2$, so $\calt_\sigma$ is slope-semistable, as $r_0=\max \{\sum_{i=1}^{s_j} p_{j,i} \mid j \in \{1,\ldots,\ell\}\}$.
However, the converse implication fails as one can see reverting the argument or considering that there are irregular pencils $\sigma$ having a slope-semistable sheaf $\calt_\sigma$.
\end{remark}

\section{Projective dimension for pencils of quadrics} \label{pdim}

Also in this section, \(\kappa\) is an algebraically closed field of characteristic different from 2.
Let $\sigma$ be an incompressible pencil of quadrics and adopt the notation from the previous section.
Note that \(p_{j,i} \le n\) for all indices \((i,j)\). The splitting type \((u,v)\) satisfies:
\begin{equation} \label{just sum}
\sum_{j=1}^\ell \sum_{i=1}^{s_j} a_{j,i}p_{j,i} = h^0(\calc_\t) = u-v,
\end{equation}
according to the exact sequence in display \eqref{reg part sqc}.

\subsection{Ext sheaves} \label{ext sheaves}

The main result of this section provides necessary and sufficient conditions in terms of \(\Sigma\) for the Ext sheaves $\inext_{\pn}^q(\calt_\sigma,\calo_\pn)$ to be non-trivial.

\begin{theorem} \label{extheorem}
For a pencil of quadrics \(\sigma\) with Segre symbol \(\Sigma\) and for \(q>0\), we have \(\inext_{\pn}^q(\calt_\sigma,\calo_\pn) \ne 0\) if and only if there are \(j \in \{1,\ldots,\ell\}\) and \(k \in \{1,\ldots,s_j\}\) such that:
\begin{equation} \label{exts}
q + p_{j,1}+\ldots+p_{j,k} = n-r_1-1,
\end{equation}
or \(r_1>0\) and \(q+r_1=n-2\).
\end{theorem}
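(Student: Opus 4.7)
The strategy is to translate the statement into one about \(\calq_\sigma\). Dualizing the exact sequences \eqref{tn-sec} and \eqref{nq-sec} as in the proof of Lemma \ref{lf no codim 3} yields, for all \(q \ge 1\), the identification \(\inext^q_{\pn}(\calt_\sigma,\calo_\pn) \simeq \inext^{q+2}_{\pn}(\calq_\sigma,\calo_\pn)\). Setting \(q'=q+2\), the task becomes to show that, in the range \(q'\ge 3\), the sheaf \(\inext^{q'}_{\pn}(\calq_\sigma,\calo_\pn)\) is non-zero precisely when \(q'=n+1-r_1-(p_{j,1}+\cdots+p_{j,k})\) for some admissible pair \((j,k)\), or \(q'=n-r_1\) (the latter requiring \(r_1>0\)). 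Each of these degrees should correspond, geometrically, to the codimension of a specific stratum in the degeneracy locus \(\Xi_\sigma\).

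The next step is to exploit the normal form of the pencil given by Proposition \ref{recover} and Proposition \ref{prop:segre-inv}: \(\rho_\sigma\) decomposes as a block sum, with one regular block for each pair \((j,i)\) (appearing with multiplicity \(p_{j,i}\)) together with singular blocks \(\tau_{c_1},\ldots,\tau_{c_{r_1}}\) assembling into the scroll \(Y\). The support \(\Xi_\sigma=Y\cup \bigcup_j \Lambda_j\) of \(\calq_\sigma\) (Lemma \ref{Jacobian union}) then carries a finer stratification: each \(\Lambda_j\) admits a natural flag of linear subspaces \(\Lambda_j^{(1)}\subset \cdots \subset \Lambda_j^{(s_j)}=\Lambda_j\), where \(\Lambda_j^{(k)}\) is spanned by the kernel directions contributed by the \(k\) largest blocks at \(\lambda_j\) and has codimension \(n+1-(p_{j,1}+\cdots+p_{j,k})\) in \(\pn\). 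The central claim will be that \(\calq_\sigma\) admits a natural filtration whose successive quotients are (possibly twisted) structure sheaves of the strata \(\Lambda_j^{(k)}\) and of \(Y\). Each such quotient is Cohen--Macaulay and therefore contributes a single non-vanishing \(\inext\) in the degree equal to its codimension in \(\pn\); adding these contributions produces exactly the list of degrees predicted by the theorem.

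To establish the filtration, we plan to compute the stalks of \(\calq_\sigma\) at general points of each stratum explicitly, using coordinates adapted to the block form of \(\calj_\sigma\). At a general point of \(\Lambda_j\) lying away from the smaller strata and from \(Y\), the stalk will be identified with \(\calo_{\Lambda_j}\), while the embedded pieces supported on the smaller \(\Lambda_j^{(k)}\) arise as sub-modules coming from the larger Segre blocks \(a_{j,1}>a_{j,2}>\cdots\); the completely irregular case is already close at hand through the exact sequence \eqref{J4}, which identifies \(\calq_\sigma\) with \(\calo_Y(H+F)\). The main obstacle will be the general irregular case with non-trivial torsion: there, \(Y\) meets each \(\Lambda_j\) in a prescribed subscheme, the two local pictures interact, and in particular the factor \(r_1\) entering the formula \(q+(p_{j,1}+\cdots+p_{j,k})=n-r_1-1\) has to be accounted for via this interaction. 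We expect to address this by combining the equivariant description of Lemma \ref{it is equivariant} for completely irregular pencils with an inductive reduction on the splitting type \((u,v)\), thereby reducing the general irregular case to the completely irregular situation plus a regular part that is locally well understood.
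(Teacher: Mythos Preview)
Your high-level strategy matches the paper's: reduce to \(\inext^{q+2}_{\pn}(\calq_\sigma,\calo_\pn)\), filter \(\calq_\sigma\) by Cohen--Macaulay pieces supported on strata of \(\Xi_\sigma\), and read off the codimensions. The implementation, however, diverges from the paper in two places where your outline is either vaguer than necessary or heading in a harder direction.

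First, the paper does not try to build the filtration of \(\calq_\sigma\) by inspecting stalks of \(\calj_\sigma\) directly. Instead it works on the \(\p1\) side: the sheaf \(\calc_\sigma=\coker(\rho_\sigma)\) has an obvious filtration coming from the block structure \(\rho_a\), namely \(\cald^{(k)}\subset\calc_\t\) with quotients \(\calc^{(k)}=\calo_{\lambda^{(a_k-a_{k+1})}}^{\oplus q_k}\). One then uses the incidence correspondence \(\p{}(\calc_\sigma)\subset\p1\times\pn\) of diagram~\eqref{correspondence} to push this filtration forward to \(\Xi_\sigma\). The resulting strata \(\Upsilon^{(k)}\) are \emph{not} reduced linear subspaces as your \(\Lambda_j^{(k)}\) suggest, but projectivizations of trivial bundles over fat points of \(\p1\); they are nilpotent thickenings of linear spaces, of multiplicity \(a_k-a_{k+1}\). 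This does not affect the codimension (hence the \(\inext\) degree), but your stalk computation ``at a general point of \(\Lambda_j\)'' would only see the reduced structure and would not by itself exhibit the filtration. The paper also uses the fact that the strata \(\Upsilon^{(k)}\) share no common component to conclude that the boundary maps in the long exact \(\inext\) sequence vanish; you should make this explicit.

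Second, your plan for the irregular case---invoking the \(\SL_2\)-equivariance of Lemma~\ref{it is equivariant} and an induction on the splitting type---is substantially more complicated than what the paper does, and the equivariance (which was introduced for the stability argument, not for this one) is not needed here. The paper simply enlarges the filtration on \(\p1\) by tensoring with \(\calc_\tf\): one replaces \(\cald^{(k)}\) by \(\hat\cald^{(k)}=\cald^{(k)}\oplus\calc_\tf(-a_{k+1})\), so that the bottom piece is \(\hat\cald^{(0)}=\calc_\tf(-a_1)\), whose projectivization is the scroll \(Y\). The quotients \(\hat\calc^{(k)}\) now have rank \(q_k+r_1\) instead of \(q_k\), which is exactly where the \(r_1\) in \eqref{exts} comes from---no separate interaction argument is required. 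The scroll \(Y\) itself, being smooth of codimension \(n-r_1\), contributes the extra degree \(q=n-r_1-2\).
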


\begin{proof}
We prove the theorem under the assumption that \(\sigma\) is incompressible, see the end of the proof for compressible pencils.
Since the question is local and \(\calt_\sigma\) is free of rank \(n-1\) locally around any point outside \(\Xi_\sigma\), it is enough to prove the claim on charts containing a single primary component of \(\Xi_\sigma\). In view of Lemma \ref{Jacobian union}, these components are supported either at disjoint linear subspaces associated with distinct points in the support of \(\calc_\t\), or at the rational normal scroll \(Y\). Note that the points of the support of \(\calc_\t\) correspond to the parenthesized pieces of the Segre symbol. Also, the proof at the points of \(Y\) is similar if the support of \(\calc_\t\) contains one point or many.
So we may assume, without loss of generality, that \(\ell=1\) and simplify the notation to \(\Sigma = (a_1^{p_1},\ldots,a_s^{p_s})\) with 
\(a_1 > \cdots > a_s > 0\).

As we did in the proof of item \ref{<} of Theorem \ref{stability-pencils} (see Section \ref{stability regular}), we observe that the sheaf \(\calq_\sigma\) is a line bundle supported at \(\Xi_\sigma\). Therefore, given \(q > 0\), we have
\(\inext_{\pn}^{q+2}(\calq_\sigma,\calo_\pn) \ne 0\) if and only if
\(
\inext_{\pn}^{q+2}(\calo_{\Xi_\sigma},\calo_\pn) \ne 0,
\)
so this in turn is equivalent to 
\(
\inext_{\pn}^{q}(\calt_\sigma,\calo_\pn) \ne 0.
\)

Let us analyze \(\Xi_\sigma\) more in detail and recall the notation of Subsection \ref{Jacobian of irregular}.
To make the proof more transparent, we carry it out first under the assumption that \(\sigma\) is regular, hence \(\rho=\rho_\sigma\), \(r_1=0\), \((u,v)=(n+1,0)\), \(\calc_\tf=0\) and \(\calc_\t=\calc_\sigma\).
For any \(k \in \{1,\ldots,s\}\), projectivizing the surjection \(\cald^{(k)} \twoheadrightarrow \calc^{(k)}\) we get a closed embedding \(\p{}(\calc^{(k)}) \hookrightarrow \p{}(\cald^{(k)})\) of schemes of the same dimension, with a residual subscheme of \(\p{}( \calc^{(k)})\) in \(\p{}(\cald^{(k)})\) which is isomorphic to \(\p{}(\cald^{(k-1)})\) and has thus strictly smaller dimension -- by convention, \(\p{}(\cald^{(0)}) = \emptyset\).
Note that, for each \(k \in \{1,\ldots,s\}\), the exact sequence 
\[
 0 \to \cali_{\p{}(\calc^{(k)})/\p{}(\cald^{(k)})} \to \calo_{\p{}(\cald^{(k)})} \to \calo_{\p{}(\calc^{(k)})} \to 0
\]
induces an exact sequence :
\[
 0 \to H^0(\cali_{\p{}(\calc^{(k)})/\p{}(\cald^{(k)})}) \to H^0(\calo_{\p{}(\cald^{(k)})}) \to H^0(\calo_{\p{}(\calc^{(k)})}) \to 0.
\]
Thus, since  \(\cali_{\p{}(\calc^{(k)})/\p{}(\cald^{(k)})}\) is supported at \(\p{}(\cald^{(k-1)})\), we have an isomorphism \(H^0(\cali_{\p{}(\calc^{(k)})/\p{}(\cald^{(k)})}) \simeq H^0(\calo_{\p{}(\cald^{(k-1)})})\), which in turn implies that 
\(\cali_{\p{}(\calc^{(k)})/\p{}(\cald^{(k)})}\) is isomorphic to 
\(\calo_{\p{}(\cald^{(k-1)})}\).

Recalling the correspondence \eqref{correspondence}, we send this filtration to \(\pn\) and define, for each \(k \in \{1,\ldots,s\}\) the subschemes  \(\Xi^{(k)}=\psi(\p{}(\cald^{(k)})\times_{\p1}\p{}(\calc_\sigma)) \subset \pn\) and \(\Upsilon^{(k)}=\psi(\p{}(\calc^{(k)}) \times_{\p1}\p{}(\calc_\sigma)) \subset \pn\).
Since \(\psi\) is an embedding on the fibres of \(\varphi\), this gives  \(\Upsilon^{(k)} \subset \Xi^{(k)}\) for each \(k \in \{1,\ldots,s\}\) and 
finally a stratification:
\begin{align}
\label{filtration Xi}& 0 = \calo_{\Xi^{(0)}} \subset 
\calo_{\Xi^{(1)}} \subset \cdots \subset \calo_{\Xi^{(s-1)}} \subset 
\calo_{\Xi^{(s)}}=\calo_{\Xi_\sigma}, &&\mbox{with}:   \\
\label{quotients Upsilon} & \calo_{\Xi^{(k)}}/\calo_{\Xi^{(k-1)}} \simeq \calo_{\Upsilon^{(k)}},
\end{align}
for all each \(k \in \{1,\ldots,s\}\), with \(\Xi^{(1)}=\Upsilon^{(1)}\).
Each component \(\Upsilon^{(k)}\) is the projectivization of a trivial bundle of rank \(q_k\) over a subscheme of length \(a_k\) in \(\p1\). As such, it is equidimensional and Cohen--Macaulay of codimension \(n-q_k+1\) and therefore satisfies: 
\begin{equation} \label{Ext Upsilon}
\inext_{\pn}^{q+2}(\calo_{\Upsilon^{(k)}},\calo_\pn) \simeq \left\{
\begin{array}{ll}
    0, & \mbox{if \(q+2 \ne n-q_k+1\)}, \\
    \omega_{\Upsilon^{(k)}}(n+1), & \mbox{if \(q+2=n-q_k+1\)}.
\end{array}\right.    
\end{equation}
So this sheaf is non-zero if and only if \(q=n-p_1-\cdots-p_k-1\), which in turn agrees with \eqref{exts}.

We apply \(\inext_{\pn}^{*}(-,\calo_\pn)\) to the filtration \eqref{filtration Xi}. To compute this we use \eqref{quotients Upsilon} and induction on \(k \in \{0,\ldots,s\}\).
Since for all \(k \in \{1,\ldots,s\}\) the sheaves \eqref{Ext Upsilon} are line bundles supported on subschemes sharing no common component, the boundary morphisms induced by applying  \(\inext_{\pn}^{*}(-,\calo_\pn)\) to \eqref{filtration Xi} are all zero. We deduce that \(\inext_{\pn}^{q+2}(\calo_{\Xi_\sigma},\calo_\pn) \ne 0\) if and only if there is \(k \in \{1,\ldots,s\}\) such that \eqref{exts} is satisfied. This concludes the proof when \(\sigma\) is regular.
\bigskip

Now let us assume that \(\sigma\) is irregular.
Restricting \(\calc_\tf\) to each of the subschemes \(\lambda^{(a_1)},\ldots,\lambda^{(a_s)}\), we obtain the sheaves:
\begin{equation} \label{hatd}
\hat \cald^{(k)} = \cald^{(k)} \oplus \calc_\tf(-a_{k+1}), \qquad \hat \calc^{(k)} = \hat \cald^{(k)}/\hat \cald^{(k-1)} = \calo_{\lambda^{(a_k-a_{k+1})}}^{\oplus (q_k+r_1)},
\end{equation}
with the filtration:
\begin{equation} \label{filtration irregular}
\calc_\tf(-a_1) = \hat \cald^{(0)} \subset \hat \cald^{(1)} \subset \cdots \subset \hat \cald^{(s)} = \calc_\sigma.
\end{equation}

Again, for any \(k \in \{1,\ldots,s\}\), 
we get an  embedding \(\p{}(\hat \calc^{(k)}) \hookrightarrow \p{}(\hat \cald^{(k)})\) of schemes of the same dimension. The residual subscheme is \(\p{}(\hat \cald^{(k-1)})\) has strictly smaller dimension -- this time \(\p{}(\hat \cald^{(0)}) = Y\).
The component \(Y\) of \(\Xi_\sigma\) is a rational normal scroll over \(\p1\).
We denote by \(F\) the divisor class of a fiber of the scroll map \(Y \to \p1\).

Using the diagram \eqref{correspondence} we define, for each \(k \in \{1,\ldots,s\}\) the subschemes  \(\hat \Xi^{(k)}=\psi(\p{}(\hat \cald^{(k)})\times_{\p1}\p{}(\calc_\sigma)) \subset \pn\) and \(\hat \Upsilon^{(k)}=\psi(\p{}(\hat \calc^{(k)}) \times_{\p1}\p{}(\calc_\sigma)) \subset \pn\). 
We get \(\hat \Upsilon^{(k)} \subset \hat \Xi^{(k)}\). 
Note that \(\psi_*(\varphi^*(\calo_{\p 1}(1))) \simeq \calo_Y(F)\) and that \(F|_{\Upsilon^{(k)}}=0\). Hence, in view of \eqref{hatd} we obtain for each \(k \in \{1,\ldots,s\}\) an exact sequence:
\begin{equation} \label{Xi and Upsilon}
0 \to \calo_{\hat \Xi^{(k-1)}}(-a_k F) \to \calo_{\hat \Xi^{(k)}} \to\calo_{\hat \Upsilon^{(k)}} \to 0.
\end{equation}

With our convention, \(Y=\hat \Xi^{(0)}=\hat \Upsilon^{(0)}\) so for \(k=1\) the leftmost term of the above sequence is \(\calo_{\hat \Xi^{(0)}}(-a_1 F) \simeq \calo_Y(-a_1 F)\).

We have obtained a stratification of \(\Xi_\sigma\) that allows us to compute the desired Ext sheaves.
Indeed, to compute \(\inext_{\pn}^{q+2}(\calo_{\Xi_\sigma},\calo_\pn) \) for \(q>0\) we apply \(\inext_{\pn}^{*}(-,\calo_\pn)\) to \eqref{Xi and Upsilon}  and use induction on \(k \in \{0,\ldots,s\}\) together with twists by \(\calo_Y(tF)\) for suitable \(t \in \Z\).
For \(k=0\) we observe that, since \(Y \subset \pn\) is smooth of codimension \(n-r_1\), for any \(t \in \Z\) we have \(\inext_{\pn}^{q+2}(\calo_Y(t F),\calo_\pn) \ne 0 \) 
if and only if \(q=n-r_1-2\).
For \(k \ge 1\), \(\hat \Upsilon^{(k)}\) is the projectivization of a trivial bundle of rank \(q_k+r_1\) over a subscheme of length \(a_k\) in \(\p1\), so:
\begin{equation} \label{Ext hat Upsilon}
\inext_{\pn}^{q+2}(\calo_{\hat \Upsilon^{(k)}},\calo_\pn) \ne 0
 \qquad \mbox{if and only if  \(q = n-r_1-q_k-1\)},
\end{equation}
and this sheaf is \(\omega_{\hat \Upsilon^{(k)}}(n+1)\) if \(q= n-r_1-q_k-1\).
Again, since these sheaves are line bundles on  \(\hat \Upsilon^{(0)},\ldots,\hat \Upsilon^{(s)}\) and since these subschemes have no common component, we have the vanishing of all the boundary morphisms of the long exact sequence obtained by applying \(\inext_{\pn}^{*}(-,\calo_\pn)\) to \eqref{Xi and Upsilon}.
Therefore, \(\inext_{\pn}^{q+2}(\calo_{\Xi_\sigma},\calo_\pn) \ne 0\) if and only if \(q=n-r_1-p_1-\cdots-p_k-1\) for some \(k\in \{1,\ldots,s\}\) or \(q=n-r_1-2\). This concludes the proof if \(\sigma\) is incompressible.

\medskip

Finally, if \(\sigma\) has compressibility \(m\) with \(1 \le m \le n\), then we set \(\hat n=n-m\) as in Lemma \ref{split compressible} and work with the incompressible pencil of quadrics \(\hat \sigma\) in \(\p {\hat n}\) associated with \(\sigma\).
We obtained already a stratification of \(\Xi_{\hat \sigma}\) by Cohen--Macaulay closed subschemes of \(\p {\hat n}\) which are projective bundles over subschemes of \(\p 1\), or the scroll \(Y\). The equations of these subschemes, viewed in \(\p n\) define cones over such subschemes, which are still Cohen--Macaulay of the same codimension. Therefore, for all \(q >0\), we have \(\inext_{\pn}^q(\calt_\sigma,\calo_\pn) \ne 0\) if and only if 
\(\inext_{\p {\hat n}}^q(\calt_{\hat \sigma},\calo_{\p {\hat n}}) \ne 0\).
This concludes the proof.
\end{proof}

Let us give a couple of explicit examples to show the stratification appearing in the proof of the theorem.

\begin{example} \label{632}
Consider a regular pencil with Segre symbol \([(6^3,3^4,2^3)]\), so that \(\ell=1\), \(s=3\), and \((a_1,a_2,a_3)=(6,3,2)\), \((p_1,p_2,p_3)=(3,4,3)\). We have a torsion sheaf \(\calc_\sigma=\coker(\rho)\) with \(h^0(\calc_\sigma)=18+12+6=36=n+1,\) so \(n=35\) and \(\rho=\rho_6^{\oplus 3} \oplus\rho_4^{\oplus 4} \oplus \rho_2^{\oplus 3}\). 
We have:
\[
\calc_\sigma = \cald^{(3)} = \calo_{\lambda^{(6)}}^{\oplus 3} \oplus \calo_{\lambda^{(3)}}^{\oplus 4} \oplus \calo_{\lambda^{(2)}}^{\oplus 3}.
\]

The Jacobian subscheme \(\Xi_\sigma=\Xi^{(3)}\) is set-theoretically a linear subspace of \(\p{35}\) of dimension \(3+4+3-1=9\). 
We have \(\calc^{(s)} = \calc^{(3)} = \calo^{\oplus 10}_{\lambda^{(2)}}\).
The scheme \(\Xi_\sigma\) contains 
\(\Upsilon^{(3)}=\psi_*(\varphi^*(\calc^{(3)}))\) which is a double structure over \(\p{9} \subset \p{35}\).
We have:
\[
\cald^{(2)} = \calo_{\lambda^{(4)}}^{\oplus 3} \oplus \calo_{\lambda^{(1)}}^{\oplus 4}, \qquad 
\calc^{(2)} = \calo_{\lambda^{(1)}}^{\oplus 7},
\]

Note that \(\Upsilon^{(2)} = \psi_*(\varphi^*(\p{}(\calc^{(2)})))\) and \(\Xi^{(2)} = \psi_*(\varphi^*(\p{}(\cald^{(2)})))\) have dimension 6. The residual subscheme of \(\Upsilon^{(3)}\) in \(\Xi^{(3)}=\Xi_\sigma\) is \(\Xi^{(2)}\). This is  set-theoretically a \(\p6\) and contains \(\Upsilon^{(2)}\) which is a reduced \(\p6\).
We have:
\[
\cald^{(1)} = \calc^{(1)} =  \calo_{\lambda^{(3)}}^{\oplus 3}.
\]
The residual subscheme  of \(\Upsilon^{(2)}\) in \(\Xi^{(2)}\) is \(\Xi^{(1)}=\Upsilon^{(1)}\). This is a triple structure on \(\p2\).
For \(q>0\), we have \(\inext_{\pn}^q(\calt_\sigma,\calo_\pn) \ne 0\) if and only \(q \in \{24, 27, 31\}\).
\end{example}

\begin{example} \label{a big one}
An incompressible pencil with \(r_1=3\), \((c_1,c_2,c_3)=(1,2,2)\) and Segre symbol \(\Sigma=[(3^2,1^4),(4^5,3^2,2^3)]\), hence with \((u,v)=(47,5)\), lives in \(\p{54}\). Its Jacobian scheme consists of a rational normal scroll \(Y\) of dimension \(3\) and degree \(5\) spanning a \(\p 7\) and of two linear spaces \((\Xi_1)_\redu\) and \((\Xi_2)_\redu\) of dimension 5 and 9 meeting \(Y\) along two disjoint projective planes appearing as fibers of the scroll. The subscheme \((\Xi_1)\) contains a simple \(\p 5\) with a double line as residual subscheme. On the other hand the subscheme \((\Xi_2)\) contains a double \(\p 9\), whose residual subscheme still contains a simple \(\p 6\) with a simple \(\p 4\) as residual subscheme.

We have \(\ell = 2\) and \(\inext_{\pn}^q(\calt_\sigma,\calo_\pn) \ne 0\) if and only \(q=54-2-r_1\) or \(q=54-1-r_1-p_{1,1}\) or \(q=54-1-r_1-p_{1,1}-p_{1,2}\)  
or \(q=54-1-r_1-p_{2,1}\)  or \(q=54-1-r_1-p_{2,1}-p_{2,2}\)  or \(q=54-1-r_1-p_{2,1}-p_{2,2}-p_{2,3}\) which gives \( q \in \{40,43,44,45,48,49\}\).
\end{example}

From the proof of the previous theorem, we extract some precise information on the primary components of \(\Xi_\sigma\).
Assume \(\sigma\) is an incompressible pencil of quadrics having Segre symbol \(\Sigma\) and degree vector \(\mathbf{c}\), with:
 \begin{align*}
 &\Sigma = [\Sigma_1,\ldots,\Sigma_\ell], && 
 \Sigma_j = (a_{j,1}^{p_{j,1}}, \ldots, a_{j,s_j}^{p_{j,s_j}}), \\
 &\mathbf{c}=(c_1,\ldots,c_{r_1}), && 
 \end{align*}
for some integers \(r_1,\ell, s_1,\ldots, s_\ell\), \(\{(a_{j,i}, p_{j,i}) \mid j\in \{1,\ldots,\ell\},i \in \{1,\ldots,s_j\}\) with \(a_{j,1} > \cdots > a_{j,s_j}\) for all \(j\in \{1,\ldots,\ell\}\) and \(1 \le c_1 \le \cdots \le c_{r_1}\).
Recall the convention \(a_{j,i}=0\) for \(i>s_j\) and for each \(j\in \{1,\ldots,\ell\}\) set \(q_{j,k}-1=\sum_{i=1}^k p_{j,i}-1\).

\begin{corollary} \label{components of Xi}
Let \(\sigma\) be an incompressible pencil of quadrics.
 \begin{enumerate}[label=\roman*)]
     \item If \(\sigma\) is regular, then the Jacobian scheme \(\Xi_\sigma\) has primary components:
     \[
     \Upsilon_j^{(k)}, \qquad \mbox{for \(j \in \{1,\ldots,\ell\}\) and \(k \in \{1,\ldots,s_j\}\)},
     \]
     where the components \(\Upsilon_j^{(k)}\) are projective spaces of dimension \(q_{j,k}-1\) over subschemes of length \(a_{j,i}-a_{j,k+1}\) of \(\p 1\). We have:
     \[
     h^0(\calo_{\Xi_\sigma})  = \sum_{j=1}^\ell a_{j,1}, \qquad \Upsilon_j^{(k)} \cap \Upsilon_{j'}^{(k')} = \emptyset, \, \mbox{if}\, \,  j \ne j'.
     \]
     \item If \(\sigma\) is irregular, then the Jacobian scheme \(\Xi_\sigma\) consists of a smooth scroll \(Y\) of dimension \(r_1\) and degree \(v=\sum_{i=1}^{r_1}c_i\) and of the primary components:
     \[
     \hat \Upsilon_j^{(k)}, \qquad \mbox{for \(j \in \{1,\ldots,\ell\}\) and \(k \in \{1,\ldots,s_j\}\)},
     \]
     where the components \(\hat \Upsilon_j^{(k)}\) are projective spaces of dimension \(r_1+q_{j,k}-1\) over subschemes of length \(a_{j,i}-a_{j,k+1}\) of \(\p 1\). Also:
     \[
     h^0(\calo_{\Xi_\sigma})  = 1, \qquad \hat \Upsilon_j^{(k)} \cap \hat \Upsilon_{j'}^{(k')} = \emptyset, \, \mbox{if}\, \,  j \ne j'.
     \]
     Finally, setting \(\hat \Xi\) for the residual scheme of \(Y\) in \(\Xi_\sigma\), we have:
     \[
     h^0(\calo_{\hat \Xi})  = \sum_{j=1}^\ell a_{j,1}.
     \]
     \end{enumerate}

\end{corollary}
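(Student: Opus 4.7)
Both items will follow by unpacking the stratification constructed in the proof of Theorem \ref{extheorem} and invoking the description of the scroll \(Y \subset \pn\) from Subsection \ref{Jacobian of irregular}. For each support point \(\lambda_j\) of \(\calc_\t\), I run separately the filtration
\(0 = \cald_j^{(0)} \subset \cdots \subset \cald_j^{(s_j)} = (\calc_\t)_{\lambda_j}\)
with successive quotients \(\calc_j^{(k)} = \calo_{\lambda_j^{(a_{j,k}-a_{j,k+1})}}^{\oplus q_{j,k}}\) (with the convention \(a_{j,s_j+1}=0\)). Through the correspondence \eqref{correspondence}, the primary components are
\(\Upsilon_j^{(k)} = \psi(\p{}(\calc_j^{(k)}) \times_{\p 1} \p{}(\calc_\sigma))\).
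Since \(\calc_j^{(k)}\) is a trivial rank-\(q_{j,k}\) bundle over \(\lambda_j^{(a_{j,k}-a_{j,k+1})}\) and \(\psi\) restricts to an isomorphism where \(\calq_\sigma\) is a line bundle, \(\Upsilon_j^{(k)}\) is a trivial \(\p{q_{j,k}-1}\)-bundle over an Artinian subscheme of \(\p 1\) of length \(a_{j,k}-a_{j,k+1}\). Disjointness of \(\Upsilon_j^{(k)}\) from \(\Upsilon_{j'}^{(k')}\) for \(j\ne j'\) follows immediately from Lemma \ref{they are disjoint}.

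In the regular case, each \(\Upsilon_j^{(k)}\) has \(h^0(\calo_{\Upsilon_j^{(k)}}) = a_{j,k}-a_{j,k+1}\) (the projective fibre contributes \(1\), the Artinian base contributes its length). The filtration \eqref{filtration Xi}, run for each \(j\) independently, displays \(\calo_{\Xi_\sigma}\) as an iterated extension of the \(\calo_{\Upsilon_j^{(k)}}\). Since distinct \(j\)'s contribute disjoint components, I reduce to the case \(\ell=1\); within each \(j\), the kernels in the filtration are supported on Artinian schemes, so their higher cohomology vanishes, the boundary maps in cohomology are zero, and telescoping over \(k\) then over \(j\) yields \(h^0(\calo_{\Xi_\sigma}) = \sum_{j} a_{j,1}\).

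In the irregular case, \(Y\) is smooth of dimension \(r_1\) and degree \(v = \sum_i c_i\) by Subsection \ref{Jacobian of irregular}, and replacing \(\calc_j^{(k)}\) by \(\hat \calc_j^{(k)} = \calc_j^{(k)} \oplus \calc_\tf(-a_{j,k+1})\) yields components \(\hat \Upsilon_j^{(k)}\) of dimension \(r_1+q_{j,k}-1\), still disjoint across \(j\) by Lemma \ref{they are disjoint}. Each \(\hat \Upsilon_j^{(k)}\) meets \(Y\) along the fibre of the scroll over \(\lambda_j\), so \(\Xi_\sigma\) is connected. The residual scheme \(\hat \Xi\) of \(Y\) in \(\Xi_\sigma\) carries, by \eqref{Xi and Upsilon}, the filtration obtained by dropping the initial term \(\calo_Y\); the same telescoping as above then gives \(h^0(\calo_{\hat \Xi}) = \sum_j a_{j,1}\). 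Finally, \(h^0(\calo_{\Xi_\sigma}) = 1\) is obtained from the Mayer--Vietoris sequence
\(0 \to \calo_{\Xi_\sigma} \to \calo_Y \oplus \calo_{\hat \Xi} \to \calo_{Y\cap \hat \Xi} \to 0\)
once I establish \(h^0(\calo_{Y \cap \hat \Xi}) = \sum_j a_{j,1}\): each \(\hat \Upsilon_j^{(k)}\) intersects \(Y\) in an \((a_{j,k}-a_{j,k+1})\)-multiple of the fibre over \(\lambda_j\), and summing over \(k\) telescopes again to \(a_{j,1}\).

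The main obstacle is the identification of the residual scheme \(\hat \Xi\) in the irregular case, i.e.\ verifying that the colon ideal \((\cali_{\Xi_\sigma}:\cali_Y)\) agrees with the scheme built from \eqref{Xi and Upsilon} after stripping off the initial term \(\calo_Y\). This is what rules out embedded structure along \(Y\) that would contaminate \(\hat \Xi\) and is what ultimately secures both telescoping computations, as well as the Mayer--Vietoris cancellation giving \(h^0(\calo_{\Xi_\sigma})=1\).
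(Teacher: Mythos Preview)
Your approach is essentially the paper's: extract the primary components from the stratification built in the proof of Theorem \ref{extheorem}, invoke Lemma \ref{they are disjoint}/\ref{Jacobian union} for disjointness across distinct \(j\), and telescope \(\sum_k (a_{j,k}-a_{j,k+1}) = a_{j,1}\) for the \(h^0\) counts.

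The one place you diverge is the irregular case \(h^0(\calo_{\Xi_\sigma})=1\). The paper simply says \(\Xi_\sigma\) is connected (since \(Y\) meets every \(\hat\Upsilon_j^{(k)}\)) and concludes. You instead route through Mayer--Vietoris on \(Y\cup\hat\Xi\), and then flag the identification of \(\hat\Xi\) as the main obstacle. This concern is overcautious: once you have the primary decomposition of \(\Xi_\sigma\) as \(Y\) together with the \(\hat\Upsilon_j^{(k)}\) (which is exactly what the stratification delivers, and which the paper asserts directly), the residual of the reduced irreducible component \(Y\) is by definition the intersection of the remaining primaries, so \(\hat\Xi\) has precisely the \(\hat\Upsilon_j^{(k)}\) as components and the regular-case telescoping applies verbatim. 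Your Mayer--Vietoris then goes through, but it is extra work: the paper's connectedness claim, while terse, is backed by the same filtration \eqref{Xi and Upsilon} (or, more cleanly, by \(\varphi_*\calo_{\p{}(\calc_\sigma)}=\calo_{\p1}\), since every fibre of \(\varphi\) is a single projective space). So your proof is correct, just slightly longer than necessary at this step.
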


\begin{proof}
We gave in Lemma \ref{Jacobian union} a set-theoretic description of the Jacobian scheme \(\Xi_\sigma\) which shows that \(\Xi_\sigma\) consists of \(\ell\) pairwise disjoint linear spaces, together with the scroll \(Y\) in case \(\sigma\) is irregular, and in this case we also noticed that \(Y\) has dimension \(r_1\) and degree \(v\).
Also, in the proof of Theorem \ref{extheorem} we gave the structure of each primary component supported at any of the linear spaces mentioned above. Taking the union over all such spaces we get precisely the set \(\{\Upsilon_j^{(k)} \mid j \in \{1,\ldots,\ell\}, \,k  \in \{1,\ldots,s_j\}\}\), or \(\{\hat \Upsilon_j^{(k)} \mid j \in \{1,\ldots,\ell\}, \,k  \in \{1,\ldots,s_j\}\}\) depending on whether \(\sigma\) is regular or not. Note that in the proof of Theorem \ref{extheorem} we also had the component \(\hat \Upsilon^{(0)}\), but this is just the scroll \(Y\) which is already accounted for.

To compute \(h^0(\calo_{\Xi_\sigma})\), note that taking \(h^0\) of the structure sheaf is an additive operation on disjoint primary components, which is invariant under taking projective bundles and takes value \(a\) at \(\lambda_{j}^{(a)}\subset \p1\) for any \(a \in \N^*\). So for regular pencils, we get:
\[
h^0(\calo_{\Xi_\sigma}) = \sum_{j=1}^\ell \sum_{k=1}^{s_j} h^0(\calo_{\Upsilon_j^{k}}) = \sum_{j=1}^\ell \sum_{k=1}^{s_j} (a_{j,i}-a_{j,k+1}) = \sum_{j=1}^\ell a_{j,1}.
\]
For irregular pencils, the Jacobian scheme is connected as \(Y\) meets all the components \(\{\hat \Upsilon_j^{(k)} \mid j \in \{1,\ldots,\ell\}, \,k  \in \{1,\ldots,s_j\}\}\), hence we have \(h^0(\calo_{\Xi_\sigma})=1\). Finally, the primary components of \(\hat \Xi\) are precisely the \(\{\hat \Upsilon_j^{(k)} \mid j \in \{1,\ldots,\ell\}, \,k  \in \{1,\ldots,s_j\}\}\), so the last formula follows as in the regular case.
\end{proof}

\subsection{Applications to projective dimension} \label{sec:pdim}

Theorem \ref{extheorem} allows us to compute the projective dimension \(\pdim(\calt_\sigma)\) of the logarithmic tangent sheaf associated with a pencil of quadrics, namely, the minimal length of a locally free resolution of  \(\calt_\sigma\).

\begin{proposition} \label{pdim regular}
Let \(\sigma\) be an incompressible pencil of quadrics.
\begin{enumerate}[label=\roman*)]
    \item Assume \(\sigma\) is irregular. Then \(\pdim(\calt_\sigma)= n-r_1-2\).
    \item Assume \(\sigma\) is regular and put \(p = \min\{p_{j,1} \mid j \in \{1,\ldots,\ell\}\}\). Then:
    \[\pdim(\calt_\sigma)= n-p-1.
    \]
\end{enumerate}
\end{proposition}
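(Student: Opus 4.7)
The plan is to read the formula for $\pdim(\calt_\sigma)$ directly off Theorem \ref{extheorem}, using the standard identification
\[
\pdim(\calt_\sigma) = \max\{q \ge 0 \mid \inext^q_{\pn}(\calt_\sigma,\calo_\pn) \ne 0\}.
\]
This identity is the one available on any smooth variety and follows, at each stalk, from the Auslander--Buchsbaum formula over the regular local ring $\calo_{\pn,x}$: for a finitely generated module $M$ over a regular local ring $R$ one has $\pdim_R(M) = \max\{q \ge 0 \mid \Ext^q_R(M,R) \ne 0\}$. Since $\inext^0(\calt_\sigma,\calo_\pn) = \calt_\sigma^\vee \ne 0$ the maximum is attained and non-negative, so only the higher Ext sheaves, already completely described by Theorem \ref{extheorem}, control $\pdim(\calt_\sigma)$.

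For case (ii), when $\sigma$ is regular we have $r_1 = 0$ and only the first clause of Theorem \ref{extheorem} is available: the integers $q>0$ with $\inext^q_{\pn}(\calt_\sigma,\calo_\pn) \ne 0$ are exactly those of the form $q = n - 1 - (p_{j,1}+\cdots+p_{j,k})$ with $j \in \{1,\ldots,\ell\}$ and $k \in \{1,\ldots,s_j\}$. Since each $p_{j,i} \ge 1$, the sum is minimized by taking $k = 1$ and picking the index $j$ achieving $p = \min\{p_{j,1} \mid j \in \{1,\ldots,\ell\}\}$, which immediately yields $\pdim(\calt_\sigma) = n - 1 - p$.

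For case (i), when $\sigma$ is irregular and $r_1 > 0$, the second clause of Theorem \ref{extheorem} contributes the single value $q = n - r_1 - 2$, while the first clause only contributes values of the form $n - r_1 - 1 - (p_{j,1}+\cdots+p_{j,k}) \le n - r_1 - 2$ (again using $p_{j,i} \ge 1$). The maximum is therefore $n - r_1 - 2$, and this equals $\pdim(\calt_\sigma)$. There is no genuine obstacle once Theorem \ref{extheorem} is in hand; the only step that merits explicit care is the identification of $\pdim$ with the top nonvanishing Ext index, after which both parts reduce to the trivial combinatorial observation $p_{j,i} \ge 1$.
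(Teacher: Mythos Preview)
Your proof is correct and follows essentially the same approach as the paper: both invoke the identity $\pdim(\calt_\sigma)=\max\{q\ge 0\mid \inext^q_{\pn}(\calt_\sigma,\calo_\pn)\ne 0\}$ and then read off the maximum directly from Theorem~\ref{extheorem} using the elementary observation that the sums $p_{j,1}+\cdots+p_{j,k}$ are minimized at $k=1$, so the largest $q$ is $n-r_1-p-1$ from the first clause and $n-r_1-2$ from the second. Your inclusion of the Auslander--Buchsbaum justification for the $\pdim$ formula is a welcome addition that the paper leaves implicit.
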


\begin{proof}
By Theorem \ref{extheorem}, we can compute for which values of \(q \ge 1\) one has \(\inext_{\pn}^q(\calt_\sigma,\calo_\pn) \ne 0\).
On the other hand, we have:
\[
\mathrm{pdim}(\calt_\sigma)= \max\{q \in \N \mid \inext_{\pn}^q(\calt_\sigma,\calo_\pn) \ne 0\}.
\]

Fixing \(j \in \{1,\ldots,\ell\}\) and letting \(k\) vary in \(\{1,\ldots,s_j\}\) the maximal value for  \(n-r_1-p_{j,1}-\cdots-p_{j,k}-1\) is attained by choosing \(k=1\).
Such value is thus \(n-r_1-p_{j,1}-1\). Letting \(j\) vary in \(\{1,\ldots,\ell\}\), the maximal value of \(n-r_1-p_{j,1}-1\) is \(n-r_1-p-1\).
The maximum between \(n-r_1-p-1\) and \(n-r_1-2\) is \(n-r_1-2\) because \(p \ge 1\).
This gives the result.
\end{proof}

\begin{example}
A pencil \(\sigma\) as in Example \ref{632} has \(\pdim(\calt_\sigma)=31\). For Example \ref{a big one}, we get \(\pdim(\calt_\sigma)=49\).
\end{example}

\begin{example}
The completely irregular pencil \(\sigma = (x_1x_5+x_3x_4,x_2x_4+x_0x_5)\) showing up at the end of the proof of Theorem \ref{stability-pencils} has \(n=5, u=v=r_1=2\).
So for \(q>0\) we have \(\inext_{\pn}^q(\calt_\sigma,\calo_\pn) \ne 0\) if and only if \(q=1\). We get \(\pdim(\calt_\sigma)=1\).
\end{example}

\begin{corollary} \label{regular locally free}
A regular pencil of quadrics is locally free if and only if \(n \in \{1,2\}\) or its Segre symbol is \([(1,1),(1,1)]\) or \([(2,2)]\).
\end{corollary}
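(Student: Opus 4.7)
The approach is to reduce local freeness to a combinatorial condition on the Segre symbol via Proposition \ref{pdim regular}, and then enumerate. A regular pencil has $r_1=0$, so Proposition \ref{pdim regular} gives $\pdim(\calt_\sigma) = n-p-1$ with $p=\min\{p_{j,1}\mid j\in\{1,\ldots,\ell\}\}$. Since $\calt_\sigma$ is a nonzero coherent sheaf on $\pn$, local freeness is equivalent to $\pdim(\calt_\sigma)\le 0$, that is, to the condition $p_{j,1}\ge n-1$ for every $j\in\{1,\ldots,\ell\}$.

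I would then combine this condition with the identity \eqref{just sum}, which for a regular pencil reads $\sum_{j,i} a_{j,i}p_{j,i} = u-v = n+1$. Using $a_{j,i}\ge 1$, this gives $\sum_j p_{j,1}\le n+1$, whence $\ell(n-1)\le n+1$. For $n\in\{1,2\}$ the condition $p_{j,1}\ge n-1$ is vacuous, so every regular pencil is locally free. For $n\ge 3$ the bound restricts $\ell$ drastically: one gets $\ell\in\{1,2\}$ when $n=3$ and $\ell=1$ when $n\ge 4$. A short enumeration of partitions then picks out the only numerical candidates: $\Sigma=[(2,2)]$ or $\Sigma=[1^4]$ for $n=3,\ell=1$; $\Sigma=[(1,1),(1,1)]$ for $n=3,\ell=2$ (by tightness of $\ell(n-1)\le n+1$); and $\Sigma=[1^{n+1}]$ for $n\ge 4,\ell=1$.

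The main obstacle, and the only step that genuinely uses the hypothesis that $(f_1,f_2)$ is a regular sequence rather than just a symmetric matrix pencil of the prescribed cokernel type, is to rule out $\Sigma=[1^{n+1}]$. I would argue that this Segre symbol forces $\calc_\sigma\simeq \calo_\lambda^{\oplus(n+1)}$ at a single point $\lambda=(z_1:z_2)\in\p 1$, so $\rho_\sigma(\lambda)=0$ as a symmetric matrix. In characteristic $\ne 2$ this forces $z_1 f_1+z_2 f_2=0$, contradicting the linear independence of $f_1$ and $f_2$ built into the regular sequence hypothesis. Excluding this Segre symbol leaves exactly $\Sigma\in\{[(2,2)],[(1,1),(1,1)]\}$ for $n=3$ and no possibility for $n\ge 4$, establishing the corollary.
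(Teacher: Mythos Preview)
Your proof is correct and follows essentially the same route as the paper's. Both arguments reduce local freeness to the condition $p_{j,1}\ge n-1$ for every $j$ (the paper via Theorem~\ref{extheorem} directly, you via its corollary Proposition~\ref{pdim regular}), then combine this with the identity \eqref{just sum} to enumerate the few admissible Segre symbols. Your exclusion of $[1^{n+1}]$ is the same fact the paper records as ``$p_{j,i}\le n$'' and invokes with the phrase ``which is impossible''; you simply make the reason explicit, namely that a fibre of $\calc_\sigma$ of rank $n+1$ forces $\rho_\sigma(\lambda)=0$ and hence a vanishing quadric in the pencil.
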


\begin{proof}
A regular pencil \(\sigma\) is locally free if and only if all the integers \(q\) satisfying \eqref{exts} are non-positive. This is always the case for \(n \le 2\) and holds true for the Segre symbols \([(1,1),(1,1)]\) or \([(2,2)]\) so one implication is proved.

Conversely, assume \(n\ge 3\) and \(\sigma\) locally free. Set \(a=\min\{a_{j,1} \mid j \in \{1,\ldots,\ell\}\}\).
For all \(j \in \{1,\ldots,\ell\}\), take \(k=1\) and define \(q\) by \eqref{exts}. The inequality \(q \le 0\) gives \(p_{j,1} \ge n-1\),
which implies, in view of \eqref{just sum}, that:
\[
n+1 \ge \sum_{j=1}^\ell \left(a_{j,1}(n-1) + \sum_{i=2}^{s_j} a_{j,i}p_{j,i}\right).
\]
This gives \(n(a\ell-1) \le a\ell + 1\) and therefore either \(a=\ell=1\), or \(a\ell=2\) and \(n=3\). In the former case \(s_1=1\) so \eqref{just sum} gives \(p_{1,1}=n+1\), which is impossible. In the latter, either \((a,\ell)=(1,2)\) and the Segre symbol is \([1^2,1^2]=[(1,1),(1,1)]\) or \((a,\ell)=(2,1)\) and the Segre symbol is \([2^2]=[(2,2)]\).
\end{proof}

In the same spirit, we have, more generally, the following.

\begin{corollary}
Let \(\sigma\) be an incompressible pencil of quadrics. Then:
\begin{enumerate}[label=\roman*)]
    \item If \(\sigma\) is regular, then \(\pdim(\calt_\sigma) \ge \frac{n-3}{2}\).
    \item If \(\sigma\) is irregular, then \(\pdim(\calt_\sigma) \ge \frac{2n-7}{3}\).
\end{enumerate}
\end{corollary}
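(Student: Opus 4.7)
The plan is to reduce the corollary to Proposition \ref{pdim regular}, which provides exact formulas for $\pdim(\calt_\sigma)$ in both cases; the work then becomes purely combinatorial, concerning the Segre symbol and the generic corank.

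For item (ii), the argument is almost immediate. By Proposition \ref{pdim regular}(i), $\pdim(\calt_\sigma)=n-r_1-2$, so the desired inequality $\pdim(\calt_\sigma)\ge (2n-7)/3$ is equivalent to $r_1 \le (n+1)/3$. This is precisely the content of Lemma \ref{3r1}, applied to the incompressible pencil $\sigma$.

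For item (i), Proposition \ref{pdim regular}(ii) gives $\pdim(\calt_\sigma)=n-p-1$ with $p=\min\{p_{j,1}\mid j\in\{1,\ldots,\ell\}\}$, so the bound $\pdim(\calt_\sigma)\ge (n-3)/2$ amounts to $p\le (n+1)/2$. Since $\sigma$ is regular we have splitting type $(n+1,0)$, so \eqref{just sum} specializes to
\[
\sum_{j=1}^\ell\sum_{i=1}^{s_j} a_{j,i}\,p_{j,i}\;=\;n+1.
\]
The idea is a pigeonhole on $\ell$. If $\ell\ge 2$, then using $p\le p_{j,1}$ and $a_{j,1}\ge 1$ for each $j$ we get $\ell p \le \sum_j a_{j,1}p_{j,1} \le n+1$, yielding $p\le (n+1)/\ell\le (n+1)/2$. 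If $\ell=1$, we have $a_{1,1}p_{1,1}\le n+1$ and it suffices to show $a_{1,1}\ge 2$, for then $p=p_{1,1}\le (n+1)/2$.

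The one step to isolate is ruling out $a_{1,1}=1$ in the $\ell=1$ case. This is the main (though minor) obstacle: if $a_{1,1}=1$ then the strict inequalities $a_{1,1}>a_{1,2}>\cdots$ force $s_1=1$, which in turn gives $p_{1,1}=n+1$; but this contradicts the general bound $p_{j,i}\le n$ already recorded in the paper just after Proposition \ref{prop:segre-inv} (geometrically, the corank of any quadric in a regular pencil is at most $n$). With $a_{1,1}\ge 2$ in hand, the conclusion $p\le (n+1)/2$ follows, completing the proof.
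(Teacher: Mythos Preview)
Your proof is correct and follows essentially the same route as the paper: both parts reduce to Proposition \ref{pdim regular}, the irregular case via Lemma \ref{3r1}, and the regular case via the Segre sum \eqref{just sum} combined with the fact that $(\ell,a_{1,1})=(1,1)$ is impossible. The paper packages the regular case slightly differently, bounding $n+1\ge \ell a p$ with $a=\min_j a_{j,1}$ and invoking $a\ell\ge2$ from the proof of Corollary \ref{regular locally free}, but your case split on $\ell$ and direct use of $p_{j,i}\le n$ amount to the same argument.
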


\begin{proof}
Assume \(\sigma\) is irregular. Then by Lemma \ref{3r1} we get:
\[
\pdim(\calt_\sigma) = n-r_1-2 \ge n-2-\frac {n+1}3=\frac{2n-7}{3}.
\]

Next, suppose \(\sigma\) is regular.
Again put \(a=\min\{a_{j,1} \mid j \in \{1,\ldots,\ell\}\}\), \(p = \min\{p_{j,1} \mid j \in \{1,\ldots,\ell\}\}\) and use \eqref{just sum} to get \(n+1 \ge \ell ap\).
By Corollary \eqref{pdim regular}, we obtain:
\[
n+1 \ge \ell a (n-\pdim(\calt_\sigma)-1).
\]
Rearranging the terms, this yields:
\[
\pdim(\calt_\sigma) \ge n - \frac{n+1}{\ell a}-1.
\]
We saw in the previous proof that \(a\ell \ge 2\) so this gives \(\pdim(\calt_\sigma) \ge \frac{n-3}{2}\).
\end{proof}

\begin{remark}
The previous bounds are sharp. Indeed, if \(\sigma\) is a completely irregular incompressible pencil, then \(3r_1=n+1\) so \(\pdim(\calt_\sigma) = \frac{n-3}{2}\). 

Also, if \(\sigma\) is a regular pencil with \(n \ge 3\) odd, say \(n+1=2m\), then we may take \(\sigma\) to have Segre symbol \([(1^m),(1^m)]\) or \([(2^m)]\) and we get 
\(\pdim(\calt_\sigma) = \frac{n-3}{2}\).
\end{remark}

\begin{example} \label{irregular P3}
Let us list the possible cases for irregular pencils of quadrics in \(\p3\). We give the possible Segre symbols of the regular part. 
\[
\begin{small}
\begin{array}{c|c|c|c|c|c}
r_1 & \mbox{\((u,v)\) } & h^0(\calc_\t) & \calc_\tf & \mbox{Compressible} & \mbox{Segre} \\
\hline
\hline
1 & (3,0) & 3 & \calo_\p1 & \mbox{yes} & [1,1,1]  \\
1 & (3,0) & 3 & \calo_\p1 & \mbox{yes} & [2,1]  \\
1 & (3,0) & 3 & \calo_\p1 & \mbox{yes} & [3]  \\
1 & (3,0) & 3 & \calo_\p1 & \mbox{yes} & [1^2,1]  \\
1 & (3,0) & 3 & \calo_\p1 & \mbox{yes} & [(2,1)]  \\
\hline
1 & (2,1) & 1 & \calo_\p1(1) &\mbox{no} & [1]\\
\hline
\hline
2 & (2,0) & 2 &  \calo_\p1^{\oplus 2} & \mbox{yes} &[1,1]\\
2 & (2,0) & 2 &  \calo_\p1^{\oplus 2} & \mbox{yes} &[2]\\
\hline
2 & (1,1) & 0 & \calo_\p1 \oplus \calo_\p1(1) & \mbox{yes} & [\emptyset]\\
\end{array}
\end{small}
\]

The pencil with the empty Segre symbol is completely irregular.
We see that only one case gives an incompressible pencil. This one has \(\pdim(\calt_\sigma)=1\).
\end{example}

\subsection{Graded projective dimension}

Let \(n \ge 2\). We call \textit{graded projective dimension} of a torsion-free sheaf \(\cale\) on \(\pn\) :
\[
\gpdim(\cale) = \max\{q \in \{0,\ldots,n-1\} \mid \Ext_{\pn}^q(\cale,\calo_{\pn})_* \ne 0\}.
\]
Here, \(\Ext_{\pn}^q(\cale,\calo_{\pn})_*\) is a shortcut for \(\oplus_{t \in \Z}\Ext_{\pn}^q(\cale,\calo_{\pn}(t))\).
The graded projective dimension is the length of a minimal graded free resolution of the module of global sections of \(\cale\).

\begin{theorem}\label{thm gpdim}
Let \(\sigma\) be a pencil of quadrics in \(\pn\).
\begin{enumerate}[label=\roman*)]
    \item \label{gpdim regular} Assume \(\sigma\) is regular. Then:
    \[\gpdim(\calt_\sigma)=n-2,\] unless the Segre symbol \(\Sigma\) of \(\sigma\) \([1^p,1^q]\) for some \(p \ge q \ge 1\), or \([(2^q,1^p)]\) for some \(p \ge 0\) and \(q \ge 1\), in which case:
    \[\gpdim(\calt_\sigma)=n-q-1.\]
    \item \label{gpdim irregular} Assume \(\sigma\) is irregular of generic corank \(r_1\). Then:
    \[\gpdim(\calt_\sigma)=n-1,\]
    unless \(\sigma\) has degree vector \((1,\ldots,1)\), in which case:
    \[\gpdim(\calt_\sigma)=n-r_1-2.\]
\end{enumerate}
\end{theorem}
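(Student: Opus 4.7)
The plan is to translate the graded projective dimension into a cohomology vanishing question for $\calt_\sigma$ and then compute the relevant cohomologies through the stratification of $\Xi_\sigma$. Let $M = \Gamma_*(\calt_\sigma)$ and let $\mathfrak{m} = (x_0,\ldots,x_n) \subset R$. Since $\calt_\sigma$ is reflexive and $M$ is saturated, one has $H^0_{\mathfrak{m}}(M) = H^1_{\mathfrak{m}}(M) = 0$ and $H^i_{\mathfrak{m}}(M) \simeq \bigoplus_t H^{i-1}(\pn, \calt_\sigma(t))$ for $i \ge 2$. Auslander--Buchsbaum then gives
\[
\gpdim(\calt_\sigma) = n - i^*, \qquad i^* := \min\bigl\{i \in \{1,\ldots,n-1\} \mid H^i(\calt_\sigma(t)) \ne 0 \text{ for some } t\bigr\},
\]
with the convention $\gpdim = 0$ when the set is empty. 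Feeding the sequences \eqref{tn-sec} and \eqref{nq-sec} into the long exact cohomology sequence and using the vanishing of the middle cohomology of line bundles on $\pn$ yields
\begin{align*}
H^i(\calt_\sigma(t)) &\simeq H^{i-2}(\calq_\sigma(t)), && 3 \le i \le n-1,\\
H^2(\calt_\sigma(t)) &\simeq \coker\bigl(R_{t+1}^{\oplus 2} \to H^0(\calq_\sigma(t))\bigr),\\
H^1(\calt_\sigma(t)) &\simeq \coker\bigl(R_t^{\oplus(n+1)} \to H^0(\calm_\sigma(t))\bigr).
\end{align*}

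\textbf{Regular case.} Using the filtration of $\calq_\sigma$ constructed in the proof of Theorem \ref{extheorem}, whose graded pieces are line bundles on the primary components $\Upsilon_j^{(k)}$ of Corollary \ref{components of Xi}, one analyses these cohomologies. For a generic regular Segre symbol, some $\Upsilon_j^{(k)}$ carries non-trivial nilpotent structure, causing the map $R_{t+1}^{\oplus 2} \to H^0(\calq_\sigma(t))$ to fail surjectivity for some $t$, so $H^2(\calt_\sigma(t)) \ne 0$. On the other hand, regularity of $\sigma$ forces surjectivity of $R_t^{\oplus (n+1)} \to H^0(\calm_\sigma(t))$ at every twist, whence $H^1 = 0$, $i^* = 2$ and $\gpdim = n-2$. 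In the exceptional symbols $[1^p, 1^q]$ (disjoint reduced linear subspaces) and $[(2^q, 1^p)]$ (nested reduced linear subspaces over a single point of $\p 1$), every primary component is reduced and $R_{t+1}^{\oplus 2} \to H^0(\calq_\sigma(t))$ becomes a direct sum of standard restriction maps to linear subspaces, hence surjective at every twist; so $H^2 = 0$ and the first non-vanishing cohomology comes from $H^{q+1}(\calt_\sigma(t)) \simeq H^{q-1}(\calq_\sigma(t))$, which is nonzero for $t \ll 0$ by the top cohomology on the smallest reduced component $\p{q-1}$. This gives $i^* = q+1$ and $\gpdim = n-q-1$.

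\textbf{Irregular case.} Here the rational normal scroll $Y \subset \Xi_\sigma$ of dimension $r_1$ drives the analysis. The block decomposition $\rho_\sigma = \bigoplus_i (\tau_{c_i} \oplus \tau_{c_i}^{\t}) \oplus \bar{\rho}_\sigma$ of Proposition \ref{recover} shows that whenever some $c_i \ge 2$ the corresponding $\tau_{c_i}$-block produces a bihomogeneous relation on $\p 1 \times \pn$ descending to a non-trivial class in $\coker\bigl(R_t^{\oplus (n+1)} \to H^0(\calm_\sigma(t))\bigr)$, giving $H^1(\calt_\sigma(t)) \ne 0$ and $\gpdim = n-1$. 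When the degree vector is $(1,\ldots,1)$ each block reduces to $\tau_1 = (z_1,z_2)$, the scroll $Y$ is the minimal one of degree $r_1$, and direct computation shows $H^1(\calt_\sigma(t)) = 0$ for all $t$; continuing up the cohomology ladder via the same stratification (where $Y$ now dominates, $\calc_\t$ contributing only low-dimensional primary components), one verifies $H^j(\calt_\sigma(t)) = 0$ for $1 \le j \le n-r_1-1$, and that $H^{n-r_1}(\calt_\sigma(t))$ is nonzero for $t \ll 0$ through the top cohomology of twists of $\calo_Y$. This yields $i^* = n-r_1$ and $\gpdim = n-r_1-2$.

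\textbf{Main obstacle.} The essential difficulty is the precise control of the low cohomologies $H^1(\calt_\sigma(t))$ and $H^2(\calt_\sigma(t))$, which are invisible to the local Ext computation of Theorem \ref{extheorem} and truly depend on surjectivity of the two comparison maps $R_{t+1}^{\oplus 2} \to H^0(\calq_\sigma(t))$ and $R_t^{\oplus (n+1)} \to H^0(\calm_\sigma(t))$ at every twist $t$. Singling out the two exceptional regular Segre symbols $[1^p, 1^q]$ and $[(2^q, 1^p)]$, and the irregular exception of degree vector $(1,\ldots,1)$, requires combining the primary decomposition of Corollary \ref{components of Xi} with the block decomposition of Proposition \ref{recover} to reduce each case to an explicit cohomology computation on a $\p1$-projective-bundle stratum.
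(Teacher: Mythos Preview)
Your overall framework matches the paper's: translate $\gpdim$ into the smallest $i^*\ge 1$ with $H^{i^*}_*(\calt_\sigma)\ne 0$, then compute via the stratification of $\calq_\sigma$. But three steps are genuinely incomplete or incorrect.

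\textbf{Regular case, upper bound.} You assert that ``regularity of $\sigma$ forces surjectivity of $R_t^{\oplus(n+1)}\to H^0(\calm_\sigma(t))$ at every twist.'' This is equivalent to $H^1_*(\calt_\sigma)=0$, i.e.\ $\depth(T_\sigma)\ge 3$, and is the heart of the bound $\gpdim(\calt_\sigma)\le n-2$. The paper proves it by a non-obvious argument: one shows $\Hom_R(\kappa,Q_\sigma)=0$ by choosing both generators of the pencil to be smooth quadrics (possible precisely because $\sigma$ is regular), diagonalising one of them, and observing that any $(h,k)\in R(1)^2$ annihilated modulo the image of $\calj_\sigma$ by the maximal ideal would force $\calj_\sigma$ to have generic rank $1$, contradicting the Euler relation. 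You give no argument here, and the stratification of $\Xi_\sigma$ by itself does not supply one.

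\textbf{Regular case, lower bound.} Your reasoning that $H^2_*\ne 0$ ``because some $\Upsilon_j^{(k)}$ carries non-trivial nilpotent structure'' is not the right criterion: for Segre symbol $[1,1,1,1]$ all components are reduced points, yet $H^2(\calt_\sigma(-1))\ne 0$. The correct mechanism, used in the paper, is the count $h^0(\calq_\sigma(-1))=\sum_j a_{j,1}$ from Corollary~\ref{components of Xi}; this exceeds $2$ except in the two exceptional symbols. Conversely, your claim that in the exceptional case $[(2^q)]$ every primary component is reduced is false: there is a single component, a double structure on $\p{q-1}$. The paper handles the exceptional symbols by direct computation (explicit splitting $\calt_\sigma\simeq\calr_L(1)\oplus\calr_M(1)$ for $[1^p,1^q]$, and the two-step filtration for $[(2^q,1^p)]$), not by your surjectivity argument.

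\textbf{Irregular case, degree vector $(1,\ldots,1)$.} There is an arithmetic inconsistency. You claim $H^j_*(\calt_\sigma)=0$ for $1\le j\le n-r_1-1$ and $H^{n-r_1}_*\ne 0$, hence $i^*=n-r_1$; but then $\gpdim=n-i^*=r_1$, not $n-r_1-2$ as you write. The correct vanishing range is $1\le j\le r_1+1$ with first non-vanishing at $j=r_1+2$. The paper obtains this not through top cohomology of $\calo_Y$ but by showing that, when all $c_i=1$, the sheaf $\calo_Y(F)$ admits a minimal graded Buchsbaum--Rim resolution of length $\codim(Y)=n-r_1$, whence $\gpdim(\calq_\sigma)=n-r_1$ and therefore $\gpdim(\calt_\sigma)=n-r_1-2$.
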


We underline that the graded projective dimension of \(\calt_\sigma\) depends on the Segre symbol only if \(\sigma\) is regular; otherwise, $\gpdim(\calt_\sigma)$ only depends on whether or not the degree vector \(\mathbf{c}=(c_1,\ldots,c_{r_1})\) contains a value strictly greater than 1.

\begin{proof}
First of all, we observe that, without loss of generality, we can assume that the pencil \(\sigma\) is incompressible. Indeed, if \(\sigma\) has compressibility \(m>0\), then we may work in a projective space of dimension \(\hat n = n-m\) whose coordinates do occur in the quadrics of \(\sigma\). The minimal resolution obtained over the coordinate ring of this space is a minimal resolution of \(\calt_\sigma/\calo_{\pn}^{\oplus m}\) and thus computes \(\gpdim(\calt_\sigma)\).

\medskip
Next we note that according to the proof of Theorem \ref{stability-pencils}, the sheaf \(\calq_\sigma(-1)\) of an incompressible pencil \(\sigma\) is isomorphic to \(\calo_{\Xi_\sigma}(F)\), where \(F\) is the class of a fiber of the scroll map \(Y \to \p 1\). The divisor \(F\) is trivial on the components \(\{\hat \Upsilon_j^{(k)} \mid j \in \{1,\ldots,\ell\}, \,k  \in \{1,\ldots,s_j\}\}\).
Set \(a=\sum_{j=1}^\ell a_{j,1}\). By Corollary \ref{components of Xi}, we get:
\begin{equation} \label{h0q}
h^0(\calq(-1))=a, \qquad \mbox{if \(\sigma\) is regular}.
\end{equation}
Also, if \(\sigma\) is irregular, denoting again by \(\hat \Xi\) the residual scheme of \(Y\) in \(\Xi_\sigma\), we get an exact sequence:
\begin{equation} \label{filtration Q}
0 \to \calo_Y(F) \to \calq(-1) \to \calo_{\hat \Xi} \to 0.    
\end{equation}

In order to prove the result, we will need two more ingredients, namely two equivalent definitions of the graded projective dimension.
For the first one, for any coherent sheaf \(\cale\) on \(\pn\) and \(q \in \N\), put \(H^q_*(\cale) = \bigoplus_{t \in \Z} H^q(\cale(t))\). Set:
\[
q_0 = \min\{q \in \N^* \mid H^q_*(\calt_\sigma) \ne 0\}.
\]
We have, by Serre duality:
\[
\gpdim(\calt_\sigma)=n-q_0.
\]

The second one is worked out in the framework of graded modules over the polynomial ring \(R=\kappa[x_0,\ldots,x_n]\).
Consider the matrix \(\nabla\sigma\) as a map of graded modules \(R^{n+1} \to R(1)^{2}\) and define the \(R\)-modules \(Q_\sigma\), \(M_\sigma\) and \(T_\sigma\) as the cokernel, image, and kernel of this map, so that sheafifying the graded modules \(Q_\sigma\), \(M_\sigma\) and \(T_\sigma\) we get back \(\calq_\sigma\), \(\calm_\sigma\) and \(\calt_\sigma\).
We write down the exact sequence of graded \(R\)-modules:
\[
0 \to T_\sigma \to R^{n+1} \xrightarrow{\nabla\sigma} R(1)^{2} \to Q_\sigma \to 0.
\]
The Auslander--Buchsbaum formula gives: 
\[
\gpdim(\calt_\sigma)=n+1-\depth(T_\sigma).
\]
We compute the depth of \(T_\sigma\) by the relation:
\[
\depth(T_\sigma) = \min\{q \in \N \mid \Ext_R^q(\kappa,T_\sigma) \ne 0\},
\]
where \(\kappa\) is the residual field, namely \(\kappa= R/(x_0,\ldots,x_1)\).
\bigskip

Having set up all this, we are in a position to prove \ref{gpdim regular}. So assume \(\sigma\) is regular.
First we compute \(\gpdim(\calt_\sigma)\) when \(\Sigma=[1^p,1^q]\) or \(\Sigma=[(2^q,1^p)]\). 
If \(\Sigma=[1^p,1^q]\) with \(p \ge q \ge 1\) then \(p+q=n+1\) and the generators \((f,g)\) of \(\sigma\) can be chosen to be \(f=x_0^2+\cdots+x_{p-1}^2\) and \(g=x_p^2+\cdots+x_n^2\). Set \(L=\VV(x_0,\ldots,x_{p-1})\) and \(M=\VV(x_p,\ldots,x_n)\). Looking at \(\nabla\sigma\), we see that:
\[
\calt_\sigma \simeq \calr_L(1) \oplus \calr_M(1),
\]
where \(\calr_L\) and \(\calr_M\) are the Koszul syzygies of \(L\) and \(M\), see \S \ref{destab koszul}.
Now \(\gpdim(\calr_L(1))=p-2 \ge q-2 = \gpdim(\calr_M(1))\), so:
\[\gpdim(\calt_\sigma)=p-2=n-q-1.\]
Next, we deal with \(\Sigma=[(2^q,1^p)]\), for \(q \ge 1\) and \(p \ge 0\). Note that \(p+2q=n+1\), so 
\(
n-q-1 \ge q
\).
When \(p=0\) we have \(\calq_\sigma(-1) \simeq \calo_{\Upsilon^{(1)}}\), where \(\Upsilon^{(1)}\) is a projective space \(\p {n-q}\) over a length-2 subscheme of \(\p 1\), while for \(p>0\) we have a filtration:
\[
0 \to \calo_{\Upsilon^{(1)}} \to \calq_\sigma(-1) \to \calo_{\Upsilon^{(2)}} \to 0,
\]
where \(\Upsilon^{(1)}\) is a reduced \(\p {n-q}\) and \(\Upsilon^{(2)}\) is a reduced \(\p {q-1} \subset \p {n-q}\).
In both cases, since the coordinate rings of the subschemes \(\Upsilon^{(k)}\) are graded Cohen--Macaulay rings, we have \(\gpdim(\calo_{\Upsilon^{(k)}})=\codim(\Upsilon^{(k)})\). Therefore \(\gpdim(\calt_\sigma)+2\) is the
maximum of the \(\gpdim(\calo_{\Upsilon^{(k)}})\) for different values of \(k\).
Since \(
n-q-1 \ge q
\), in both cases we obtain the equality:
\[\gpdim(\calt_\sigma)=n-q-1.\]

Let us show that, unless \(\Sigma=[1^p,1^q]\) or \(\Sigma=[(2^q,1^p)]\), we have 
\(H^2(\calt_\sigma(-1)) \ne 0\), which implies \(\gpdim(\calt_\sigma) \ge n-2\).
It suffices to show \(H^1(\calm_\sigma(-1)) \ne 0\), which in turn holds true if 
\(h^0(\calq_\sigma(-1)) > 2\).
But by \eqref{h0q}, we have \(h^0(\calq_\sigma(-1)) > 2\) unless \(\ell=2\) and \(s_1=s_2=a_{1,1}=a_{2,1}=1\), or  \(\ell=1\), \(s_1 \in \{1,2\}\), \(a_{1,1}=2\). Since these two cases correspond to the Segre symbols \(\Sigma=[1^p,1^q]\) for some \(p,q \ge 1\) or \(\Sigma=[(2^q,1^p)]\) for some \(q \ge 1, p\ge 0\), we get \(\gpdim(\calt_\sigma) \ge n-2\) except in these cases.
\medskip

Now we prove that, for regular pencils, we have \(\gpdim(\calt_\sigma) \le n-2\). 
It suffices to show that the module \(Q_\sigma\) contains no copy of the residual field \(\kappa\).
Indeed, otherwise there would be a non-zero element of \(Q_\sigma\), represented by \((h,k) \in R_1^2\), whose annihilator contains the maximal ideal \((x_0,\ldots,x_n)\). 
Up to switching the factors, we may assume \(h \ne 0\). 
Also, since the pencil \(\sigma\) is regular,
we may choose the generators \((f,g)\) of the pencil to be both associated with smooth quadrics. Also, we may select coordinates \(x_0,\ldots,x_n\) of \(\pn\) so that \(2f=x_0^2+\cdots+x_n^2\).

Then, for the pair \((h,k) \in R_1^2\) with \(h \ne 0\), there must be a matrix \((a_{i,j})_{0 \le i,j \le n}\), with \(a_{i,j} \in R\) for all \(0 \le i,j \le n\) such that:
\begin{equation}
\label{matrix relation}    
\begin{pmatrix}
      h \\
      k
\end{pmatrix} 
\begin{pmatrix}
      x_0 & \cdots & x_n
      \end{pmatrix} =
      \begin{pmatrix}
      x_0 & \cdots & x_n \\
      g_0 & \cdots & g_n 
      \end{pmatrix} 
      \begin{pmatrix}
      a_{0,0} & \cdots & a_{0,n} \\
      \vdots & & \vdots \\
      a_{n,0} & \cdots & a_{n,n} 
      \end{pmatrix},
      \end{equation}
where \(\sigma=(f,g)\) and we wrote \(g_i=\partial g/\partial x_i\), for all \(i \in \{0,\ldots,n\}\). We used here  \(2f=x_0^2+\cdots+x_n^2\).
Note that, by the symmetric role of \(f\) and \(g\), we may assume \(h \ne 0\).
Hence, \eqref{matrix relation} implies that the matrix \(A=(a_{i,j})_{0 \le i,j \le n}\) is \(h \III_{n+1}\) and is thus invertible in \(\kappa(x_0,\ldots,x_n)\). Hence, we may rewrite \eqref{matrix relation} in \(\kappa(x_0,\ldots,x_n)\) as:
\[
\begin{pmatrix}
      h \\
      k
\end{pmatrix} 
\begin{pmatrix}
      x_0 & \cdots & x_n
      \end{pmatrix} A^{-1} =       \begin{pmatrix}
      x_0 & \cdots & x_n \\
      g_0 & \cdots & g_n 
      \end{pmatrix}.
\]
Therefore \(\nabla\sigma\) should have generic rank 1, which is impossible by the Euler relation since \(f,g\) are not proportional.

Summing up, we have proved \(\Hom_R(\kappa,Q_\sigma)=0\).
Therefore, applying \(\Ext_R^*(\kappa,-)\) to the above sequence we get \(\Ext_R^q(\kappa,T_\sigma)=0\) for \(q \le 2\).
Hence \(\depth(T_\sigma) \ge 3\) and finally 
\(\gpdim(\calt_\sigma) \le n-2\). This concludes the proof for regular pencils.

\bigskip
It remains to carry out the proof if \(\sigma\) is irregular. 
In view of the filtration \eqref{filtration Q} and since the coordinate rings of the primary components of \(\hat \Xi\) are graded Cohen--Macaulay rings, we get: \[
\gpdim(\calo_{\hat \Xi})=\codim(\hat \Xi) \le \codim(Y)=n-r_1.
\]

Now, if \(c_i=1\) for all \(i \in \{1,\ldots,r_1\}\), the sheaf \(\calo_Y(F)\) has a minimal Buchsbaum--Rim resolution of length equal to \(\codim(Y)\). This is obtained by the Buchsbaum--Rim resolution of \(\calo_Y(F)\) in the linear span \(L = \p {v-1} \subset \pn\) of \(Y\) seen as the cokernel of a matrix of linear forms of size \(2 \times v\) over \(L\), combined with the Koszul complex of \(L\) in \(\pn\); we refer to \cite[Theorem A2.10, Exercise A2.19]{E} for Buchsbaum--Rim complexes and matrices associated with scrolls.

We deduce \(\gpdim(\calo_Y(F))=\codim(Y)\), which in turn yields:
\[
\gpdim(\calt_\sigma)=\max(\gpdim(\calo_{\hat \Xi}),\gpdim(\calo_Y(F))-2=n-r_1-2.
\]
This proves the last part of \ref{gpdim irregular}. 

\medskip
To conclude the proof, let us assume that there is \(i \in \{1,\ldots,r_1\}\) such that \(c_i \ge 2\) and show that \(\gpdim(\calt_\sigma)=n-1\).
It is enough to show that \(H^1(\calt_\sigma) \ne 0\). 
Note that \(H^0(\calt_\sigma)=0\) by incompressibility of \(\sigma\), hence:
\[
h^1(\calt_\sigma) \ge 2(n+1)-h^0(\calq_\sigma)-(n+1) \ge n+1-h^0(\calq_\sigma),
\]
so it suffices to check \(h^0(\calq_\sigma)<n+1\).

To show this inequality we recall the notation \((u,v)\) for the splitting type of \(\sigma\) and take cohomology of \eqref{filtration Q} twisted by \(\calo_{\pn}(1)\).
Since the linear span of the residual subscheme \(\hat \Xi\) of \(Y\) in \(\Xi_\sigma\) is \(\p{}(H^0(\calc_\t))\) and since \(h^k(\calo_Y(F+H))=0\) for \(k >0\) and \(h^0(\calo_Y(tF+H))=v+tr_1\), for all \(t \ge 0\), we obtain:
\begin{align*}
h^0(\calq_\sigma)&=h^0(\calo_Y(F+H))+h^0(\calo_{\hat \Xi}(1)) = \\ 
 & =\sum_{i=1}^{r_1} (c_i+2) + h^0(\calc_\t)=(v+2r_1)+(u-v).
\end{align*}
We are reduced to show \(u+2r_1 < n+1\) and, since \(n+1=u+v+r_1\), this amounts to \(v > r_1\).
But the inequality \(v=\sum_{i=1}^{r_1} c_i > r_1\) takes place precisely if there is \(i \in \{1,\ldots,r_1\}\) such that \(c_i \ge 2\), so the non-vanishing \(H^1(\calt_\sigma) \ne 0\) is established.
The proof of the theorem is achieved.
\end{proof}

\begin{example}
Let \(\sigma\) be a regular pencil of quadrics with \(r_0=1\). Then the module of global sections \(T_\sigma\) of \(\calt_\sigma\) has the Buchsbaum--Rim type resolution:
\[
0 \leftarrow T_\sigma \leftarrow R(-2)^{\binom{n+1}3}\leftarrow \cdot  \cdot \cdot \leftarrow R(-1-j)^{j\binom{n+1}{j+2}} \leftarrow \cdot  \cdot \cdot
\leftarrow R(-n)^{n-1} \leftarrow 0,
\]
with \(j \in \{1,\ldots,n-1\}\).
This resolution is minimal and linear of length \(n-2\).

For a regular pencil of quadrics, the Buchsbaum--Rim complex in the above display is exact if and only if \(\dim(\calq_\sigma)=0\). This happens if and only if \(r_0=1\).
\end{example}

\section{Pencils of quadrics in dimension 3} \label{another}

We can arrive at a full classification for pencils of quadrics in \(\p 3\) over an algebraically closed field \(\kappa\) of characteristic different from 2. The result is the following.

\begin{theorem}\label{pencil dim 3}
 Let \(\sigma\) be a pencil of quadrics in \(\p 3\). Then the following holds.
 \begin{enumerate}[label=\roman*)]
     \item The pencil \(\sigma\) is free if and only if it is locally free. This happens:
     \begin{enumerate}[label=\alph*)]
         \item If \(\sigma\) has Segre symbol \([(1,1),(1,1)]\), in which case \(\calt_\sigma \simeq \calo_{\p 3}(-1)^{\oplus 2}\);
         \item If \(\sigma\) has Segre symbol \([(2,2)]\), in which case \(\calt_\sigma \simeq \calo_{\p 3}(-1)^{\oplus 2}\);
         \item If \(\sigma\) is irregular and incompressible, in which case \(\calt_\sigma \simeq \calo_{\p 3}(-1)^{\oplus 2}\);
         \item If \(\sigma\) is compressible, in which case \(\calt_\sigma \simeq \calo_{\p 3} \oplus \calo_{\p 3}(e-2) \), where \(e \in \{0,1,2\}\) is the total degree of the singular divisors of $\sigma$.
     \end{enumerate}
     \item In all other cases \(\sigma\) is regular, \(\pdim(\calt_\sigma)=1\) and the sheafified minimal graded free resolution of \(\calt_\sigma\) reads:
      \begin{enumerate}[label=\alph*)]
         \item If \(r_0=1\):
         \[
        0 \to \calo_{\p 3}(-3)^{\oplus 2} \to \calo_{\p 3}(-2)^{\oplus 4} \to \calt_\sigma \to 0.
        \]
        \item If \(r_0=2\) and the Segre symbol is not \([(1,1),(1,1)]\) or \([(2,2)]\):
         \[
        0 \to \calo_{\p 3}(-3) \to \calo_{\p 3}(-2)^{\oplus 2} \oplus \calo_{\p 3}(-1) \to \calt_\sigma \to 0.
        \]  
     \end{enumerate}
 \end{enumerate}
\end{theorem}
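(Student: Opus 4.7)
The plan is to partition pencils of quadrics in $\p 3$ into three mutually exclusive classes---compressible, incompressible regular, and incompressible irregular---and in each class either invoke a result from the previous sections or compute $\calt_\sigma$ on an explicit normal form. In every locally free case we will exhibit $\calt_\sigma$ as a sum of line bundles; together with the obvious reverse implication, this gives the equivalence ``free if and only if locally free'' asserted in (i).

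\textbf{Compressible case.} Lemma \ref{split compressible} gives $\calt_\sigma \simeq \op 3^{\oplus m} \oplus \cale$ with $m = h^0(\calt_\sigma) \in \{1,2\}$ and $\rk(\cale) = 2-m$. If $m=2$ then $\cale = 0$ and, after a linear change of variables, $\sigma = (x_0^2, x_1^2)$, giving $\calt_\sigma = \op 3^{\oplus 2}$. If $m=1$ then $\cale$ is a rank-one reflexive sheaf on $\p 3$, hence a line bundle $\op 3(d)$; inspection of $\calj_\sigma$ shows $d = c_1(\calt_\sigma) = -2 + c_1(\calq_\sigma) = e-2$, where $e \in \{0,1\}$ is the number of double hyperplanes appearing in $\sigma$. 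This settles case (d) of (i), and in particular every compressible pencil is free.

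\textbf{Incompressible regular case.} Corollary \ref{regular locally free} restricts the locally free options to Segre symbols $[(1,1),(1,1)]$ and $[(2,2)]$. For each we take a Segre--Weierstrass normal form---for instance $\sigma = (x_0^2+x_1^2,\,x_2^2+x_3^2)$ and $\sigma = (x_0 x_2 + x_1 x_3,\,x_0^2+x_1^2)$ respectively---and exhibit two linearly independent degree-one Jacobian syzygies; through the isomorphism \eqref{iso-syz} these yield a map $\op 3(-1)^{\oplus 2} \to \calt_\sigma$ between reflexive sheaves of equal rank and equal $c_1=-2$, which is forced to be an isomorphism, yielding cases (a) and (b) of (i). For any other regular pencil, Proposition \ref{pdim regular} gives $\pdim(\calt_\sigma)=1$, so the sheafified minimal graded free resolution $0 \to F_1 \to F_0 \to \calt_\sigma \to 0$ has length one; the summands of $F_0$ are pinned down by counting minimal generators of $H^0_*(\calt_\sigma)$ in the smallest relevant degrees, equivalently by counting low-degree Jacobian syzygies via \eqref{iso-syz}, and those of $F_1$ are then forced by the constraints $\rk F_0 - \rk F_1 = 2$ and $c_1(F_0) - c_1(F_1) = -2$. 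An explicit examination of $\calj_\sigma$ on normal forms shows $h^0(\calt_\sigma(1)) = 0$ when $r_0=1$ (the $2 \times 2$ minors of the linear matrix $\calj_\sigma$ have no common factor and the four Koszul-type degree-two syzygies anticipated by the Eagon--Northcott complex generate all sections of $\calt_\sigma(2)$), yielding case (a) of (ii), while for $r_0=2$ the Koszul syzygy attached to the singular line of the maximally degenerate quadric produces the extra degree-one generator responsible for the mixed-degree $F_0$ of case (b).

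\textbf{Incompressible irregular case and main obstacle.} Example \ref{irregular P3} together with Proposition \ref{recover} show that, up to homography, there is a single incompressible irregular pencil in $\p 3$, characterized by $r_1=1$, splitting type $(2,1)$, degree vector $(1)$, and Segre symbol $[1]$; Theorem \ref{extheorem} then yields $\pdim(\calt_\sigma) = n-r_1-2 = 0$, so $\calt_\sigma$ is locally free. Two independent degree-one Jacobian syzygies on an explicit representative, for instance $\sigma = (x_0^2+x_1 x_3,\,x_0 x_1+x_1 x_2)$, combined with the same Chern-class argument used in the regular case, identify $\calt_\sigma \simeq \op 3(-1)^{\oplus 2}$, completing case (c) of (i). The most delicate step in the whole argument is the uniform count of minimal generators in low degrees across all Segre symbols with $r_0=2$ in case (b) of (ii); rather than a case-by-case Segre--Weierstrass inspection, the cleanest route uses the stratification of $\Xi_\sigma$ in Corollary \ref{components of Xi} to compute $H^0(\calt_\sigma(a))$ for small $a$ cohomologically through the sequence \eqref{tn-sec} twisted by $\op 3(a)$.
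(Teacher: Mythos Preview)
Your proposal is correct and follows essentially the same three-way partition (compressible, regular incompressible, irregular incompressible) and the same key inputs (Lemma \ref{split compressible}, Corollary \ref{regular locally free}, Proposition \ref{pdim regular}, Example \ref{irregular P3}) as the paper, including the identification of the free cases via explicit degree-one syzygies on normal forms.

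The one genuine methodological difference is in the derivation of the resolution in part (ii)(b). The paper does not count generators cohomologically; instead it observes that Lemma \ref{ii and iii} extends to the borderline case $(n,\varrho)=(3,2)$ by an explicit construction of the map $\calq_\sigma \to \calo_L(1)$, obtains the filtration $0 \to \calr_M(1) \to \calt_\sigma \to \cale \to 0$ with $\calr_M(1)\simeq\calo_{\p3}(-1)$, and then identifies the quotient $\cale$ directly: when the residual scheme $W\subset M$ has length $2$ one gets $\cale \simeq \cali_M(-1)$, whose Koszul resolution $0\to\calo_{\p3}(-3)\to\calo_{\p3}(-2)^{\oplus2}\to\cali_M(-1)\to0$ is spliced with the inclusion of $\calo_{\p3}(-1)$ to produce the stated resolution in one stroke. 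Your route---producing the $\calo_{\p3}(-1)$ summand from the same Koszul syzygy but then determining the remaining generators by computing $H^0(\calt_\sigma(a))$ through the stratification of $\Xi_\sigma$---is valid and yields the same numbers, but it is less structural and, as you note yourself, is the most delicate step of your argument. The paper's approach sidesteps that delicacy entirely by working with the quotient sheaf rather than with section counts.
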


The proof of the theorem is by inspection of the different Segre symbols. It follows from the analysis appearing in the next subsubsections.

\subsection{Regular pencils}\label{subsec:regular pencils}
Let us write the table of possible Segre symbols of regular pencils, together with the description of \(\Xi_\sigma\) arising from the previous sections.

\[
\begin{small}
\begin{array}{c|c||c|c|c|c|c}
\mbox{Segre} & \ell & \Xi_\sigma & r_0 & \mbox{Chern} & \mbox{stable} & \pdim\\
\hline
\hline
 {[1,1,1,1]} & 4 & 
\mbox{4 simple points} & 1 & (-2,3,4) & \mbox{s} & 1\\
\hline
{[2,1,1]} & 3 & \mbox{\begin{tabular}{c}
     double point \&
     2 simple points 
\end{tabular}} & 1 & (-2,3,4) & \mbox{s} & 1\\
\hline
{[2,2]} & 2 & 
    \mbox{2 double points}
 & 1 & (-2,3,4) & \mbox{s} & 1 \\
\hline
{[3,1]} & 2 & \mbox{\begin{tabular}{c}
     triple point \&
     simple point 
\end{tabular}} &  1 & (-2,3,4) & \mbox{s}& 1\\
\hline
{[4]} & 3 &  \mbox{quadruple point} &  1 & (-2,3,4) & \mbox{s} & 1\\
\hline
\hline
{[1^2,1,1]} & 2 & \mbox{\begin{tabular}{c}
     line \&
     2 simple points 
\end{tabular}} & 2 & (-2,2,2) & \mbox{sss} & 1\\
\hline
{[1^2,2]} & 2 & \mbox{\begin{tabular}{c}
     line \&
     double point 
\end{tabular}} & 2 & (-2,2,2) & \mbox{sss} & 1\\
\hline
{[(2,1),1]} & 2 & \mbox{\begin{tabular}{c}
     line \(\Upsilon^{(2)}\) \& \\
     simple point \(\Upsilon^{(1)}\) \& \\
     simple point
\end{tabular}} & 2 & (-2,2,2) & \mbox{sss} & 1\\
\hline
{[(3,1)]} & 1 & \mbox{\begin{tabular}{c}
     line \(\Upsilon^{(2)}\) \& \\
     double point \(\Upsilon^{(1)}\) 
\end{tabular}} & 2 & (-2,2,2) & \mbox{sss} & 1\\
\hline
\hline
{[1^2,1^2]} & 2 & \mbox{\begin{tabular}{c}
     2 disjoint lines 
        \end{tabular}} & 2 & (-2,1,0) & \mbox{free} & 0\\
\hline
{[2^2]} & 1 & \mbox{\begin{tabular}{c}
     double line \(\Upsilon^{(1)}\) \\
\end{tabular}} & 2 & (-2,1,0) & \mbox{free} & 0\\
\hline
\hline
{[1^3,1]} & 2 & \mbox{\begin{tabular}{c}
     plane \& 
     simple point
\end{tabular}} & 3 & (-1,1,1) & \mbox{s} & 1\\
\hline
{[(2,1^2)]} & 1 & \mbox{\begin{tabular}{c}
     plane \(\Upsilon^{(2)}\) \& \\ 
     simple point \(\Upsilon^{(1)}\)
\end{tabular}} & 3 & (-1,1,1) & \mbox{s} & 1\\
\end{array}
\end{small}
\]

In the column labeled \textit{stable}, we wrote \textit{s} or \textit{sss} according to whether \(\calt_\sigma\) is stable or strictly semistable (in the sense of the slope), and \textit{free} when \(\calt_\sigma\) is split. In the description of \(\Xi_\sigma\), we let the subschemes \(\Upsilon^{(k)}\) show up when a primary component of \(\Xi_\sigma\) has a non-trivial filtration as in the proof of Theorem \ref{extheorem}. In the column labelled \textit{Chern} we write the triple \((c_1(\calt_\sigma), c_2(\calt_\sigma) , c_3(\calt_\sigma))\).

\bigskip

Some comments are in order. 
\begin{enumerate}[label=\roman*)]
    \item When \(\sigma\) is free (and regular), we have \(\calt_\sigma \simeq \calo_{\p 3}(-1)^{\oplus 2}\). 
    \item When \(r_0=1\), the sheaf \(\calq_\sigma\) has resolution of type Buchsbaum--Rim that induces a sheafified minimal graded free resolution:
    \[
    0 \to \calo_{\p 3}(-3)^{\oplus 2} \to \calo_{\p 3}(-2)^{\oplus 4} \to \calt_\sigma \to 0.
    \]
    This gives the Chern classes of \(\calt_\sigma\) when \(r_0=1\).
    \item When \(r_0=2\), there are lines \(M,L \subset \p 3\), not necessarily distinct, with \(L \subset \Xi_\sigma\), and a finite length subscheme \(W \subset M\), such that \(\calt_\sigma\) fits into \eqref{4terms}. Note that, since \(r_0=2\), there is a quadric in the pencil, say \(f_2\), which is a rank-2 quadric in the coordinates \(x_0,x_1\), up to homography. So, setting \(L=\VV(x_0,x_1)\) and composing the Jacobian matrix with the projection \(\calo_{\p3}^{\oplus 2}(1) \to \calo_{\p3}(1)\) onto the second factor and with the obvious quotient \(\calo_{\p3}(1) \to \calo_L(1)\) we obtain explicitly the morphism \(\calq_\sigma \to \calo_L(1)\) required to get \eqref{4terms}, so Lemma \ref{ii and iii} holds also for \((n,\varrho)=(3,2)\).
    
    We have \(\calr_M \simeq \calr_L \simeq \calo_{\p 3}(-2)\) and the length of \(W\) is either 2 or 0, according to whether \(\pdim(\calt_\sigma)\) is 1 or 0.
    So \(\calt_\sigma\) is polystable in the free case, otherwise, it is strictly slope semistable and Gieseker-unstable.

    In the latter case, we have \(\cali_{W/M}\simeq \calo_M(-2)\) and the morphism \(\calr_L(1) \to \cali_{W/M}(1)\) of \eqref{4terms} is the natural surjection 
    \(\calo_{\p 3}(-1) \to \calo_M(-1)\). Therefore the term \(\calg_2\) appearing in the sequence \eqref{3terms} fits into:
    \[
    0 \to \calo_{\p 3}(-3) \to   \calo_{\p 3}(-2)^{\oplus 2} \to \calg_2 \to 0.
    \]
    Then we have a sheafified minimal graded free resolution of \(\calt_\sigma\) of the form:
    \[
    0 \to \calo_{\p 3}(-3) \to \calo_{\p 3}(-2)^{\oplus 2} \oplus \calo_{\p 3}(-1) \to \calt_\sigma \to 0.
    \]
    This gives the Chern classes of \(\calt_\sigma\) when \(r_0=2\). 
    \item Stability of \(\calt_\sigma\) for \(r_0=3\) follows from Proposition \ref{double planes}.
    \item One can put two quadrics of \(\sigma\) in normal form. This is done in \cite{FMS}.
\end{enumerate}

\subsection{Irregular pencils}

We gave in Example \ref{irregular P3} the list of numerical invariants of irregular pencils in \(\p 3\). We observed that there is only one irregular incompressible pencil in \(\p 3\).
Note that in any case the number of points in the support of \(\calc_\t\) is at most 3, so irregular pencils have a normal form \((f_1,f_2)\) which is completely determined up to \(\SL_2(\kappa)\)-action as \(\SL_2(\kappa)\) is 3-transitive on \(\p 1\).
Note that we can assume that this support is contained in \(\{(1:0),(0:1),(1:1)\}\). 
 We are going to see that, for irregular pencils of quadrics \(\sigma\) in \(\p 3\), the sheaf \(\calt_\sigma\) is always free.
 For details on this, see Theorem \ref{classification of free}.


\subsubsection{Irregular incompressible pencils} \label{funny case}

The unique irregular incompressible pencil on \(\p 3\) has \(r_1=1\) so according to Proposition \ref{pdim regular} we have \(\calt_\sigma\) locally free.

The splitting type of \(\sigma\) is \((2,1)\) and the regular part of \(\sigma\) vanishes at a single point \(\lambda \in \p 1\) which gives a single quadric of corank \(2\) in the pencil, so \(r_0=2\). This gives a component \(\Xi^{(1)} \subset \Xi_\sigma\) which is a reduced line.
The component \(Y\) of \(\Xi_\sigma\) is a line which meets \(\Xi^{(1)}\) at \(\lambda\).
In the normalized form appearing in the proof of Theorem \ref{extheorem}, we have \(\lambda=(0:1)\) and the matrix \(\rho\) reads:
\[
\left(
\begin{array}{ccc|c}
0 & z_1 & z_2 & 0 \\
z_1 & 0 & 0 & 0 \\
z_2 & 0 & 0 & 0 \\
\hline
0 & 0 & 0 & z_1 
\end{array}
\right).
\]

The associated pencil is \((x_0 x_2, 2x_0 x_1+x_3^2)\) and, up to dividing by 2, the Jacobian matrix reads:
\[
\left(\begin{array}{cccc}
     x_2 & 0 & x_0 & 0 \\
     x_1 & x_0 & 0 & x_3
\end{array}\right)
      \]
The kernel of this matrix is \(\calt_\sigma \simeq \calo_{\p 3}(-1)^{\oplus 2}\), the syzygy map being:
\[
\left(\begin{array}{cccc}
     x_0 & 0 \\
     -x_1 & -x_3 \\
     -x_2 & 0 \\     
     0 & x_0 \\
\end{array}\right).
\]

\subsubsection{Compressible pencils}

Assume \(\sigma\) is compressible. Then the sheaf \(\calt_\sigma\) contains a copy of the trivial sheaf \(\calo_{\p 3}\) which is thus a direct summand of \(\calt_\sigma\). Therefore \(\calt_\sigma / \calo_{\p 3}\) is a reflexive sheaf of rank one and thus isomorphic to \(\calo_{\p 3}(-c_1(\calt_\sigma))\).
For compressible pencils, we get the following.
\begin{enumerate}[label=\roman*)]
    \item If \(\sigma\) has no singular divisor \(\calt_\sigma \simeq \calo_{\p 3} \oplus \calo_{\p 3}(-2) \).
    \item If \(\sigma\) has a singular plane, then \(\calt_\sigma \simeq \calo_{\p 3} \oplus \calo_{\p 3}(-1) \),
    cf. \ref{case 1} of Lemma \ref{c1=0,1}
    \item If \(\sigma\) has a singular divisor of degree 2, then \(\calt_\sigma \simeq \calo_{\p 3} \oplus \calo_{\p 3} \),
    cf. item \ref{case 0} of Lemma \ref{c1=0,1}.
\end{enumerate}


\section{Locally free pencils of quadrics} \label{locally free pencils}

Let us conclude the analysis of freeness and local freeness of pencils of quadrics over an algebraically closed field \(\kappa\) of characteristic different from 2 on $\p n$ with $n \ge 2$. In the next table, the column labeled \(\Sigma\) displays the Segre symbol of the regular part of \(\sigma\).
In the column labeled \textit{exponents} we write the sequence of degrees of the line bundles which are direct summands of \(\calt_\sigma\), for instance, the sequence \((0^{n-3},-1^2)\) means that \(\calt_\sigma \simeq \calo_{\pn}^{\oplus(n-3)} \oplus \calo_{\pn}(-1)^{\oplus 2}\). Recall from Section \ref{webs} that $\hat{n}=n-h^0(\calt_\sigma)$. When \(n = 2\) one should not consider the first three lines.

\begin{theorem} \label{classification of free}
 Let \(n\ge 2\). A pencil of quadrics \(\sigma\) is free if and only if \(\sigma\) is locally free. This happens if and only if, up to homography, \(\sigma=(f_1,f_2)\) is:
\begin{equation} \label{table of free}
 \begin{array}{c|c|c|c|c|c|c|c}
    f_1 & f_2 & \mbox{{\rm exponents}} & \hat n & r_0 & r_1 & (u,v)& \Sigma \\
    \hline
    \hline
    x_2^2+x_3^2 & x_0^2+x_1^2 & (0^{n-3},-1^2) & 3 & n-2 & n-3 & (4,0) & [1^2,1^2] \\
    x_0^2+x_2^2 & x_0x_1 + x_2x_3 & (0^{n-3},-1^2) & 3 & n-2 & n-3 & (4,0) & [2^2] \\
    x_0 x_2 & 2x_0x_1+x_3^2 &(0^{n-3},-1^2)& 3 & n-1 & n-2 & 
    (2,1)  & [1] \\
    \hline
    \hline
    x_0^2+x_2^2 & x_1^2-x_2^2 & 
    (0^{n-2},-2) & 2 & n-1 & n-2 & (3,0) & [1,1,1] \\
    x_0^2+x_2^2 & x_0x_1 & 
    (0^{n-2},-2) & 2 & n-1 & n-2 & (3,0) & [2,1] \\
    x_0x_1 & 2x_0x_2+x_1^2 &
    (0^{n-2},-2) & 2 & n-1 & n-2 & (3,0) & [3] \\    
    \hline
    x_0^2 & 2x_0x_1+x_2^2 & 
    (0^{n-2},-1)& 2 & n & n-2 & (3,0) & [(2,1)] \\
    x_2^2 & x_0^2+x_1^2 & 
    (0^{n-2},-1) & 2 & n & n-2 & (3,0) & [1^2,1] \\
    x_0x_2 & x_0x_1 & (0^{n-2},-1) & 2 & n-1 & n-1 & (1,1) & \emptyset\\
    \hline
    \hline
    x_1^2  & x_0^2 & (0^{n-1}) & 1 & n & n-1 & (2,0) & [1,1] \\
    x_0^2 & x_0 x_1 & (0^{n-1}) & 1 &  n & n-1 & (2,0) & [2]
 \end{array}
\end{equation}
\end{theorem}

\begin{proof}
Let \(\sigma\) be a locally free pencil of quadrics. Following the notation of Section \ref{webs}, set \(m\) for the compressibility of \(\sigma\) and \(\hat n=n-m\). By Lemma \ref{split compressible}, the sheaf \(\calt_\sigma\) decomposes as \(\calo_{\pn}^{\oplus m} \oplus \cale\), thus $\cale$ must be locally free. In addition, the associated incompressible pencil \(\hat \sigma\) is also locally free, since \( \calt_{\hat\sigma}\) coincides with the restriction of $\cale$ to some $\hat n$-dimensional linear space.

If \(\hat \sigma\) is regular, then Corollary \ref{regular locally free} says that \(\hat n \le 2\) or \(\hat n = 3\) and \(\hat \sigma\) has Segre symbol \([1^2,1^2]\) or \([2^2]\).
In the latter case the normal form of the quadrics of \(\hat \sigma\) obtained as in the proof of Theorem \ref{extheorem} is the one displayed in the first two lines of the table in display \eqref{table of free}. Since \(\sigma\) depends only on \(x_0,\ldots,x_{\hat n}\), this is actually the normal form of the quadrics of \(\sigma\).

On the other hand, if \(\hat \sigma\) is irregular, then, since \(\hat \sigma\) is locally free, setting \(\hat r_1\) for the generic rank of \(\hat \sigma\), we must have \(\hat r_1 \ge \hat n-2\) by Corollary \ref{pdim regular}. Combining this with Lemma \ref{3r1} gives \(\hat n+1 \ge 3 \hat n-6\) which implies \(\hat n \le 3\). If \(\hat n=3\), we are in the situation of Subsection \ref{funny case}, and we obtain the third line of the above table.
In all these cases we have seen already that $\sigma$ is free with the desired exponents.

\medskip

It remains to treat the cases \(\hat n \le 2\). Let us assume \(\hat n = 2\). Since \(\hat \sigma\) is incompressible, \(\calt_{\hat \sigma}\) is a reflexive sheaf of rank 1 with determinant equal to \(e-2\) where \(e\) the total degree of the singular divisors of \(\hat \sigma\). Note that \(e \in \{0,1\}\) as \(\hat \sigma\) is incompressible. 

If $\hat \sigma$ is irregular (and incompressible), then $\hat r_1 = \hat c_1=1$ (with obvious notation) so $\hat \sigma$ is completely irregular and we get the third line from the bottom. 

Still with $\hat n=3$, assuming now $\hat \sigma$ regular, we get \(e=0\) if and only if \(r_0=n-1\) and this corresponds to the Segre symbols \([1,1,1]\), \([2,1]\) and \([3]\), while \(e=1\) takes place when \(r_0=n\) and the Segre symbol is \([(2,1)]\), \([1^2,1]\). 

These Segre symbols are associated with unique pencils up to homography since \(\rho_{\hat \sigma}\) defines at most 3 distinct points in support of \(\calc_\t\) and \(\mathrm{PGL_2}(\kappa)\) acts transitively on triplets of points of \(\p1\).

\medskip

Finally, assume \(\hat n = 1\), so that \(\calt_\sigma \simeq \calo_{\pn}\). Then there are only two possible Segre symbols, whose normal forms give the pencils \((x_0^2,x_1^2)\) and \((x_0x_1,x_0^2)\). Note that this is case \ref{case 0} of Lemma \ref{c1=0,1}.
\end{proof}


\section{Locally free pencils of higher degree} \label{locally free higher}

In this section, \(\kappa\) is any field of characteristic 0.
In contrast with the case of pencils of quadrics seen in the previous section, we will now show that there are locally free pencils of higher degree that are not free.

Before stepping into the general case, let us take a look at the case of pencils of cubics in detail.

Over the complex projective space \(\p3\), there are, according to \cite{BW}, two non-normal cubic surfaces up to homography. In the homogeneous variables \((x_0,\ldots,x_3)\), the equations of these surfaces are:
\[
f = {x}_{1}^{3}+{x}_{0}^{2}{x}_{2}+{x}_{1}^{2}{x}_{3}, ~~,~~ 
g = {x}_{1}^{3}+{x}_{0}^{2}{x}_{2}+{x}_{0}{x}_{1}{x}_{3}.
\]
Both surfaces are singular along the line \(L=\VV(x_0,x_1)\). The Jacobian matrix of the pencil of cubics \(\sigma=(f,g)\) reads:
\[ \nabla\sigma = 
\begin{pmatrix}
2x_0x_2 & 3x_1^2+2x_1x_3  & x_0^2 & x_1^2 \\
2x_0x_2+x_1x_3 & 3x_1^2+x_0x_3 & x_0^2 & x_0x_1
\end{pmatrix}.
\]
The sheaf \(\calq_\sigma\) has rank two over \(L\) and admits no zero-dimensional subsheaf. The first part of Lemma \ref{lf no codim 3} implies that $\calt_\sigma$ is locally free.

However, the scheme-theoretic locus where \(\calq_\sigma\) has rank two has an embedded point at \(p=(0:0:1:0)\). In fact, the Jacobian scheme \(\Xi_\sigma\) has 4 primary components \(P_1,\ldots,P_4\) described by the next table.
\[
\begin{tabular}{c|c|c}
    Dimension & degree & radical ideal \\
\hline
\hline
    1 & 5 & \((x_0,x_1)\) \\ 
    1 & 1 & \((x_1,x_3)\) \\ 
    1 & 1 & \((x_0-x_1,x_3)\) \\ 
    0 & 20 & \((x_0,x_1,x_3)\).
\end{tabular}
\]
Note that $(\Xi_\sigma)_{\rm red}$ consists of the union of the 3 lines $\VV(x_0,x_1)$, $\VV(x_1,x_3)$ and $\VV(x_0-x_1,x_3)$; the first one appears with a multiple structure of degree 5.

In this example, \(\calt_\sigma(2)\) has \(c_1(\calt_\sigma(2))=0\)  and \(c_2(\calt_\sigma(2))=1\), \(c_3(\calt_\sigma(2))=0\). Also, we have \(H^0(\calt_\sigma(2))=0\). Therefore \(\calt_\sigma(2)\) is a null correlation bundle.

This example is generalized to degree $k+3$, for any \(k \ge 0\), in our next result.

\begin{theorem}\label{high deg}
For any $k\ge0$, define the pencil $\sigma=(f,g)$ as:
$$ f= x_0x_1^{k+2}+x_2^{k+3}+x_2^{k+2}x_3,~~
g = x_2x_3(x_2^{k+1}-x_1^{k+1}).
$$
Then we have \(\calt_\sigma \simeq N(-k-2)\), where \(N\) is a null correlation bundle.
\end{theorem}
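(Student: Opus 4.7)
The strategy mirrors the analysis of the degree-$3$ case immediately preceding the statement: one verifies that $\calt_\sigma$ is a reflexive sheaf of rank $2$ on $\p 3$, proves that it is locally free, computes its Chern classes after the twist to find $c_1(\calt_\sigma(k+2))=0$ and $c_2(\calt_\sigma(k+2))=1$, and establishes slope-stability. Since every slope-stable rank-$2$ locally free sheaf on $\p 3$ with $c_1=0$, $c_2=1$ is a null correlation bundle, this identifies $\calt_\sigma(k+2)$ with the desired $N$. As a preliminary check, $\sigma$ is a regular sequence: $f$ and $g$ are forms of the same degree $k+3$, and any common factor is $x_0$-free (since $x_0$ does not occur in $g$), hence divides both the coefficient $x_1^{k+2}$ of $x_0$ in $f$ and the $x_0$-free part $x_2^{k+2}(x_2+x_3)$ of $f$; these are coprime, so the common factor is a unit and $\calt_\sigma$ is reflexive of rank $2$.

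The technical heart of the proof is showing that $\calt_\sigma$ is locally free, equivalently, by Lemma \ref{lf no codim 3}(i), that $\calq_\sigma=\coker(\calj_\sigma)$ has no subsheaf of codimension $\ge 3$. The plan is to describe $(\Xi_\sigma)_\redu$ via Lemma \ref{web deg locus} as the union of singular loci of singular members $V(\alpha f+\beta g)$ of the pencil, which for the specific $\sigma$ at hand yields the line $L_1=V(x_1,x_2)$ (the singular locus of $V(f)$), the line $L_2=V(x_2,x_3)$, and the $k+1$ further lines $V(x_3,x_2-\zeta x_1)$ with $\zeta^{k+1}=1$ (arising in the singular locus of $V(g)$, which decomposes as a union of $k+3$ planes), together with finitely many isolated intersection points. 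At each such intersection point one performs a local computation of $\calj_\sigma$ and shows that its cokernel is Cohen--Macaulay of pure codimension $2$ there, so $\calq_\sigma$ has no embedded associated prime. Equivalently, the reflexive rank-$2$ sheaf $\calt_\sigma$ has vanishing third Chern class and is therefore locally free on the whole of $\p 3$.

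Given local freeness, the numerical and stability conditions follow. From \eqref{tn-sec}--\eqref{nq-sec} and the fact that the $2\times 2$ minors of $\calj_\sigma$ have no common factor (for instance $M_{01}=-(k+1)x_1^{2k+2}x_2 x_3$ is divisible by $x_2 x_3$, while $M_{23}$ restricted to $\{x_1=0\}$ equals $(k+3)x_2^{2k+4}$, ruling out any linear common divisor), we obtain $c_1(\calt_\sigma)=-2(k+2)$ and hence $c_1(\calt_\sigma(k+2))=0$. The identity $c_2(\calt_\sigma(k+2))=1$ is then read off from Hirzebruch--Riemann--Roch applied to $\calt_\sigma(t)$ for a convenient value of $t$, using that $c_3(\calt_\sigma)=0$ by local freeness. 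Slope-stability of $\calt_\sigma(k+2)$ reduces to $H^0(\calt_\sigma(k+2))=0$, that is, to the non-existence of a degree-$(k+2)$ syzygy $(\nu_0,\ldots,\nu_3)$ of $\calj_\sigma$: the leading monomials $x_1^{k+2}$ in the $(1,0)$-entry and $x_2^{k+2}$ in the $(1,3)$-entry force $\nu_0$ and $\nu_3$ to lie in specific principal ideals, and a cascading monomial argument using the second row of $\calj_\sigma$ reduces the tuple to zero. Applying the classification of rank-$2$ stable bundles on $\p 3$ with $c_1=0$, $c_2=1$ then yields $\calt_\sigma(k+2)\simeq N$.

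The main obstacle is the local-freeness step: as already observed in the $k=0$ illustration preceding the theorem, the Jacobian scheme $\Xi_\sigma$ may acquire embedded zero-dimensional primary components (in that case a point at $(0{:}0{:}1{:}0)$ of length $20$), and one must verify that they do not lift to subsheaves of $\calq_\sigma$ itself. A uniform treatment for all $k\ge 0$ requires a careful local analysis at each crossing point of the $k+3$ components of $(\Xi_\sigma)_\redu$, whose number grows with $k$; this is precisely what sets this construction apart from the completely split pencils of Section \ref{locally free pencils}.
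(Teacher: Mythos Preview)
Your strategy is genuinely different from the paper's, and the difference matters. The paper does \emph{not} establish local freeness by a local analysis of $\calq_\sigma$ along the components of $\Xi_\sigma$. Instead it exploits the key structural feature of the construction: the divisor $V(g)$ is a free arrangement of $k+3$ planes, so $\calt_g\simeq\calo_{\p3}\oplus\calo_{\p3}(-1)\oplus\calo_{\p3}(-k-1)$ with an explicit syzygy matrix $\phi$. Feeding this into the diagram of Lemma relating $\calt_\sigma$ to $\calt_{f_j}$ (display \eqref{sub tf}), the paper obtains a filtration of $\calm_\sigma$ by ideal sheaves of explicit curves and, via the snake lemma, an exact sequence
\[
0 \to \calo_{\p3}(-k-3) \to \calt_\sigma \to \cali_{E/\p3}(-k-1) \to 0,
\]
where $E$ is computed by an ideal quotient and turns out to be a genus-$(-1)$ double structure on the line $V(x_1,x_2)$. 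This extension is precisely the Serre construction of a null correlation bundle twisted by $-k-2$, so local freeness, the Chern classes, and stability all fall out at once, uniformly in $k$.

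Your route, by contrast, leaves the decisive step open. You correctly identify that the whole argument hinges on showing $\calq_\sigma$ has no codimension-$3$ subsheaf, but you do not carry this out; you only say that ``one performs a local computation'' at each crossing point of the $k+3$ lines in $(\Xi_\sigma)_\redu$ and concede that the number of such checks grows with $k$. That is exactly the difficulty the paper's method sidesteps, and without a uniform mechanism (such as the freeness of $\calt_g$) it is not clear how to package these local verifications for all $k$ simultaneously. Two further points: your description of $(\Xi_\sigma)_\redu$ only accounts for the singular loci of $V(f)$ and $V(g)$, whereas Lemma \ref{web deg locus} requires the singular loci of \emph{all} singular members of the pencil, so you must also argue that no other member contributes; and the computation of $c_2$ ``via Hirzebruch--Riemann--Roch for a convenient value of $t$'' presupposes knowledge of some $h^i(\calt_\sigma(t))$, which is not available before the bundle is identified. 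In short, your outline is reasonable but incomplete at its core, and the paper's filtration-through-$\calt_g$ argument is what makes the proof go through cleanly.
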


\begin{proof}
Set \(f=x_0x_1^{k+2}+x_2^{k+3}+x_2^{k+2}x_3\) and \(g=x_2x_3(x_2^k-x_1^k)\).
Observe that, in the algebraic closure of \(\kappa\), the divisor \(\VV(g)\) is an arrangement of planes consisting of \(k+2\) planes \(H_1,\ldots,H_{k+2}\) passing  through the line \(L=\VV(x_1,x_2)\) together with an extra plane \(H\) not containing \(L\), namely the plane \(\VV(x_3)\). This arrangement is free, more precisely, we have :
\[
\calt_{g} \simeq \calo_{\p 3} \oplus\calo_{\p 3}(-1) \oplus \calo_{\p 3}(-k-1). 
\]

Factoring out the trivial summand \(\calo_{\p 3}\) of \(\calt_g\), we write explicitly the syzygy \(\phi : \calo_{\p 3}(-1) \oplus \calo_{\p 3}(-k-1) \to \calo_{\p 3}^{\oplus 3}\),  independently on whether \(\kappa\) is closed or not:
\[\phi =
\begin{pmatrix}
      {x}_{1}&{x}_{2}^{k+1}\\
      {x}_{2}&{x}_{1}^{k}{x}_{2}\\
      -(k+2)\,{x}_{3}&-{x}_{1}^{k}{x}_{3}\end{pmatrix}.
      \]
      The
the Jacobian matrix \(\nabla\sigma : \calo_{\p 3}^{\oplus 4} \to \calo_{\p 3}^{\oplus 2}(k+2)\) reads:
\[
\nabla\sigma = \begin{pmatrix}
x_1^{k+2} & (k+2)x_0x_1^{k+1} & (k+3)x_2^{k+2}+(k+2)x_2^{k+1}x_3 & x_2^{k+2} \\
0 & (k+1)x_1^kx_2x_3 & x_1^{k+1}x_3-(k+2)x_2^{k+1}x_3 & x_1^{k+1}x_2-x_2^{k+2}
\end{pmatrix}.
\]

Note that this matrix has a vanishing entry at the bottom left corner and that the vanishing of this entry corresponds to the trivial summand of \(\calt_g\).
Therefore, projecting onto the last three factors of \(\calo_{\p 3}^{\oplus 4}\) and onto the second factor of \(\calo_{\p 3}^{\oplus 2}(k+2)\) we get a commutative diagram, which is essentially a particular case of the diagram in display \eqref{sub tf sqc}:
\begin{equation} \label{towards NC}
\begin{split} \xymatrix@-1.5ex{
&  & \calo_{\p 3} \ar[d] \ar[r] &  \calo_{\p 3}(k+2) \ar[d] \\
0 \ar[r] & \calt_\sigma \ar[r] \ar[d] & \calo_{\p 3}^{\oplus 4} \ar[d] \ar^-{\nabla\sigma}[r] & \calo_{\p 3}^{\oplus 2}(k+2) \ar[d] \\
0 \ar[r] & \calo_{\p 3}(-1) \oplus \calo_{\p 3}(-k-1) \ar[r] & \calo_{\p 3}^{\oplus 3} \ar^-{\nabla_{g}}[r] & \calo_{\p 3}(k+2)
} \end{split}
\end{equation}

The top arrow defines a surface \(D \subset \p 3\) of degree \(k+2\), whose equation must sit in the top left corner of \(\nabla\sigma\). In other words, \(D=\VV(x_1^{k+2})\) is the \((k+2)\)-tuple structure over the plane \(\VV(x_1)\).

Also, we observe that the image of \(\nabla_{g}\) is \(\cali_{C/\p 3}(k+2)\), where the curve \(C\) is the scheme-theoretic singular locus of \(\VV(g)\) and is defined by the 3 minors of order 2 of \(\phi\). 
Incidentally, over the algebraic closure of \(\kappa\), the curve \(C\) consists of \(k+2\) reduced lines \(L_1,\ldots,L_{k+2}\), with \(L_i=H \cap H_i\) for all \(i \in \{1,\ldots,k+2\}\), together with a \((k+1)\)-tuple complete intersection structure over \(L\) of degree \((k+1)^2\) defined by \(\VV(x_0^{k}x_1,x_0^{k+1}-(k+2)x_1^{k+1})\).

The rightmost column of diagram \eqref{towards NC} gives a surjection \(\calm_\sigma \to \cali_{C/\p 3}(k+2)\), whose kernel is a torsion-free sheaf of rank 1, isomorphic to \(\cali_{B/\p 3}(k+2)\), where the subscheme \(B \subset D \subset \p3\) is defined in \(\p 3\) by the homogeneous polynomials \(h\) of the form \(h= a_0f_0+\cdots+a_3f_3\) with \(a_i \in R=\kappa[x_0,\ldots,x_3]\) and satisfying \(a_1g_1+a_2g_2+a_3g_3=0\), where we put \(f_i = \partial f/\partial x_i\) and \(g_i = \partial g/\partial x_i\), for \(i \in \{0,1,2,3\}\).
Since \(\phi\) accounts for all relations of the homogeneous ideal of \(C\), the homogeneous ideal of \(B\) is thus generated by \((f_0,f_1\phi_{1,1}+f_2\phi_{2,1}+f_3\phi_{3,1},f_1\phi_{1,2}+f_2\phi_{2,2}+f_3\phi_{3,2})\) and the matrix of these generators is:
\[
\begin{pmatrix}
      x_1^{k+2} \\
      (k+2)x_0x_1^{k+2}+(k+3)x_2^{k+3}\\
      (k+2)x_0x_1^{k+1}
      x_2^{k+1}+(k+3)x_1^{k}x_2^{k+3}+(k+1)x_1^{k}x_2^{k+2}x_3
      \end{pmatrix}.
\]
Therefore, the homogeneous ideal \(I_{B/D}\) of \(B\) in \(D=\VV(x_1^{k+2})\) is :
\begin{equation} \label{ideal of B}
\left(x_2^{k+3},(k+2)x_0x_1^{k+1}x_2^{k+1}+(k+1)x_1^{k}x_2^{k+2}x_3\right).    
\end{equation}
We have an exact sequence:
\[
0 \to \cali_{B/\p 3}(k+2) \to \calm_\sigma \to \cali_{C/\p 3}(k+2) \to 0,
\]
and thus, from the leftmost column of diagram \eqref{towards NC}:
\[
0 \to \calt_\sigma \to \calo_{\p 3}(-1) \oplus \calo_{\p 3}(-k-1) \to \cali_{B/D}(k+2) \to 0.
\]

The morphism \(\calo_{\p 3}(-1) \to \cali_{B/D}(k+2)\) is given by the generator \(x_2^{k+3}\) of the ideal of \(B\) in \(D\) as in \eqref{ideal of B}. It defines a curve section \(A\) of \(D\) of degree \(k+3\) which contains \(B\). 
We get an exact sequence:
\[
0 \to \cali_{E/\p 3}(-k-1) \to \calo_{\p 3}(-1-k) \to \cali_{B/A}(k+2) \to 0,
\]
where the curve \(E \subset \p 3\) is defined by the sequence and is cut in \(D\) as the residual scheme of \(B\) with respect to the complete intersection \(A = \VV(x_1^{k+2},x_2^{k+3})\). 
From the diagram \eqref{towards NC}, using the snake lemma we also get:
\begin{equation} \label{the double line}
0 \to \calo_{\p 3}(-k-3) \to \calt_{\sigma} \to \cali_{E/\p 3}(-1-k) \to 0.
\end{equation}
We compute the equations of \(E\) from \eqref{ideal of B} as \(\left(I_{B/D} : (x_2^{k+3})\right)\) and get:
\[
E = \VV\left(x_1^2,x_1x_2,x_2^2,(k+2)x_0x_1-(k+1)x_2x_3\right).
\]
Therefore, the curve \(E\) is a double structure of arithmetic genus \(-1\) over the line \(L\). We conclude from \eqref{the double line} that \(\calt_\sigma(k+2)\) is a null correlation bundle.
\end{proof}

\begin{remark}
By the previous theorem, for any degree \(k+3\) there is a pencil \(\sigma\) which is locally free but not free. Also, we have \(\gpdim(\calt_\sigma)=2\). More precisely, the sheafified minimal graded free resolution of \(\calt_\sigma\) reads:
\[
0 \to \calo_{\p 3}(-k-5) \to \calo_{\p 3}(-k-4)^{\oplus 4} \to \calo_{\p 3}(-k-3)^{\oplus 5} \to \calt_\sigma \to 0.
\]
This is in contrast with the case of pencils of quadrics, where local freeness is equivalent to freeness and where \(\gpdim(\calt_\sigma) \le n-2\).
\end{remark}


\section{Algebraically independent sequences of length 2 and rational foliations}\label{sec:ratfols}

We complete this paper by looking at arbitrary algebraically independent sequences of length 2 and showing how these are related to rational 1-forms, which we now introduce.
Part of the material about foliations, in particular in relation to $k$-webs with $k \ge 3$, is discussed in Appendix A below.

Let $f_1$ and $f_2$ be homogeneous polynomials in $\kappa[x_0,\dots,x_n]$ with no common factors of degree $d_1+1$ and $d_2+1$, respectively; remark that $\sigma:=(f_1,f_2)$ is an algebraically independent sequence. Let also $a$ and $b$ be relatively prime integers such that $(d_1+1)b=(d_2+1)a$. The 1-form
$$ \omega = af_1\cdot \mathrm{d}f_2 - bf_2\cdot \mathrm{d}f_1 \in H^0(\Omega_{\pn}^1(d_1+d_2+2)) $$
is called a \emph{rational 1-form of type $(d_1+1,d_2+1)$}.

Regarding $\omega$ as an element of $\Hom_{\pn}(\tpn,\opn(d+2))$, we set $\calk_\omega:=\ker(\omega)$. Since $\omega$ vanishes along the complete intersection scheme $C:=\VV(\sigma)$, the image of the morphism $\omega:\tpn\to\opn(d+2)$ is actually contained in the ideal sheaf $\cali_C(d+2)$. Applying the functor $\Hom_{\pn}(\opn(1)^{\oplus n+1},-)$ to the resolution of $\cali_C(d+2)$
$$ 0 \lra \opn \stackrel{\tilde\eta}{\lra} \opn(d_1+1)\oplus\opn(d_2+1) \lra \cali_C(d+2) \lra 0, $$
where $\tilde\eta=((d_1+1)f_1 \hspace{3mm} (d_2+1)f_2)^{\rm t}$, we check that the composed morphism
$$ \opn(1)^{\oplus n+1}\onto\tpn\to\cali_C(d+2) $$
lifts to a unique morphism $\mu:\opn(1)^{\oplus n+1}\to\opn(d_1+1)\oplus\opn(d_2+1)$, since
$$ \Hom_{\pn}(\opn(1)^{\oplus n+1},\opn) = \Ext_{\pn}^1(\opn(1)^{\oplus n+1},\opn) = 0 . $$

Therefore we obtain the commutative diagram:
\begin{equation}\label{diag6}
\begin{split} \xymatrix@-1.5ex{ 
& & 0 \ar[d] & 0 \ar[d] & \\
& & \opn\ar[d]^{\eta}\ar@{=}[r] & \opn \ar[d]^{\tilde\eta} \\
0\ar[r] & \calk_\omega \ar[r]\ar@{=}[d] & \opn(1)^{\oplus n+1} \ar[r]^-{\mu}\ar[d] & \opn(d_1+1)\oplus\opn(d_2+1) \ar[d]  \\
0\ar[r] & \calk_\omega \ar[r] & \tpn \ar[r]^{\omega}\ar[d] & \cali_C(d+2) \ar[d] \\
& & 0 & 0 
} \end{split}
\end{equation}
This proves that $\calk_\omega\simeq\ker(\mu)$. We argue that $\mu=\nabla\sigma$, thus in fact $\calk_\omega\simeq\calt_\sigma(1)$.
Indeed, note that 
$$ \omega = \sum_{i=0}^n ~ (pf_1\partial_if_2 - qf_2\partial_if_1) \cdot dx_i,$$
which means that the entries of the morphism $\alpha:\opn(1)^{\oplus n+1}\to\opn(d+2)$ given by the composition
$$ \opn(1)^{\oplus n+1} \onto \tpn \to \cali_C(d+2) \into \opn(d+2) $$
are precisely $\alpha_i=p\cdot f_1\partial_if_2-q\cdot f_2\partial_if_1$. Since, on the other hand, $\alpha=(-qf_2 \hspace{3mm} pf_1)\circ\mu$, we conclude that $\mu=\nabla\sigma$, as desired.

Conversely, given an algebraically independent $\sigma=(f_1,f_2)$ with $\deg(f_i)=d_i+1$, we follow the proof of Lemma \ref{lem:dist} in Section \ref{sec:sqc-dist} and consider the associated codimension 1 distribution $\sD_\sigma$ as presented in display \eqref{distribution}; in the case at hand, this simplifies to (setting $d=d_1+d_2$)
\begin{equation}\label{diag2'}
0\lra \calt_\sigma(1) \lra \tpn \stackrel{\omega}{\lra}  \cali_{\Gamma_\sigma}(d-l+2) \lra 0,
\end{equation}
where $\Gamma_\sigma\subset\pn$ is a (possibly not pure) 2-codimensional subscheme of $\pn$, and $l=c_1(\calq_\sigma)$; this is precisely the codimension one distribution associated with the (possibly non-saturated) twisted rational 1-form
$$ \omega = (d_1+1) f_1\cdot \mathrm{d}f_2 - (d_2+1)f_2\cdot \mathrm{d}f_1 \in \Hom_{\pn}(\tpn,\cali_\Gamma(d-l+2)) \subset H^0(\Omega^1_{\pn}(d+2)). $$
Moreover, the bottom line of the diagram in display \eqref{diag3} yields the following description for the singular scheme $\Gamma_\sigma$ of $\omega$:
\begin{equation} \label{singsch}
0 \lra \cali_{\Gamma_\sigma}(d-l+1) \lra \cali_{C}(d+1) \lra \calq_\sigma \lra 0.
\end{equation}
In particular, we have that
$$ \deg(\Gamma_\sigma)=\deg(\calq_\sigma)+\deg(C)=\deg(\calq_\sigma)+(d_1+1)(d_2+1). $$

Summarizing, we have established a natural 1-1 correspondence between algebraically independent sequences of length 2 and rational codimension one foliations as follows.

\begin{lemma}\label{rat-split}
There exists a 1-1 correspondence between algebraically independent sequences $\sigma=(f_1,f_2)$ on $R$ and rational codimension one foliations $\sD$ on $\pn$ of type $(\deg(f_1),\deg(f_2))$ such that $\Ts(1)=\calt_\sD$ and $\sing(\sD)=\Gamma_\sigma$.
\end{lemma}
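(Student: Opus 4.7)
The plan is to package the two constructions already sketched in the paragraphs preceding the lemma into a bijection, and then verify compatibility of the tangent sheaves and singular schemes.

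In one direction, starting from a rational 1-form $\omega = af_1\,\mathrm{d}f_2 - bf_2\,\mathrm{d}f_1$ of type $(d_1+1,d_2+1)$ with $(d_1+1)b=(d_2+1)a$ and $f_1,f_2$ coprime, I would associate the regular sequence $\sigma=(f_1,f_2)$. Using the diagram in display \eqref{diag6}, the kernel sheaf $\calk_\omega$ is identified with the kernel of a lifted morphism $\mu:\opn(1)^{\oplus n+1}\to \opn(d_1+1)\oplus\opn(d_2+1)$, the lift being unique by the vanishing of $\Hom$ and $\Ext^1$ from $\opn(1)^{\oplus n+1}$ to $\opn$. The key check is that $\mu=\calj_\sigma$; this is a direct entrywise comparison of the composition $\opn(1)^{\oplus n+1}\onto\tpn\xrightarrow{\omega}\opn(d+2)$ with $(-bf_2,af_1)\circ\calj_\sigma$, using the explicit expansion $\omega=\sum_i(af_1\partial_if_2-bf_2\partial_if_1)\,\mathrm{d}x_i$. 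This identification yields $\calk_\omega\simeq\calt_\sigma(1)$.

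Conversely, given a regular sequence $\sigma=(f_1,f_2)$, Lemma \ref{lem:dist} (or the dedicated specialisation in \eqref{diag2'}) produces the codimension one distribution $\sD_\sigma$ whose tangent sheaf is $\calt_\sigma(1)$ and whose defining map $\tpn\to\cali_{\Gamma_\sigma}(d-l+2)$ corestricts the Euler-shifted rational 1-form $\omega_\sigma=(d_1+1)f_1\,\mathrm{d}f_2-(d_2+1)f_2\,\mathrm{d}f_1$. The coefficients $a=d_2+1$ and $b=d_1+1$ automatically satisfy the compatibility relation $(d_1+1)b=(d_2+1)a$, and the sequence is regular because $f_1,f_2$ are coprime forms of positive degrees. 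The two constructions are mutually inverse: applying the second construction to the $\sigma$ obtained from $\omega$ returns precisely $\omega$ (by the identification $\mu=\calj_\sigma$), and applying the first to $\omega_\sigma$ returns $\sigma$.

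The identification $\sing(\sD_\sigma)=\Gamma_\sigma$ is then read off the exact sequence \eqref{singsch}, which exhibits $\cali_{\Gamma_\sigma}(d-l+1)$ as the kernel of $\cali_C(d+1)\onto\calq_\sigma$, so that the scheme-theoretic zero locus of the saturated 1-form associated to $\sD_\sigma$ is $\Gamma_\sigma$.

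The only real subtlety, and hence the main point of care rather than a genuine obstacle, is tracking the distinction between a rational 1-form regarded as an element of $H^0(\Omega^1_{\pn}(d+2))$ and its saturation in $\Hom_{\pn}(\tpn,\cali_{\Gamma_\sigma}(d-l+2))$: the former depends on the chosen representatives $(f_1,f_2)$ up to rescaling by the factor $l=c_1(\calq_\sigma)$, while the latter is intrinsic to the distribution. Working with the normalisation $(a,b)=(d_2+1,d_1+1)$ (up to $\gcd$) pins down a canonical representative and makes the bijection well defined.
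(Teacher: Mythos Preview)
Your proposal is correct and follows essentially the same approach as the paper: the paper's proof \emph{is} the discussion preceding the lemma statement, and you have accurately organized that discussion into the two mutually inverse constructions via diagram \eqref{diag6} and the specialisation \eqref{diag2'} of Lemma \ref{lem:dist}, together with the identification of the singular scheme through \eqref{singsch}. The only slip is a harmless swap in your normalisation: the paper's convention is $a=d_1+1$, $b=d_2+1$ (reduced by their $\gcd$), not $(a,b)=(d_2+1,d_1+1)$.
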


The previous statement has the following two important applications when $n=3$. First, as an immediate consequence of \cite[Theorem 6.3]{CCJ1}, we obtain the following stability result for logarithmic sheaves associated with algebraically independent sequences of length 2 on $\p3$.

\begin{corollary}\label{cor:stable}
Let $\sigma=(f_1,f_2)$ be an algebraically independent sequence in $\kappa[x_0,x_1,x_2,x_3]$ and let $d_i:=\deg(f_i)-1$; assume that $d_1+d_2>0$ and $c_1(\calq_\sigma)=0$.
\begin{enumerate}
\item If $d_1+d_2$ is even, then
\begin{itemize}
\item if $\deg(\calq_\sigma)<(d_1^2+d_2^2-d_1-d_2-2)/2$, then $\Ts$ is slope-stable;
\item if $\deg(\calq_\sigma)<(d_1^2+d_2^2+d_1+d_2)/2$, then $\Ts$ is slope-semistable;
\end{itemize}
\item If $d_1+d_2$ is odd and $\deg(\calq_\sigma)<(d_1^2+d_2^2-1)/2$,  then $\Ts$ is slope-stable.
\end{enumerate}
In particular, if the Jacobian scheme is 0-dimensional, then $\calt_\sigma$ is slope-stable.
\end{corollary}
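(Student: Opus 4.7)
The plan is to transfer the stability question via Lemma \ref{rat-split} to a statement about the tangent sheaf of a rational codimension one foliation on \(\p 3\), then invoke \cite[Theorem 6.3]{CCJ1}. By Lemma \ref{rat-split}, the regular sequence \(\sigma=(f_1,f_2)\) corresponds to a rational foliation \(\sD\) on \(\p 3\) of type \((d_1+1,d_2+1)\), hence of degree \(d=d_1+d_2\), with \(\calt_\sD \simeq \Ts(1)\) and \(\sing(\sD) = \Gamma_\sigma\). Since slope-(semi)stability is preserved under tensoring with \(\calo_{\p 3}(1)\), it suffices to establish the corresponding (semi)stability property for \(\calt_\sD\).

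The bridge is a Chern-class computation of \(\deg(\Gamma_\sigma)\). Starting from the exact sequence \eqref{singsch} and using the hypothesis \(c_1(\calq_\sigma)=0\), the snake lemma input yields the key identity
\[
\deg(\Gamma_\sigma) = \deg(\calq_\sigma) + (d_1+1)(d_2+1).
\]
Now \cite[Theorem 6.3]{CCJ1} provides explicit sufficient bounds on \(\deg(\sing(\sD))\) (depending on \(d\) and its parity) that guarantee slope-stability, respectively slope-semistability, of \(\calt_\sD\). Substituting the identity above into those bounds translates them directly into the three inequalities on \(\deg(\calq_\sigma)\) stated in the corollary; the algebraic manipulation is routine, with the even-odd split of cases mirroring the one already present in \cite[Theorem 6.3]{CCJ1}.

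For the ``in particular'' claim, assume \(\Xi_\sigma\) is \(0\)-dimensional. Then \(\calq_\sigma\) has finite length, so automatically \(c_1(\calq_\sigma)=0\), and the \(1\)-dimensional part of \(\Gamma_\sigma\) coincides exactly with the smooth complete intersection curve \(C=V(\sigma)\) of degree \((d_1+1)(d_2+1)\), the rest being a \(0\)-dimensional scheme contributing \(\ell(\calq_\sigma)\) to \(\deg(\Gamma_\sigma)\). The point is that the \(1\)-dimensional part of \(\sing(\sD)\) is as small as possible (a smooth complete intersection rather than a scheme with extra components or embedded structure), which places \(\sD\) within the stable range of the criterion in \cite[Theorem 6.3]{CCJ1}.

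The main obstacle I anticipate is the precise numerical alignment between the bounds of \cite[Theorem 6.3]{CCJ1} on \(\deg(\sing(\sD))\) and those of the corollary on \(\deg(\calq_\sigma)\): both sides must be carefully recorded and the arithmetic identity \(\deg(\Gamma_\sigma) = \deg(\calq_\sigma) + (d_1+1)(d_2+1)\) applied consistently. A secondary subtlety lies in the ``in particular'' assertion in very low-degree edge cases, such as \((d_1,d_2)=(0,1)\), where the stated numerical bounds become vacuous and \emph{stricto sensu} slope-stability of \(\Ts\) should be confirmed by the direct cohomological verification \(h^0(\Ts)=0\), reading off from the syzygies of \(\calj_\sigma\).
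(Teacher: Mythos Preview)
Your proposal is correct and follows essentially the same route as the paper: the paper states the corollary as ``an immediate consequence of \cite[Theorem 6.3]{CCJ1}'' via the correspondence of Lemma \ref{rat-split} and the degree identity \(\deg(\Gamma_\sigma)=\deg(\calq_\sigma)+(d_1+1)(d_2+1)\) already recorded just before that lemma, which is exactly what you do. One small inaccuracy: a $0$-dimensional $\Xi_\sigma$ does not force $C=V(\sigma)$ to be \emph{smooth}, only to have isolated singularities; what matters (and what you correctly use) is that the $1$-dimensional part of $\Gamma_\sigma$ is scheme-theoretically $C$, of degree $(d_1+1)(d_2+1)$.
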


We remark that the previous result is not sharp, and it is not hard to find examples of algebraically independent sequences with slope-stable logarithmic sheaves whose degrees do not satisfy the numerical inequalities above. Indeed, if $\sigma$ corresponds to a pencil of quadrics, so that $d_1=d_2=1$, with $\dim\calg_\sigma=0$, then Corollary \ref{cor:stable} only implies that $\Ts$ is slope-semistable; however, as we have seen in Section \ref{subsec:regular pencils}, $\Ts$ is actually slope-stable in this case. Note that the case $d_1=d_2=1$ is the only one for which the right-hand sides of the inequalities are not positive.

In addition, the higher degree pencils provided in Theorem \ref{high deg} yield yet another set of examples showing that the converse of Corollary \ref{cor:stable} does not hold. 

Finally, as a second application, we give a negative answer to a problem posed by Calvo-Andrade, Cerveau, Giraldo, and Lins Neto, see \cite[Problem 2]{CACGLN}. To be precise, these authors asked whether the tangent sheaf of a codimension one foliation on $\p3$ splits as a sum of line bundles whenever it is locally free. Indeed, in light of the proof of Lemma \ref{rat-split}, the pencils presented in Theorem \ref{high deg} provide examples, for each $k\ge0$, of rational foliations of type $(k+3,k+3)$ on $\p3$ whose tangent sheaves are slope-stable locally free sheaves.


\appendix \label{appendix}
\section{Algebraically independent sequences and foliations (by Alan Muniz)}

The aim of this appendix is to elucidate the relation between logarithmic tangent sheaves and foliations. Precisely, we show that the distribution introduced in Subsection \ref{sec:sqc-dist} is integrable and even more, it is algebraically integrable, i.e., its leaves are contained in the fibers of a rational map. In particular, Proposition \ref{prop:log-fol} below generalizes Lemma \ref{rat-split}.

Fix an algebraically independent sequence $\sigma = (f_1, \dots, f_k)$ with $\deg f_i  = d_i + 1$. We assume, without loss of generality, that each $f_i$ is reduced. Define the integers $e_i = \lcm( d_1 + 1, \dots, d_k+1)/(d_i+1)$ for $i \in \{1,\ldots,k\}$.
Then we consider the rational map 
\[
\phi \colon \pn \dashrightarrow \p{k-1} ; \quad x \mapsto \left(f_1^{e_1}(x): \dots : f_k^{e_k}(x)\right),
\]
which is dominant since $\sigma$ is an algebraically independent sequence. The connected components of its fibers define a codimension $k-1$ foliation on $\pn$ that we denote by $\mathcal{F}_\sigma$. 

In terms of sheaves, $\mathcal{F}_\sigma$ is given locally by the vector fields tangent to the fibers of $\phi$. To be precise, on the affine open subset $V_j = \{f_j \neq 0\}$ we have 
\[
\phi = \left(\frac{f_1^{e_1}}{f_j^{e_j}}, \dots,\widehat{\frac{f_j^{e_j}}{f_j^{e_j}}} ,\dots , \frac{f_k^{e_k}}{f_j^{e_j}} \right)
\]
and a vector field $v$ defined on some open subset $U\subset V_j$ is tangent to $\mathcal{F}_\sigma$ if 
\begin{equation}\label{eq:deffol}
    v\left(\frac{f_i^{e_i}}{f_j^{e_j}}\right) = \iota_v d\!\left(\frac{f_i^{e_i}}{f_j^{e_j}}\right) = 0
\end{equation}
for every $i$, here $\iota_v$ denotes the contraction morphism. This is equivalent to $\mathcal{F}_\sigma$ being defined by the homogeneous $(k-1)$-form $\omega$ satisfying 
\begin{equation}\label{eq:omega}
    g\omega = \iota_\epsilon(df_1 \wedge \dots \wedge df_k)
\end{equation}
where $\epsilon = \sum_j x_j\frac{\partial}{\partial x_j}$ is the Euler radial vector field, and $g$ is the greatest common divisor of the coefficients of the differential form on the right-hand side. Indeed, on $V_j$ we have 

\[
g\omega|_{V_j} = \frac{c f_j^{ke_j}}{f_1^{e_1-1}\dots f_k^{e_k-1}} d\!\left(\frac{f_1^{e_1}}{f_j^{e_j}}\right) \wedge \dots \wedge d\!\left( \frac{f_k^{e_k}}{f_j^{e_j}} \right),
\]
where $c\in \Z_{>0}$. If $v\!\left(\frac{f_i^{e_i}}{f_j^{e_j}}\right) = 0$ then $\iota_v \omega =0$. Conversely,
\begin{equation}\label{eq:resform}
    0 = \iota_v \omega = F \sum_{i = 1} (-1)^{i+1}v\!\left(\frac{f_i^{e_i}}{f_j^{e_j}}\right) d\!\left(\frac{f_1^{e_1}}{f_j^{e_j}}\right) \wedge \dots\wedge \widehat{d\!\left(\frac{f_i^{e_i}}{f_j^{e_j}}\right) } \wedge \dots \wedge d\!\left( \frac{f_k^{e_k}}{f_j^{e_j}} \right),
\end{equation}
where $F= \frac{cf_j^{ke_j}}{gf_1^{e_1-1}\dots f_k^{e_k-1}}$. Thus $v\!\left(\frac{f_i^{e_i}}{f_j^{e_j}}\right) = 0$, for every $i$, on $U \setminus \Delta$, where $\Delta$ is the degeneracy locus of the Jacobian matrix of $\phi$. Since we are in characteristic $0$, $U \setminus \Delta$ is nonempty and open, hence $v\!\left(\frac{f_i^{e_i}}{f_j^{e_j}}\right) = 0$ everywhere on $U$. 


\begin{remark}
When $g=1$, a foliation as above is called a \emph{rational foliation}, see \cite{CPV}. This is a generalization of the codimension one rational foliations given by pencils of hypersurfaces. 

We also remark that $g$ may be non-constant. For instance, take $f_1 = x_1x_3-\frac{1}{2}x_1^2$ and $f_2 = x_2x_3^2 -x_0x_1x_3 +\frac{1}{3}x_1^3$. The foliation associated with $(f_1,f_2)$ is the so called \emph{exceptional foliation} on $\p3$, see \cite[Example 6]{CLn}.
\end{remark}

Next we are interested in computing the tangent sheaf $T\mathcal{F}_\sigma$ of $\mathcal{F}_\sigma$, showing that it coincides with $\mathcal{T}_\sigma(1)$ where $\mathcal{T}_\sigma$ is the logarithmic tangent sheaf associated with $\sigma$. In particular, we show that the distributions $\cald_\sigma$ constructed in Subsection \ref{sec:sqc-dist} are actually integrable.

\begin{proposition}\label{prop:log-fol}
Let $\mathcal{T}_\sigma$ be logarithmic tangent sheaf associated with $\sigma$. Then $\mathcal{T}_\sigma(1)$ is isomorphic to the tangent sheaf $T\mathcal{F}_\sigma$ of $\mathcal{F}_\sigma$.  
 \end{proposition}
 
\begin{proof}

Since $\mathcal{T}_\sigma(1)$ and $T\mathcal{F}_\sigma$ are both reflexive, we only need to show they are isomorphic away from a codimension $2$ set. Let $X = \VV(f_1, \dots, f_k)$ be the algebraic set defined by $\sigma$. Then $\pn \setminus X = \bigcup_l V_l$ where $V_l = \{f_l \neq 0\}$, and we only need to show $\mathcal{T}_\sigma(1)|_{V_l} \simeq T\mathcal{F}_\sigma|_{V_l}$ for every $l$.

Let $U = V_l \cap \{x_0 \neq 0\}$ and fix the natural local coordinates by setting $x_0 = 1$. On $\{x_0 \neq 0\}$, hence on $U$, the projection $\opn^{\oplus n+1}(1)|_U \rightarrow \tpn|_U$ is given by 
\[
v = (a_0, \dots, a_n) \mapsto   [v] = \sum_{j=1}^n (a_j -a_0x_j)\frac{\partial}{\partial x_j}.
\]
Denote by $\pi$ the restriction of the projection above to $\mathcal{T}_\sigma(1)|_{U} \subset \opn^{\oplus n+1}(1)|_U$. We will show that $\pi$ establishes the desired isomorphism, on $U$.

It follows that $v \in \mathcal{T}_\sigma(1)|_{U}$ if and only if $[v] = \pi(v)$ satisfies, for every $i \in \{1,\ldots,k\}$,
\[
[v](f_i) \stackrel{\text{def}}{=} \sum_{j=1}^n (a_j -a_0x_j)\frac{\partial f_i}{\partial x_j} = -a_0(d_i+1)f_i,
\]
here $f_i = f_i|_U$ by abuse of notation. In general, we have 
\begin{equation}\label{eq:derv}
    [v]\!\left(\frac{f_i^{e_i}}{f_l^{e_l}}\right)  = \frac{f_i^{e_i}}{f_l^{e_l}}\left( \frac{e_i[v](f_i)}{f_i} - \frac{e_l[v](f_l)}{f_l} \right).
\end{equation}
For $v \in \mathcal{T}_\sigma(1)|_{U}$ we get $[v]\!\left(\frac{f_i^{p_i}}{f_l^{p_l}}\right) = 0$ and, by definition, $[v] \in T\mathcal{F}_\sigma|_{U}$. That is, $\im \pi \subset T\mathcal{F}_\sigma|_{U}$.

Conversely, for $[v] = \sum_{j=1}^n g_j\frac{\partial }{\partial x_j}  \in T\mathcal{F}_\sigma|_{U}$ we have that $[v]\!\left(\frac{f_i^{e_i}}{f_l^{e_l}}\right) = 0$ for every $i$. Then \eqref{eq:derv} implies that there exists (a unique) $g_0 \in \opn|_U$ such that
\[
[v](f_i)  =  -(d_i+1)g_0 f_i, \text{ for every } i= 1, \dots, k.
\]
We have then a well defined local section $ v= (g_0,g_1 + g_0x_1, \dots, g_n+ g_0x_n) \in \mathcal{T}_\sigma(1)|_{U}$. 

Therefore $\pi \colon \mathcal{T}_\sigma(1)|_{U} \rightarrow T\mathcal{F}_\sigma|_{U}$ is an isomorphism. Repeating this construction for other choices of the open set $U$, yields an isomorphism $\mathcal{T}_\sigma(1)|_{\pn\setminus X} \simeq T\mathcal{F}_\sigma|_{\pn \setminus X}$ hence $\mathcal{T}_\sigma(1) \simeq T\mathcal{F}_\sigma$.

\end{proof}


In view of our present discussion, we remark on the behavior of algebraically independent sequences related to the associated foliations. From $\sigma = (f_1, \dots, f_k)$ we can define a $k$-web $(f_1^{e_1}, \dots, f_k^{e_k})$, as in Subsection \ref{webs}. There, it was introduced the notion of compressibility which is equivalent, due to Lemma \ref{compressibility}, to $H^0(\mathcal{T}_\sigma) \neq 0$. On the other hand, $H^0(\mathcal{T}_\sigma) = H^0(T\mathcal{F}(-1))$, due to Proposition \ref{prop:log-fol}, and this is equivalent to $\mathcal{F}_\sigma$ being a linear pullback foliation. Then, alternatively, Lemma \ref{compressibility} follows from the following result.

\begin{lemma}
Let $\mathcal{F}$ be a codimension $k$ foliation on $\pn$. Then $h^0(T\mathcal{F}(-1)) =: r >0$ if and only if there exists $\pi\colon \pn \dashrightarrow \p{n-r}$ linear and a integrable $k$-form $\eta$ on $\p{n-r}$ such that $\omega = \pi^\ast \eta$ defines $\mathcal{F}$.
\end{lemma}

We suspect that this fact is well-known but we include a proof due to the lack of a precise reference.

\begin{proof}
Let $\omega$ be a homogeneous $k$-form defining $\mathcal{F}$. If $\mathcal{F}$ is a linear pullback $\omega = \pi^\ast \eta$, where $\pi\colon \pn \dashrightarrow \p{q}$ is linear. Then up to linear change in coordinates we assume that $\pi(x_0:\dots:x_n) = (x_0:\dots:x_{q})$ so that $\omega$ does not depend on the variables $x_{q+1}, \dots , x_{n}$. Hence the vector fields $\frac{\partial}{\partial x_j}\in H^0(\tpn(-1))$, $j=q+1, \dots, n$, define elements of $H^0(T\mathcal{F}(-1))$.

Conversely, let $v\in H^0(T\mathcal{F}(-1)) \subset H^0(\tpn(-1))$. Then we claim that the integrability condition and $\codim(\sing \mathcal{F}) \geq 2$ together imply that the Lie derivative vanishes:
\[
\mathcal{L}_v \omega = \iota_v d\omega + d(\iota_v\omega) = 0. 
\]
On the other hand, up to the change in coordinates, we may assume that $\{ \frac{\partial}{\partial x_{n-r+1}}, \dots, \frac{\partial}{\partial x_{n}}\}$ is a basis for $H^0(T\mathcal{F}(-1))$. It is straightforward to verify that $\mathcal{L}_v\omega =0$ with $v =\frac{\partial}{\partial x_j}$ if and only if $\omega$ does not depend on the variable $x_{j}$. Therefore $\omega$ is a linear pullback via the map given by $(x_0: \dots: x_n) \mapsto (x_0:\dots : x_{n-r})$.

Now we have to prove our claim. Let $\omega$ be an integrable homogeneous $k$-form, i.e., for every constant multivector field $y \in \bigwedge^{k-1}\kappa^{n+1}$ we have
    \[
    (\iota_y\omega)\wedge \omega  = (\iota_y\omega) \wedge d\omega = 0,
    \]
and suppose further that $\omega$ does not vanish in codimension one. Assume that $\iota_{v}\omega = 0$, then $\mathcal{L}_{v}\omega = \iota_{v}d\omega$ and, due to integrability, 
\[
(\iota_y\omega)\wedge \mathcal{L}_{v}\omega = \iota_{v}((\iota_y\omega) \wedge d\omega ) = 0
\]
for every $y$. Now let  $u = u_1\wedge \dots \wedge u_k \in \bigwedge^{k}\kappa^{n+1}$  be such that $\iota_u\omega =: f_u \neq 0$, it must exist since $\omega \neq 0$. Define $y_j = (-1)^{j-1}u_1\wedge \dots \wedge \widehat{u_j} \wedge \dots \wedge u_k $ and the $1$-forms $\eta_j= \iota_{y_j}\omega$ so that $\iota_{u_j}\eta_j = \iota_u \omega = f_u$. Applying de Rham--Saito division lemma \cite{SAITO} over the localization $\kappa[x_0, \dots, x_n]_{f_u}$, we see that there exists a polynomial $g_u$ such that
\[
\mathcal{L}_{v}\omega = \frac{g_u}{f^{r+k-1}_u}\eta_1 \wedge \dots \wedge \eta_k = \frac{g_u}{f^{r}_u} \omega 
\]
for some $r\geq 0$. Since $\codim(\sing \omega) \geq 2$, there exists another  $u'\in \bigwedge^{k}\kappa^{n+1}$, also decomposable, such that $\gcd(f_u,f_{u'}) = 1$. From the above equation, we define a rational function 
\[
G= \frac{g_u}{f^{r}_u} = \frac{g_{u'}}{f^{r'}_{u'}} \in \kappa[x_0, \dots, x_n]_{f_u} \cap \kappa[x_0, \dots, x_n]_{f_{u'}} \subset \kappa(x_0 , \dots, x_n)
\]
It follows from the algebraic Hartogs's Lemma \cite[p. 320]{Vakil} and $\gcd(f_u,f_{u'}) = 1$ that
\[
\kappa[x_0, \dots, x_n]_{f_u} \cap \kappa[x_0, \dots, x_n]_{f_{u'}} = \kappa[x_0, \dots, x_n]
\]
hence $G$ is a polynomial. Finally, computing degrees on both sides of the equation $\mathcal{L}_{v}\omega = G\omega$ shows that $G=0$. 
\end{proof}

\bibliographystyle{amsalpha}

\end{document}